\tikzset{comod/.style={rectangle, minimum width=25pt, minimum height=10pt, draw, inner sep=1pt}}
\newcommand*{\extralabel}[1]{
  \refstepcounter{equation}
  \ltx@label{#1}
}%
\newcommand\vc[1]{\begin{tabular}{@{}l}#1\end{tabular}}
\newcommand\ket[1]{\ensuremath{| #1 \rangle}}
\newcommand\bra[1]{\ensuremath{\langle #1 |}}
\newcommand{\sdag}{{\ensuremath{\scriptscriptstyle{\dag}}}}
\newcommand\ignore[1]{\ignorespacesafterend}
\newtheorem{theorem}{Theorem}
\numberwithin{theorem}{section}
\newtheorem{lemma}[theorem]{Lemma}
\theoremstyle{definition}
\newtheorem{varremark}[theorem]{Remark}
\def\rBell{\ensuremath{\mathrm{Bell}}}
\def\W{\ensuremath{\mathrm{W}}}
\newcommand*\sxto[1]{\xrightarrow{\smash{#1}}}
\newcommand\pstar{{\vphantom{*}}}
\renewcommand\matrix[1]
\newcommand\tinymatrix[1]
\renewcommand\thickspace{\kern2pt} \scriptstyle\begin{smallmatrix} #1 \end{smallmatrix} \hspace{-2pt} \right)}
\def\calign@preamble{%
   &\hfil\strut@
    \setboxz@h{\@lign$\m@th\displaystyle{##}$}%
    \ifmeasuring@\savefieldlength@\fi
    \set@field
    \hfil
    \tabskip\alignsep@
}
\let\cmeasure@\measure@
\patchcmd\cmeasure@{\divide\@tempcntb\tw@}{}{}{}
\patchcmd\cmeasure@{\divide\@tempcntb\tw@}{}{}{}
\patchcmd\cmeasure@{\ifodd\maxfields@
  \global\advance\maxfields@\@ne
  \fi}{}{}{}    
\newenvironment{calign}
{%
  \let\align@preamble\calign@preamble
  \let\measure@\cmeasure@
  \align
}
{%
  \endalign
}  
\newcommand{\C}{\mathbb{C}}
\newcommand\id{\mathrm{id}}
\newcommand{\opname}[1]{\operatorname{#1}}
\newcommand{\catname}[1]{\ensuremath{\boldsymbol{\opname{#1}}}}
\newcommand\cat[1]{{\catname{#1}}}
\renewcommand{\_}[0]{\nobreakdash--\hspace{0pt}}
\renewcommand\matrix[1]
\renewcommand\tinymatrix[1]
\renewcommand\thickspace{\kern2pt} \scriptstyle\begin{smallmatrix} #1 \end{smallmatrix} \hspace{-2pt} \right)}
\newenvironment{tz}
{\,\,\begin{aligned}\begin{tikzpicture}}
{\end{tikzpicture}\end{aligned}\,\,}
\def\fillA{blue!20}
\def\fillB{green!20}
\def\fillC{red!20}
\def\fillD{yellow!30}
\def\fillcomp{\fillA}
\newenvironment{tikzequation}{\begin{equation}\begin{aligned}\begin{tikzpicture}}{\end{tikzpicture}\end{aligned}\end{equation}\ignorespacesafterend}
\tikzset{nomorepostaction/.code={\let\tikz@postactions\pgfutil@empty}}
\tikzset{double arrow scope/.style={every node/.style={font=\scriptsize}, every path/.style={
        double, -new double arrowhead}}}
\newcommand\doubleto[0]{\mathbin{\ensuremath{\begin{tikzpicture}
   \node (A) at (0,0) [inner xsep=1.2pt] {};
   \node (B) at (0.5,0) [inner xsep=1pt] {};
   \draw [double,
    decoration={markings,mark=at position 1 with {\arrow[semithick]{>}}},
    postaction={decorate}, shorten >=1pt]
        (A) to (B);
\end{tikzpicture}}}}
\renewcommand\to[0]{\mathbin{\ensuremath{\begin{tikzpicture}
   \node (A) at (0,0) [inner xsep=1.2pt] {};
   \node (B) at (0.5,0) [inner xsep=1pt] {};
   \draw [decoration={markings,mark=at position 1 with {\arrow[semithick]{>}}},
    postaction={decorate}, shorten >=0.5pt
    ] (A) to (B);
\end{tikzpicture}}}}
\newcommand\xdoubleto[1]{\mathbin{\begin{tikzpicture}[baseline={([yshift=-3pt]
current bounding box.south)}]
    \node (A) at (0,0) [inner xsep=0pt, inner ysep=-1pt, minimum width=0.2cm] {\ensuremath{\scriptstyle #1 \strut}};
    \draw [double,->]
        ([xshift=-2.5pt] A.south west)
        to ([xshift=3pt] A.south east);
\end{tikzpicture}}}
\newcommand\xto[1]{\mathbin{\begin{tikzpicture}[baseline={([yshift=-3pt]
current bounding box.south)}]
    \node (A) at (0,0) [inner xsep=0pt, inner ysep=2pt, minimum width=0.2cm] {$\scriptstyle #1$};
    \draw [decoration={markings,mark=at position 1 with {\arrow[semithick]{>}}},
    postaction={decorate}, shorten >=0pt]
        ([xshift=-2.5pt] A.south west)
        to ([xshift=3pt] A.south east);
\end{tikzpicture}}}
\tikzset{keyvertexcolour/.initial=black}
\tikzset{vertex colour/.style={keyvertexcolour={#1}}}
\newlength\vertexradius
\newlength\innerradius
\def\halfanglesep{24}
\def\tripleanglesep{24}
\def\sideangle{30}
\def\tripleanglesep{40}
        \pgfextractx{\pgf@x}{\pgfpointpolar{90+\halfanglesep}{\innerradius}}
        \pgfextracty{\pgf@y}{\pgfpointpolar{90+\halfanglesep}{\innerradius}}
        \pgfextractx{\pgf@x}{\pgfpointpolar{90-\halfanglesep}{\innerradius}}
        \pgfextracty{\pgf@y}{\pgfpointpolar{90-\halfanglesep}{\innerradius}}
        \pgfextractx{\pgf@x}{\pgfpointpolar{90}{\innerradius}}
        \pgfextracty{\pgf@y}{\pgfpointpolar{90}{\innerradius}}
        \pgfextractx{\pgf@x}{\pgfpointpolar{\sideangle+\halfanglesep}{\innerradius}}
        \pgfextracty{\pgf@y}{\pgfpointpolar{\sideangle+\halfanglesep}{\innerradius}}
        \pgfextractx{\pgf@x}{\pgfpointpolar{\sideangle-\halfanglesep}{\innerradius}}
        \pgfextracty{\pgf@y}{\pgfpointpolar{\sideangle-\halfanglesep}{\innerradius}}
        \pgfextractx{\pgf@x}{\pgfpointpolar{\sideangle}{\innerradius}}
        \pgfextracty{\pgf@y}{\pgfpointpolar{\sideangle}{\innerradius}}
        \pgfextractx{\pgf@x}{\pgfpointpolar{-\sideangle+\halfanglesep}{\innerradius}}
        \pgfextracty{\pgf@y}{\pgfpointpolar{-\sideangle+\halfanglesep}{\innerradius}}
        \pgfextractx{\pgf@x}{\pgfpointpolar{-\sideangle-\halfanglesep}{\innerradius}}
        \pgfextracty{\pgf@y}{\pgfpointpolar{-\sideangle-\halfanglesep}{\innerradius}}
        \pgfextractx{\pgf@x}{\pgfpointpolar{-90-\halfanglesep}{\innerradius}}
        \pgfextracty{\pgf@y}{\pgfpointpolar{-90-\halfanglesep}{\innerradius}}
        \pgfextractx{\pgf@x}{\pgfpointpolar{-90+\halfanglesep}{\innerradius}}
        \pgfextracty{\pgf@y}{\pgfpointpolar{-90+\halfanglesep}{\innerradius}}
        \pgfextractx{\pgf@x}{\pgfpointpolar{-90-\tripleanglesep}{\innerradius}}
        \pgfextracty{\pgf@y}{\pgfpointpolar{-90-\tripleanglesep}{\innerradius}}
        \pgfextractx{\pgf@x}{\pgfpointpolar{-90}{\innerradius}}
        \pgfextracty{\pgf@y}{\pgfpointpolar{-90}{\innerradius}}
        \pgfextractx{\pgf@x}{\pgfpointpolar{-90+\tripleanglesep}{\innerradius}}
        \pgfextracty{\pgf@y}{\pgfpointpolar{-90+\tripleanglesep}{\innerradius}}
        \pgfextractx{\pgf@x}{\pgfpointpolar{-90}{\innerradius}}
        \pgfextracty{\pgf@y}{\pgfpointpolar{-90}{\innerradius}}
        \pgfextractx{\pgf@x}{\pgfpointpolar{180+\sideangle+\halfanglesep}{\innerradius}}
        \pgfextracty{\pgf@y}{\pgfpointpolar{180+\sideangle+\halfanglesep}{\innerradius}}
        \pgfextractx{\pgf@x}{\pgfpointpolar{180+\sideangle-\halfanglesep}{\innerradius}}
        \pgfextracty{\pgf@y}{\pgfpointpolar{180+\sideangle-\halfanglesep}{\innerradius}}
        \pgfextractx{\pgf@x}{\pgfpointpolar{180+\sideangle}{\innerradius}}
        \pgfextracty{\pgf@y}{\pgfpointpolar{180+\sideangle}{\innerradius}}
        \pgfextractx{\pgf@x}{\pgfpointpolar{180-\sideangle+\halfanglesep}{\innerradius}}
        \pgfextracty{\pgf@y}{\pgfpointpolar{180-\sideangle+\halfanglesep}{\innerradius}}
        \pgfextractx{\pgf@x}{\pgfpointpolar{180-\sideangle-\halfanglesep}{\innerradius}}
        \pgfextracty{\pgf@y}{\pgfpointpolar{180-\sideangle-\halfanglesep}{\innerradius}}
        \pgfextractx{\pgf@x}{\pgfpointpolar{180-\sideangle}{\innerradius}}
        \pgfextracty{\pgf@y}{\pgfpointpolar{180-\sideangle}{\innerradius}}
\def\nwangle{180-\sideangle}
\def\neangle{\sideangle}
\def\swangle{180+\sideangle}
\def\seangle{-\sideangle}
          \global\booltrue{partpath}
          \global\booltrue{partpath}
\def\expandafter\tikz@node@finish\expandafter{\expandafter\endgroup\expandafter\endpgfonlayer\tikz@node@finish}%
\def\myfill{red!40}
\newlength\triplesep
\newlength\triplelinewidth
\tikzset{triple/.style={line width=\triplelinewidth,black,
    preaction={
        preaction={draw,line width=2\triplesep+3\triplelinewidth,black},
        draw,line width=2\triplesep+\triplelinewidth,white}
    }
}
\definecolor{green}{rgb}{0.2,1,0.2}
\title{Higher Quantum Theory}
\author{Jamie Vicary
\\
\texttt{jamie.vicary@cs.ox.ac.uk}
\\[10pt]
\begin{tabular}{c}
Centre for Quantum Technologies, University of Singapore
\\
and Department of Computer Science, University of Oxford
\\[0pt]
\end{tabular}}
\date{July 19, 2012}
\begin{document}

\maketitle
\abstract{We propose a 2-categorical formalism for describing classical information, quantum systems, and their interactions, based on the principle that classical information can be encoded as correlations between quantum systems. Applying this in the 2\-category of 2\_Hilbert spaces recovers ordinary quantum theory. The formalism gives a simple, graphical way to describe the specification and implementation of certain quantum procedures, which we use to investigate  quantum teleportation, dense coding, complementarity and quantum erasure, verifying our results computationally using a software package.}

\section{Introduction}

\subsection{Overview}

\subsubsection*{The formalism}

This paper presents a new formalism for describing flows of quantum and classical information. The key physical insight, which is by no means new, is that classical information can be encoded in correlations between quantum systems. The key mathematical insight is that the theory of symmetric monoidal 2\-categories serves as an elegant and powerful language for describing and manipulating these correlations.

The formalism introduces a family of fundamental operations, each of which carries a different physical interpretation. These operations are defined by diagrams, in which \textit{regions} represent stores of classical information, \textit{lines} represent physical systems, and \textit{vertices} represent dynamics. We list them here, along with their interpretations:
\def\aascale{1.4}
\def\aaspace{\hspace{30pt}}
\setlength\fboxsep{0pt}
\def\sep{5pt}
\def\innersep{3pt}
\def\littlegap{12pt}
\def\boxmargin{0.15cm}
\newcommand{\centerdia}[1]{#1}
\newcommand\newtwocell[2]{\begin{aligned}
\begin{tikzpicture}[scale=\aascale]
    #1
    \draw [black!20]
        ([xshift=-\boxmargin, yshift=-\boxmargin] current bounding box.south west)
        rectangle
        ([xshift=\boxmargin, yshift=\boxmargin] current bounding box.north east);
\end{tikzpicture}
\end{aligned}
\hspace{10pt} \makebox[80pt][l]{\vc{#2}}}
\newcommand\separatetwocells{\\[\sep]}
\allowdisplaybreaks[2]
\begin{calign}
\nonumber
\newtwocell{
    \draw [white] (0.2,1) to (1.8,1);
    \node (v) [Vertex] at (1,1) {};
    \draw [fill=\fillA, thick] (0.5,1.5)
        to [out=down, in=\nwangle] (v.center)
        to [out=\neangle, in=down] (1.5,1.5);
    \draw [thick] (1,0.5) to (1,1);
}
{Perform a\\measurement}
\hspace{\littlegap}&\hspace{\littlegap}
\newtwocell{
    \draw [white] (0.2,-1) to (1.8,-1);
    \node (v) [Vertex] at (1,-1) {};
    \draw [fill=\fillA, thick] (0.5,-1.5)
        to [out=up, in=\swangle] (v.center)
        to [out=\seangle, in=up] (1.5,-1.5);
    \draw [thick] (1,-0.5) to (1,-1);
}
{Encode classical\\information}
\separatetwocells
\nonumber
\newtwocell{
    \draw [fill=\fillA, draw=none] (0.2,-0.5)
        to (0.5,-0.5)
        to [out=down, in=down, looseness=1.5] (1.5, -0.5)
        to (1.8,-0.5)
        to (1.8,-1.5)
        to (0.2,-1.5)
        to (0.2,-0.5);
    \draw [thick] (0.5,-0.5)
        to [out=down, in=down, looseness=1.5] (1.5,-0.5);
}
{Copy classical\\information}
\hspace{\littlegap}&\hspace{\littlegap}
\newtwocell{
    \draw [fill=\fillA, draw=none] (0.2,0.5)
        to (0.5,0.5)
        to [out=up, in=up, looseness=1.5] (1.5,0.5)
        to (1.8,0.5)
        to (1.8,1.5)
        to (0.2,1.5)
        to (0.2,0.5);
    \draw [thick] (0.5,0.5)
        to [out=up, in=up, looseness=1.5] (1.5,0.5);
}
{Compare classical\\information}
\separatetwocells
\nonumber
\newtwocell{
    \draw [white] (0.2,1) to (1.8,1);
    \draw [fill=\fillA, thick] (0.5,1.5)
        to [out=down, in=down, looseness=1.5] (1.5,1.5);
    \draw [thick, white] (1,0.5) to (1,1);
}
{Create uniform\\classical information}
\hspace{\littlegap}&\hspace{\littlegap}
\newtwocell{
    \draw [white] (0.2,-1) to (1.8,-1);
    \draw [fill=\fillA, thick] (0.5,-1.5)
        to [out=up, in=up, looseness=1.5] (1.5,-1.5);
    \draw [thick, white] (1,-0.5) to (1,-1);
}
{Delete classical\\information}
\end{calign}
\begin{equation*}
\newtwocell{
    \draw [white] (0.6,1) to (2.2,1);
    \draw [fill=\fillA, draw=none]
        (0.6,0.5)
        rectangle
        (1,1.5);
    \draw [fill=\fillA, draw=none] (1,1.5)
        to [out=down, in=\nwangle] (1.4,1)
        to [out=\swangle, in=up] (1,0.5);
    \draw [thick]
        (1,0.5)
        to [out=up, in=\swangle] (1.4,1)
        to [out=\nwangle, in=down] (1,1.5);
    \draw [thick]
        (1.8,0.5)
        to [out=up, in=\seangle] (1.4,1)
        to [out=\neangle, in=down] (1.8,1.5);
    \node [Vertex] at (1.4,1) {};
}
{Perform a controlled\\operation}
\end{equation*}
In these diagrams, time flows upwards. Lines bounding the same shaded region represent physical systems whose states are correlated, in a way which is governed by the classical information stored in the associated region.

The form of each of these diagrams is consistent with its interpretation. For example, the diagram representing measurement produces a region of classical information. Ultimately, the physical interpretations we assign to these 2\-cells arise from the equations we require them to satisfy, as we examine in Section~\ref{sec:formalism}. These amount to saying that classical information is \emph{topological}.

We make these diagrams rigorous using the theory of \textit{2\-categories}~\cite{b97-inc}, algebraic structures consisting of \textit{0-cells}, \textit{1\-cells} and \textit{2\-cells}. A standard graphical notation allows the 2\-cells to be represented by diagrams of the above form. 

\subsubsection*{Specifying protocols}

The strength of the formalism lies in the ability to combine these building blocks to form compound operations, or `protocols'. Equations between such compound operations can encode specifications for particular quantum tasks, completely independently of any details of the implementation. For example, the well-known \textit{quantum teleportation} protocol~\cite{b93-teleport} consists of the preparation of an entangled state, a measurement on two systems, and a controlled operation; its correct execution results in the perfect transfer of a quantum system, and the production of uncorrelated uniformly-distributed classical data. As a result, making use of the components described above, its specification has the following graphical form:
\allowdisplaybreaks[1]
\begin{align}
\tag{\ref{eq:teleport}}
\begin{aligned}
\begin{tikzpicture}
\node (V) [Vertex] at (1,0) {};
\node (W) [Vertex] at (2.25,1) {};
\draw [fill=\fillA, thick] (0,2)
    to [] (0,1)
    to [out=down, in=\nwangle] (1,0)
    to [out=\neangle, in=\swangle] (W.center)
    to [out=\nwangle, in=down] (1.5,2);
\draw [thick] (0.5,-1) to [out=up, in=down] (V.s1) to (V.s2) to [out=down, in=down] (3,0) to [out=up, in=\seangle] (W.center) to [out=\neangle, in=down] (3,2);
\end{tikzpicture}
\end{aligned}
\qquad&=\hspace{15pt}
\frac{1}{\sqrt{n}}
\,
\begin{aligned}
\begin{tikzpicture}
\draw [fill=\fillA, thick] (0,2)
    to (0,1.5)
    to [out=down, in=down, looseness=1.5] (1.5,1.5)
    to (1.5,2);
\draw [thick] (0.75,-1) to [out=up, in=down] (3,1.5) to (3,2);
\end{tikzpicture}
\end{aligned}
\end{align}
The entangled state is represented on the left-hand side by a U-shaped loop, as described by the work of Abramsky and Coecke~\cite{ac08-cqm, c03-loe}. This diagrammatic equation completely describes the form of the quantum teleportation protocol and its intended effect in a minimal and elegant way.

In a similar way, we can give a graphical specification for \textit{dense coding}:
\begin{align}
\tag{\ref{eq:densecoding}}
\frac{1}{\sqrt{n}}
\,
\begin{aligned}
\begin{tikzpicture}[scale=1, thick]
\draw [use as bounding box, draw=none] (-1.5,0.5) rectangle (2.5,3.5);
\node (m) [Vertex] at (1.75,2.5) {};
\node (u) [Vertex] at (0.5,1.5) {};
\draw [draw=none, fill=\fillA] (-0.25,0.5) to (-0.25,3.5) to (-1.5,3.5) to (-1.5,0.5);
\draw [fill=\fillA] (-0.25,0.5)
    to [out=up, in=\swangle] (u.center)
    to (u.center)
    to [out=\nwangle, in=down] (-0.25,3)
    to (-0.25,3.5);
\draw (m.center)
    to [out=\swangle, in=\neangle] (u.center)
    to [out=\seangle, in=120] (1,1)
    to [out=-60, in=down, looseness=1.5] (2.5,1.5)
    to [out=up, in=\seangle] (m.center);
\draw [fill=\fillA] (1,3.5)
    to [out=down, in=\nwangle] (m.center)
    to [out=\neangle, in=down] (2.5,3.5);
\end{tikzpicture}
\end{aligned}
\qquad&=\qquad
\begin{aligned}
\begin{tikzpicture}[scale=1, thick]
\draw [draw=none, fill=\fillA]
    (0,3)
    to [out=down, in=down, looseness=2]
    (1.25,3)
    to (2.5,3)
    to (2.5,2.5)
    to [out=down, in=up] (0,0.0)
    to (-1.25,0)
    to (-1.25,3)
    to (0,3);
\draw
    (2.5,3)
    to (2.5,2.5)
    to [out=down, in=up] (0,0.0)
    to (0,0);
\draw
    (0,3)
    to [out=down, in=down, looseness=2]
    (1.25,3);
\end{tikzpicture}
\end{aligned}
\end{align}
The left-hand side represents the preparation of a pair of systems in a maximally-entangled state, followed by a controlled operation on the first system dependent on some pre-existing classical data, and then a joint measurement on both systems. On the right-hand side, the initial classical data is simply copied. Successful execution is represented by the equality between the two sides.

Our 2\-categorical approach works well for understanding the high-level structure of these quantum protocols, and in Section 4 we analyze the structure of quantum teleportation, dense coding, complementary observables and quantum erasure. We introduce the new notion of \emph{horizontally invertible map}, and we see how this gives a new mathematical foundation for these quantum phenomena. We also introduce the notion of the \emph{theory} arising from a specification, and use it to show an equivalence between quantum teleportation protocols and dense coding protocols, as well as demonstrating a link between our formalism and wider topics in representation theory.

Our formalism is well-suited for giving specifications of new types of quantum protocol which have not been described in the literature. A key question for such novel protocols is implementability: can they be realized in finite-dimensional quantum physics? Since the value of a diagram is unchanged by topological deformation, implementability is a \textit{topological invariant} of our specifications. At present, however, this invariant is poorly understood. As an example we consider \emph{interlaced teleportation}, a novel protocol for which we give a graphical specification. By an argument of Fong~\cite{f12-it}, this specification cannot be implemented in conventional quantum theory. However, we are unable to give a high-level explanation for this fact. An open question remains: can we identify the specifications that can be implemented in quantum theory by directly topological means?

\subsubsection*{Implementations}

An \textit{implementation} of a specification is a solution to the graphical equation in a particular symmetric monoidal 2\-category. This involves giving a well-typed choice of 0\-cells, 1\-cells and 2\-cells for regions, lines and vertices of the diagrams, which satisfy the axioms of the formalism, and which satisfy the chosen equation when composed in the appropriate way. Whether or not an equation has a solution, and hence which information-theoretic tasks can be implemented, will depend on the choice of 2\-category, just as the existence of solutions to a conventional equation depends on the ring over which we attempt to solve it. In this sense, we can consider this choice as encoding the `theory of physics', or `model of computation', with which we are working.


Conventional finite-dimensional quantum theory is given by taking interpretations in \cat{2Hilb}, the 2\-category of 2\_Hilbert spaces, first described by Baez~\cite{b97-hda2}. A 2\_Hilbert space is a \textit{category} equipped with extra structure, which generalizes the notion of a Hilbert space as a \textit{set} with extra structure. In Section~\ref{sec:2hilb} we give an introduction to 2\_Hilbert spaces, and explain why applying our formalism in \cat{2Hilb} reproduces conventional quantum theory. This 2\-category can be presented in different ways, each giving a different perspective from which to understand our model. In Appendix~\ref{sec:environmental} we describe one such perspective in detail, presenting \cat{2Hilb} in terms of `environments', quantum systems equipped with `environmental maps', and `protected dynamics', giving a derivation of our model with a strong physical flavour and a close relationship to the concept of decoherence.

In its simplest form, the 2\-categorical mathematics on which our model depends can be thought of as finite-dimensional `higher linear algebra'. In contrast to the vector spaces and matrices of complex numbers which form the monoidal category \cat{Hilb} of finite-dimensional Hilbert spaces, the monoidal 2\-category \cat{2Hilb} is formed from `2--vector spaces', `matrices of vector spaces', and `matrices of matrices of complex numbers'~\cite{e06-2vect, r11-2vect}. These fit together in an elegant way which enables categorifications of various elementary notions from ordinary linear algebra to be described, such as algebras~\cite{eno02-ofc} and their modules~\cite{o03-mc}.

To work `by hand' with these structures can be difficult, since the definitions of horizontal composition, vertical composition and tensor product of 2\-cells are combinatorially intricate. Working with a computer algebra package can reduce these difficulties tremendously. One package in particular has been used to good effect by the author~\cite{r11-2vect}, which will soon be made available to the community. We use this in Section~\ref{sec:quantuminformation} to demonstrate explicitly that conventional implementations of teleportation, dense coding, complementarity and quantum erasure satisfy our 2\-dimensional equations. An accompanying \textit{Mathematica} notebook is available~\cite{v12-highernotebook} which contains these calculations. We anticipate that such tools will grow in popularity as the importance of higher linear algebraic techniques becomes more widely appreciated.

\ignore{An important feature of this formalism is that it is \emph{topological}, in the sense that if any two diagrams are homotopic with respect to their fixed boundaries, then they are equal as 2\-cells. From a quantum perspective, we could say that they give rise to the same `flow of quantum information', even though their individual components might be quite different. This is one manifestation of the idea that quantum information is topological, as developed particularly by Abramsky and Coecke~\cite{ac08-cqm, c03-loe}.}

\subsection{Scientific context}

\subsubsection*{Categorical quantum mechanics}

This work builds primarily on the categorical quantum mechanics programme initiated by Abramsky and Coecke~\cite{ac04-csqp,ac08-cqm}, which makes use of symmetric monoidal categories to present a high-level language for quantum information. Important tools developed as part of that programme continue to play a central role here, including classical structures~\cite{cpv08-dfb}, their modules~\cite{cpp09-cqs, cp06-qmws}, and the categorical description of mutually unbiased bases~\cite{cdkw12-scnl}. The key distinction between that programme and the one presented here is that our diagrams represent \textit{entire protocols}, including any branching due to quantum measurement. The symmetric monoidal category setting of the categorical quantum mechanics programme can be seen as the sector of our formalism in which all classical information is trivial; categorically, this corresponds to working only with the \emph{scalars} of our symmetric monoidal 2\-category.

There has been substantial activity in the past 10 years on the construction of categorical models encoding the interaction of quantum and classical data. One main body of work focuses on taking spaces of mixed states as the fundamental objects, and completely-positive maps as the appropriate notion of dynamical evolution. This makes sense in a context where entanglement with inaccessible environmental degrees of freedom destroys the purity of the state of the local quantum system, and where global dynamics means there is no local unitary evolution. Selinger and others have described an abstract formalization of this situation~\cite{c08-cpwp, ch11-pcp, s07-dccc} in which classical and quantum data sit alongside each other, and Coecke and Perdrix have used this framework to characterize classical data as a quantum system equipped with a coupling to an environmental degree of freedom, which is then explicitly traced out~\cite{cp08-ecc}.

Our model has a different perspective, since the environmental degrees of freedom are never discarded. If the combined environment and local system begin in a pure state, they will remain in one. Classical information is carried by this joint pure state in the form of entanglement between the environment and the local system. In the quantum information community, these different perspectives are sometimes informally described as the Churches of the Smaller and Larger Hilbert Spaces, the proponents of which differ (theologically at times) in their attitude towards mixed states on local Hilbert spaces: as an appropriate description of the physical system, or as the restriction of some pure state on a global Hilbert space which ought to be modelled directly. Our formalism is naturally associated with the latter camp, giving concrete mathematical structure to classical information in coherent-state quantum theory. It may serve as a useful formal model for areas of research in quantum theory for which full coherence of the quantum state is maintained, such as decoherence~\cite{s05-decoherence} and the many-worlds interpretation~\cite{d10-afu}.

\subsubsection*{Categorification}

This work is also in the spirit of the \emph{categorification} programme~\cite{bd98-cat, cy98-eoc}, named by Crane and Frenkel~\cite{cf94-4d} as an umbrella term for the study of replacing set-based structures with category-based structures. Its relevance for the foundations of physics has been particularly advocated by Baez and Dolan~\cite{b-twf,b97-hda2, b99-hda, bd98-cat, bhw09-hda7, bl11-pnc, bh10-ihgt}, and it has found significant applications in topological quantum field theory~\cite{bd95-hda, cf94-4d, l09-otc} and quantum foundations~\cite{bd01-fsfd, m06-caqm}.

The present work draws from these advances, and extends them to the sphere of quantum information where categorification has not so far been significantly applied. The central role played here by the 2\-category \cat{2Hilb} of 2\_Hilbert spaces, also due to Baez~\cite{b97-hda2}, provides a direct link to much of the modern literature on topological quantum field theory, conformal field theory, quantum groups and higher representation theory, where \cat{2Hilb} is of major importance.

We can give a direct argument for why 2\-category theory should be relevant for quantum theory. Suppose we have a quantum system with a finite-dimensional Hilbert space, seen as an object in the category \cat{Hilb} of finite-dimensional Hilbert spaces. Then if a projector-valued measurement takes place, future dynamics can be different depending on which of the $n$ possible  measurement results occurred. We now effectively have $n$ independent copies of our state space, conveniently described as an object in $\cat{Hilb} ^n$, the $n$-fold Cartesian product of the category of finite-dimensional Hilbert spaces. A categorical setting in which to study the measurement process must therefore include both \cat{Hilb} and $\cat{Hilb}^n$, since these give our mathematical context before and after the measurement takes place; and since they are themselves categories, the correct setting in which to study them will be a 2\-category. The 2\-category \cat{2Hilb} is then a natural choice, since up to equivalence, its objects are precisely the categories of the form $\cat{Hilb}^n$.

\subsection{Acknowledgements}

I would like to thank Samson Abramsky, Mauro D'Ariano, John Baez, Andrew Briggs, Jeremy Butterfield, Bob Coecke, Ross Duncan, Brendan Fong, Chris Heunen and Matthew Pusey for useful comments, and especially Owen Maroney for interesting discussions on the role of decoherence in quantum theory. Early versions of these results were presented to the Quantum Nanoscience working group and the 2012 ONR Quantum Information Science Workshop, both at the University of Oxford, and I am grateful for the useful audience feedback. This work was supported financially by the Centre for Quantum Technologies at the National University of Singapore. Graphics in this article have been prepared using the package \texttt{TikZ}.

This article is written for a diverse audience of Computer Scientists, Mathematicians, Philosophers and Physicists. Comments on the exposition are always welcome.

\section{The formalism}
\label{sec:formalism}

\subsection{Introduction}

Our formalism is based on the theory of \emph{symmetric monoidal 2-categories}~\cite{b97-inc, l98-bb}, in their fully weak form. These are algebraic structures built from \textit{objects}, \textit{1-cells} and \textit{2-cells}, which for us will represent `stores of classical information', `physical systems', and `interactions'. We will make heavy use of the graphical notation for 2\-categories~\cite{l06-faaa} to describe our theory in an accessible way which requires no previous knowledge of 2\-category theory beyond an understanding of the basic definitions.

The formalism is largely independent of any particular theory of physics. While quantum theory directly inspires the formalism, it serves only as one particular \emph{model} of the mathematical structures we develop. Despite this, we will use `quantum intuition' throughout to guide our interpretation of the constructions, often in an informal and imprecise way. This will be made rigorous in Section~\ref{sec:2hilb}, where we will see formally how the familiar quantum notions arise as instances of these constructions.

\subsection{Basic principles}
\label{sec:basicprinciples}

\subsubsection*{Objects}

Given a symmetric monoidal 2\-category, we use the graphical calculus for the underlying 2\-category to represent an object $A$ as a rectangular region of the plane:
\begin{tikzequation}[scale=2]
    \draw [white] (0.2,1) to (1.8,1);
    \begin{pgfonlayer}{foreground}
    \end{pgfonlayer}
    \draw [fill=\fillA, draw=none] (0.2,0.5)
        to (0.2,1.5)
        to (1.8,1.5)
        to (1.8,0.5)
        to (0.2,0.5);
    \node [fill=white, rounded corners, inner sep=2pt] at (1,1) {$A$};
\end{tikzequation}

\noindent
We think of objects as representing stores of classical information. We can imagine them as behaving like sets, whose elements are the possible values the classical information can take. Another useful intuition is to consider them as `environments', which `measure'  the quantum systems put in proximity with them. We have a trivial store of information, represented by the monoidal unit object $I$, and we represent this graphically by the empty region.

\subsubsection*{1-cells}

A  1\-cell $A \sxto S B$ is interpreted as a physical system, with information about its state `stored' in its source object $A$ and target object $B$. We cannot talk in this formalism about a physical system without specifying the objects with which it interacts. We represent a system graphically as a vertical line, which borders the regions corresponding to the source and target objects  on the left and right respectively:
\begin{tikzequation}[scale=2]
    \draw [white] (0.2,1) to (1.8,1);
    \draw [fill=\fillA, draw=none]
        (0.2,0.5)
        rectangle
            node [fill=white, inner sep=2pt, rounded corners] {$A$}
        (1,1.5);
    \draw [fill=\fillB, draw=none]
        (1.8,0.5)
        rectangle
            node [fill=white, inner sep=2pt, rounded corners] {$B$}
        (1,1.5);
    \draw [thick]
        (1,0.5)
            node [below] {$S$}
        to
        (1,1.5);
\end{tikzequation}
Quantum mechanically, we can think of $A$ and $B$ as a commuting pair of observables acting on a Hilbert space $S$, each inducing a decomposition of $S$ into orthogonal subspaces according to the different possible measurement outcomes. Information about $S$ is `stored' in the objects $A$ and $B$, in the sense that selecting a definite value of either observable gives a restriction on the possible states of $S$.

The composition of 1-cells $A \sxto S B$ and $B \sxto T C$ is represented graphically by juxtaposition:
\begin{tikzequation}[scale=2]
    \draw [white] (0.2,1) to (1.8,1);
    \draw [fill=\fillA, draw=none]
        (0.2,0.5)
        rectangle
            node [fill=white, inner sep=2pt, rounded corners] {$A$}
        (1,1.5);
    \draw [fill=\fillB, draw=none]
        (1.8,0.5)
        rectangle
            node [fill=white, inner sep=2pt, rounded corners] {$B$}
        (1,1.5);
    \draw [fill=\fillC, draw=none]
        (2.6,0.5)
        rectangle
            node [fill=white, inner sep=2pt, rounded corners] {$C$}
        (1.8,1.5);
    \draw [thick]
        (1,0.5)
            node [below] {$S$}
        to
        (1,1.5);
    \draw [thick]
        (1.8,0.5)
            node [below] {$T$}
        to
        (1.8,1.5);
\end{tikzequation}
The object $B$ now stores information about both $S$ and $T$. This gives us a restriction on the possible joint states of $S$ and $T$: only those corresponding to the same value of $B$ are allowed. As a result, the states of $S$ and $T$ are correlated. Looking at the values of $A$ and $C$ would give us further independent information about the states of $S$ and $T$ respectively. In the quantum mechanical picture, the state space of the resulting system is not the tensor product of the state spaces for each system, but the largest subspace of it for which the observable $B$ has the same outcome on both subsystems.

\subsubsection*{2-cells}

Dynamical evolution of a physical system is represented by a 2-cell $S \sxto \beta S'$, for physical systems $A \sxto {S,S'} B$. We can think of these as representing physical processes which can take place. We represent this graphically as a vertex:
\begin{tikzequation}[scale=2]
    \draw [white] (0.2,1) to (1.8,1);
    \draw [fill=\fillA, draw=none]
        (0.2,0.5)
        rectangle
            node [fill=white, inner sep=2pt, rounded corners] {$A$}
        (1,1.5);
    \draw [fill=\fillB, draw=none]
        (1.8,0.5)
        rectangle
            node [fill=white, inner sep=2pt, rounded corners] {$B$}
        (1,1.5);
    \draw [thick]
        (1,0.5)
            node [below] {$S$}
        to
        (1,1.5)
            node [above] {$S'$};
    \node [Vertex] at (1,1) {};
    \node at (1.13,1) {$\beta$};
\end{tikzequation}
Such a 2\-cell maps states of $S$ into states of $S'$, in a way that preserves the information stored by $A$ and $B$. Particular families of 2\-cell have natural interpretations as familiar information-theoretical tasks, as we will see below. However, all 2\-cells represent \textit{some} valid physical process in the `theory of physics' modelled by our 2\-category. In the quantum-mechanical interpretation, $\beta$ is a linear map which leaves invariant the outcomes of the observables $A$ and $B$, treating them as conserved quantities. The subspaces corresponding to different outcomes are `superselection sectors', to use the terminology of theoretical physics.

Diagrams representing 2\-cells can be `pasted' together along common edges, forming larger composite 2\-cells. This can be done in two basic ways: horizontally, written algebraically with the connective $\circ$; and vertically, written algebraically with the connective $\cdot$. The algebraic content of a 2\-category lies in the description of when two such composites of the same overall type are equal.

\subsubsection*{Reversing processes}
\label{sec:dagger}
We require our monoidal 2\-category to come with a good notion of the \emph{reversal} of a process. For each 2\-cell $S \sxto \beta T$, we write the reversed process as $T \sxto {\beta ^\dagger} S$. Quantum mechanically, this reversal is given by the adjoint of the linear map that describes the process. We model it formally as an involutive monoidal endofunctor on our symmetric monoidal 2\-category, written $\dag$, which is the identity on objects and 1\-cells, and which preserves all relevant categorical structures. Graphically, this preservation condition says that the reversal acts by flipping diagrams about a horizontal axis:
\begin{equation}
\begin{aligned}
\begin{tikzpicture}[yscale=1, xscale=1.5]
    \begin{pgfonlayer}{foreground}
        \node (f) [draw=black, thick, fill=white, minimum width=20pt, minimum height=9pt, inner sep=0pt] at (1,1) {\tiny$f$};
        \node (g) [draw=black, thick, fill=white, minimum width=20pt, minimum height=9pt, inner sep=0pt] at (1.5,2) {\tiny$g$};
    \end{pgfonlayer}
    \draw [draw=white, fill=\fillA]
        (-0.0,0) to (1.5,0) to (1.5,3) to (-0.0,3);
    \draw [draw=white, fill=\fillC]
        (1,0) to (2.5,0) to (2.5,3) to (1,3);
    \draw [fill=\fillB, draw=white]
        (0.5,3) to (0.5,2)
        to [out=down, in=up] (f.140)
        to (f.40)
        to [out=up, in=down] (g.-140)
        to (g.90)
        to (1.5,3) to (0.5,3);
    \draw [fill=yellow!70, draw=white]
        (1.5,3)
        to (2.5,3)
        to (2.5,0)
        to (2,0) to (2,1)
        to [out=up, in=down] (g.-40)
        to (g.90);
    \draw [thick] (1,0) to (1,1);
    \draw [thick] (f.40) to [out=up, in=down] (g.-140);
    \draw [thick] (f.140)
        to [out=up, in=down] (0.5,2)
        to (0.5,3);
    \draw [thick] (g.90) to (1.5,3);
    \draw [thick] (g.-40) to [out=down, in=up] (2,1) to (2,0);
\end{tikzpicture}
\end{aligned}
\quad
\stackrel{\dagger}{\mapsto}
\quad
\begin{aligned}
\begin{tikzpicture}[yscale=-1, xscale=1.5]
    \begin{pgfonlayer}{foreground}
        \node (f) [draw=black, thick, fill=white, minimum width=20pt, minimum height=9pt, inner sep=0pt] at (1,1) {\tiny$f ^\dag$};
        \node (g) [draw=black, thick, fill=white, minimum width=20pt, minimum height=9pt, inner sep=0pt] at (1.5,2) {\tiny$g ^\dag$};
    \end{pgfonlayer}
    \draw [draw=white, fill=\fillA]
        (-0.0,0) to (1.5,0) to (1.5,3) to (-0.0,3);
    \draw [draw=white, fill=\fillC]
        (1,0) to (2.5,0) to (2.5,3) to (1,3);
    \draw [fill=\fillB, draw=white]
        (0.5,3) to (0.5,2)
        to [out=down, in=up] (f.-140)
        to (f.-40)
        to [out=up, in=down] (g.140)
        to (g.-90)
        to (1.5,3) to (0.5,3);
    \draw [fill=yellow!70, draw=white]
        (1.5,3)
        to (2.5,3)
        to (2.5,0)
        to (2,0) to (2,1)
        to [out=up, in=down] (g.40)
        to (g.90);
    \draw [thick] (1,0) to (1,1);
    \draw [thick] (f.-40) to [out=up, in=down] (g.140);
    \draw [thick] (f.-140)
        to [out=up, in=down] (0.5,2)
        to (0.5,3);
    \draw [thick] (g.-90) to (1.5,3);
    \draw [thick] (g.40) to [out=down, in=up] (2,1) to (2,0);
\end{tikzpicture}
\end{aligned}
\end{equation}
Since we work exclusively using the graphical calculus, this formulation of the preservation condition is sufficient for our purposes.

With respect to a reversal operation $\dag$, we say that a 2\-cell $\mu$ is an \textit{isometry} if $\mu ^\dag \cdot \mu = \id$, and \emph{unitary} if in addition $\mu \cdot \mu^\dag = \id$. This generalizes standard terms from linear algebra.

\subsection{Witnessing classical information}
\label{sec:witness}

\subsubsection*{Introduction}

In quantum theory we can encode a classical bit as the quantum states $\ket 0$ and $\ket 1$ of a qubit. More generally, given any type of classical information taking some set of possible values, we can describe a quantum system whose Hilbert space has a basis indexed by these values. Preparing our system in one of these basis states then gives a \textit{physical witness} encoding the value of the classical information, and many such witnesses can be freely prepared.

Any particular such witness is perfectly correlated with the classical bit. In our notation this gives rise to systems of type $2 \to 1$ or $1 \to 2$, as described in Section~\ref{sec:basicprinciples}. In this section we give a formal axiomatization of the general structure held by these witnesses, which turns out to be overtly topological.

\subsubsection*{Abstract witnesses}

For an object $A$ representing a classical data type, its \textit{witnesses}, if they exist, are 1\-cells of type $A \sxto {A} I$ and $I \sxto {A} A$. By abuse of notation, we give both of these 1\-cells the same name as the object whose data they are encoding. Graphically, we represent our witnesses in the following way:
\begin{calign}
\label{eq:examplewitnesses}
\begin{aligned}
\begin{tikzpicture}
\draw [white] (0,0) to (2,2);
\draw [fill=\fillA, draw=none] (0,0)
    to (1,0)
    to (1,2)
    to (0,2);
\draw [thick] (1,0) to node [auto, swap] {} (1,2);
\end{tikzpicture}
\end{aligned}
&
\begin{aligned}
\begin{tikzpicture}
\draw [white] (0,0) to (-2,2);
\draw [fill=\fillA, draw=none] (0,0)
    to (-1,0)
    to (-1,2)
    to (0,2);
\draw [thick] (-1,0) to node [auto] {} (-1,2);
\end{tikzpicture}
\end{aligned}
\\
A \sxto A I & I \sxto A A
\end{calign}
For $A \sxto {A} I$ and $I \sxto {A} A$ to be good witnesses, we require the following physical operations to be available as 2\-cells, each having a particular interpretation:
\def\aascale{1.3}
\def\aaspace{\hspace{30pt}}
\setlength\fboxsep{0pt}
\def\sep{10pt}
\def\innersep{3pt}
\renewcommand{\centerdia}[1]{\ensuremath{#1}}
\renewcommand\newtwocell[3]{\centerdia{\begin{aligned}
\begin{tikzpicture}[scale=\aascale]
    #1
    \draw [black!20]
        ([xshift=-0.2cm, yshift=-0.2cm] current bounding box.south west)
        rectangle
        ([xshift=0.2cm, yshift=0.2cm] current bounding box.north east);
\end{tikzpicture}
\end{aligned}
& \hspace{15pt} \makebox[170pt][l]{\vc{#2 \\[\innersep] #3}}}}
\renewcommand\separatetwocells{\\[\sep]}
\allowdisplaybreaks[1]
\begin{align}
\newtwocell{
    \draw [white] (0.2,1.5) to (1.8,1.5);
    \draw [fill=\fillA, thick] (0.5,1.5)
        to [out=down, in=down, looseness=1.5] (1.5,1.5);
    \draw [thick, white] (1,0.5) to (1,1);
}
{Create uniform classical data\label{eq:create}}
{}
\separatetwocells
\newtwocell{
    \draw [white] (0.2,-1) to (1.8,-1);
    \draw [fill=\fillA, thick] (0.5,-1.5)
        to [out=up, in=up, looseness=1.5] (1.5,-1.5);
    \draw [thick, white] (1,-0.5) to (1,-1);
}
{Delete classical data\label{eq:forget}}
{}
\separatetwocells
\newtwocell{
    \draw [fill=\fillA, draw=none] (0.2,-0.5)
        to (0.5,-0.5)
        to [out=down, in=down, looseness=1.5] (1.5, -0.5)
        to (1.8,-0.5)
        to (1.8,-1.5)
        to (0.2,-1.5)
        to (0.2,-0.5);
    \draw [thick] (0.5,-0.5)
        to [out=down, in=down, looseness=1.5] (1.5,-0.5);
}
{Copy classical data\label{eq:copy}}
{}
\separatetwocells
\newtwocell{
    \draw [fill=\fillA, draw=none] (0.2,0.5)
        to (0.5,0.5)
        to [out=up, in=up, looseness=1.5] (1.5,0.5)
        to (1.8,0.5)
        to (1.8,1.5)
        to (0.2,1.5)
        to (0.2,0.5);
    \draw [thick] (0.5,0.5)
        to [out=up, in=up, looseness=1.5] (1.5,0.5);
}
{Compare classical data\label{eq:compare}}
{}
\end{align}
Since the graphical notations for~\eqref{eq:create} and~\eqref{eq:forget} are mirror-images of each other about a horizontal axis, this implies that they are related to each other by the \dag\-operation, as described in Section~\ref{sec:dagger}. The same holds for operations~\eqref{eq:copy} and~\eqref{eq:compare}. This encodes the intuition that, for each of these pairs, each member is the `reversal' of the other.

We can describe intuitively the tasks these 2\-cells represent. In~\eqref{eq:create}, instances of systems $A \sxto {A} I$ and $I \sxto {A} A$ are created, in a superposition of every possible state. The states of these new systems are correlated, and also recorded by the classical data associated to the object $A$. Essentially, this models the quantum creation of uniformly random classical information. In~\eqref{eq:forget}, a reverse process is carried out: regardless of which state two perfectly-correlated copies of $S$ are in, those systems are erased. The 2\-cell~\eqref{eq:copy} has the interpretation of copying classical data: given an original single region of classical data, two new copies of the system $S$ are created, each in the given state. And lastly,~\eqref{eq:compare} represents the process of comparing instances of $A \sxto {A} I$ and $I \sxto {A} A$ and ensuring the classical information recorded about them by the object $A$ is the same. This final operation has the possibility of failing, which in a particular model of the formalism would correspond to the presence of a nontrivial kernel.

\subsubsection*{Topological axioms}

To capture these intuitions mathematically, we require that these 2\-cells satisfy some equations, which have a clear topological interpretation. In categorical terminology, the following encode the statement that $I \sxto A A$ and $A \sxto A I$ are \textit{ambidextrous duals}:
\def\aascale{1.0}
\begin{align}
\label{eq:yank1}
\begin{aligned}
\begin{tikzpicture}[scale=\aascale]
\draw [use as bounding box, draw=none] (-0.5,0) rectangle (2.3,2);
\draw [white] (-0.5,0) to (3.1,2);
\draw [fill=\fillA, draw=none] (-0.5,0) to (0.3,0) to (0.3,1)
    to [out=up, in=up, looseness=1.5] (1.3,1)
    to [out=down, in=down, looseness=1.5] (2.3,1)
    to (2.3,2) to (-0.5,2);
\draw [thick] (0.3,0) to (0.3,1)
    to [out=up, in=up, looseness=1.5] (1.3,1)
    to [out=down, in=down, looseness=1.5] (2.3,1)
    to (2.3,2);
\end{tikzpicture}
\end{aligned}
\quad&=\quad
\begin{aligned}
\begin{tikzpicture}[scale=\aascale]
\draw [use as bounding box, draw=none] (1,0) rectangle (0,2);
\draw [white] (0,0) to (2,2);
\draw [fill=\fillA, draw=none] (0,0)
    to (1,0)
    to (1,2)
    to (0,2);
\draw [thick] (1,0) to (1,2);
\end{tikzpicture}
\end{aligned}
\\[5pt]
\begin{aligned}
\begin{tikzpicture}[scale=\aascale]
\draw [use as bounding box, draw=none] (-0.5,0) rectangle (2.3,-2);
\draw [white] (-0.5,0) to (3.1,-2);
\draw [fill=\fillA, draw=none] (-0.5,0) to (0.3,0) to (0.3,-1)
    to [out=down, in=down, looseness=1.5] (1.3,-1)
    to [out=up, in=up, looseness=1.5] (2.3,-1)
    to (2.3,-2) to (-0.5,-2);
\draw [thick] (0.3,0) to (0.3,-1)
    to [out=down, in=down, looseness=1.5] (1.3,-1)
    to [out=up, in=up, looseness=1.5] (2.3,-1)
    to (2.3,-2);
\end{tikzpicture}
\end{aligned}
\quad&=\quad
\begin{aligned}
\begin{tikzpicture}[scale=\aascale]
\draw [use as bounding box, draw=none] (1,0) rectangle (0,2);
\draw [white] (0,0) to (2,2);
\draw [fill=\fillA, draw=none] (0,0)
    to (1,0)
    to (1,2)
    to (0,2);
\draw [thick] (1,0) to (1,2);
\end{tikzpicture}
\end{aligned}
\\[5pt]
\begin{aligned}
\begin{tikzpicture}[scale=\aascale]
\draw [use as bounding box, draw=none] (-2.3,0) rectangle (0.5,2);
\draw [white] (0.5,0) to (-3.1,2);
\draw [fill=\fillA, draw=none] (0.5,0) to (-0.3,0) to (-0.3,1)
    to [out=up, in=up, looseness=1.5] (-1.3,1)
    to [out=down, in=down, looseness=1.5] (-2.3,1)
    to (-2.3,2) to (0.5,2);
\draw [thick] (-0.3,0) to (-0.3,1)
    to [out=up, in=up, looseness=1.5] (-1.3,1)
    to [out=down, in=down, looseness=1.5] (-2.3,1)
    to (-2.3,2);
\end{tikzpicture}
\end{aligned}
\quad&=\quad
\begin{aligned}
\begin{tikzpicture}[scale=\aascale]
\draw [use as bounding box, draw=none] (-1,0) rectangle (0,2);
\draw [white] (0,0) to (-2,2);
\draw [fill=\fillA, draw=none] (0,0)
    to (-1,0)
    to (-1,2)
    to (-0,2);
\draw [thick] (-1,0) to (-1,2);
\end{tikzpicture}
\end{aligned}
\\[5pt]
\label{eq:yank4}
\begin{aligned}
\begin{tikzpicture}[scale=\aascale]
\draw [use as bounding box, draw=none] (-2.3,0) rectangle (0.5,-2);
\draw [white] (0.5,0) to (-3.1,-2);
\draw [fill=\fillA, draw=none] (0.5,0) to (-0.3,0) to (-0.3,-1)
    to [out=down, in=down, looseness=1.5] (-1.3,-1)
    to [out=up, in=up, looseness=1.5] (-2.3,-1)
    to (-2.3,-2) to (0.5,-2);
\draw [thick] (-0.3,0) to (-0.3,-1)
    to [out=down, in=down, looseness=1.5] (-1.3,-1)
    to [out=up, in=up, looseness=1.5] (-2.3,-1)
    to (-2.3,-2);
\end{tikzpicture}
\end{aligned}
\quad&=\quad
\begin{aligned}
\begin{tikzpicture}[scale=\aascale]
\draw [use as bounding box, draw=none] (-1,0) rectangle (0,2);
\draw [white] (0,0) to (-2,2);
\draw [fill=\fillA, draw=none] (0,0)
    to (-1,0)
    to (-1,2)
    to (0,2);
\draw [thick] (-1,0) to (-1,2);
\end{tikzpicture}
\end{aligned}
\end{align}
These axioms say that bends in the graphical notation can be introduced or straightened-out without changing the value of the diagram as a whole, giving the calculus topological properties.

We also impose two further equations which ensure that our copying and comparison operations have good properties. The first encodes the idea that copying followed by comparison should give the identity:
\begin{equation}
\label{eq:copycompare}
\begin{aligned}
\begin{tikzpicture}
\draw [fill=\fillA, draw=none] (0.5,0.5) rectangle (2.5,2.5);
\draw [fill=white, thick] (1,1.5)
    to [out=up, in=up, looseness=1.5] (2,1.5)
    to [out=down, in=down, looseness=1.5] (1,1.5);
\end{tikzpicture}
\end{aligned}
\quad=\quad
\begin{aligned}
\begin{tikzpicture}
\draw [fill=\fillA, draw=none] (0.5,0.5) rectangle (2.5,2.5);
\end{tikzpicture}
\end{aligned}
\end{equation}
The second says that if we copy classical data, swapping the copies makes no difference. Here we make use of the symmetric monoidal 2\-category structure to interpret `crossings' of 1-cells in our graphical calculus.
\begin{equation}
\label{eq:commplanar}
\begin{aligned}
\begin{tikzpicture}[scale=0.8]
\draw [fill=\fillA, draw=none] (0,0)
    to [out=up, in=down, out looseness=1.5] (2,2)
    to (3,2)
    to [out=down, in=up, out looseness=1.5] (1,0);
\draw [fill=\fillA, draw=none] (3,0)
    to [out=up, in=down, out looseness=1.5] (1,2)
    to (0,2)
    to [out=down, in=up, out looseness=1.5] (2,0);
\draw [fill=\fillA, draw=none] (0,2)
    to (1,2)
    to [out=up, in=up, looseness=1.5] (2,2)
    to (3,2)
    to [out=up, in=down] (2,4)
    to (1,4) to [out=down, in=up] (0,2);
\draw [thick] (0,0)
    to [out=up, in=down, out looseness=1.5] (2,2)
    to [out=up, in=up, looseness=1.5] (1,2)
    to [out=down, in=up, in looseness=1.5] (3,0);
\draw [thick] (1,0)
    to [out=up, in=down, in looseness=1.5] (3,2)
    to [out=up, in=down] (2,4);
\draw [thick] (2,0)
    to [out=up, in=down, in looseness=1.5] (0,2)
    to [out=up, in=down] (1,4);
\end{tikzpicture}
\end{aligned}
\quad=\quad
\begin{aligned}
\begin{tikzpicture}[scale=0.8]
\draw [fill=\fillA, draw=none] (0,0)
    to [out=up, in=down, out looseness=1.5] (0,2)
    to (1,2)
    to [out=down, in=up, out looseness=1.5] (1,0);
\draw [fill=\fillA, draw=none] (3,0)
    to [out=up, in=down, out looseness=1.5] (3,2)
    to (2,2)
    to [out=down, in=up, out looseness=1.5] (2,0);
\draw [fill=\fillA, draw=none] (0,2)
    to (1,2)
    to [out=up, in=up, looseness=1.5] (2,2)
    to (3,2)
    to [out=up, in=down] (2,4)
    to (1,4) to [out=down, in=up] (0,2);
\draw [thick] (3,0)
    to [out=up, in=down, in looseness=1.5] (3,2)
    to [out=up, in=down] (2,4);
\draw [thick] (0,0)
    to [out=up, in=down, in looseness=1.5] (0,2)
    to [out=up, in=down] (1,4);
\draw [thick] (2,0) 
    to (2,2)
    to [out=up, in=up, looseness=1.5] (1,2)
    to (1,0);
\end{tikzpicture}
\end{aligned}
\end{equation}
We say that 1\-cells $A \sxto {A} I$ and $I \sxto {A} A$ provide \emph{witnesses} for the object~$A$ when they are equipped with 2\-cells~(\ref{eq:create}\_\ref{eq:compare}) satisfying equations~(\ref{eq:yank1}\_\ref{eq:commplanar}). Aside from the role played by the monoidal unit object $I$, this equation is the only axiom making use of the monoidal structure on our 2\-category.

The axioms \eqref{eq:yank1}\_\eqref{eq:commplanar} can be neatly summarized by saying that for a given pair of witnesses~\eqref{eq:examplewitnesses}, any two surfaces built from the components~\eqref{eq:create}\_\eqref{eq:compare} are equal if and only if they have the same topology, up to genus. For example, the following equation holds:
\[
\begin{aligned}
\begin{tikzpicture}[scale=0.5]
\draw [fill=\fillA, draw=none] (8,1)
        to [out=up,in=down] (8,2) to [out=up,in=down, out looseness=2] (7.5,4) to [out=up,in=down, in looseness=2] (8,6) to (8,8) to (7,8) to (7,5.5) to [out=down,in=down, looseness=1.0] (5,5.5) to [out=up,in=down, out looseness=1.0] (1,8) to (0,8) to [out=down,in=up, out looseness=1.5] (4,5) to [out=down, in=down, looseness=2] (3,5) to [out=up, in=down, in looseness=1.5] (4.5,8) to (3.5,8) to [out=down, in=up, in looseness=1.5] (2,5) to [out=down, in=up, in looseness=1.5] (0,3) to [out=down, in=down, looseness=1.5] (3,3) to [out=up, in=up] (5,3) to [out=down, in=up, in looseness=1.5] (3,1) to (4,1) to [out=up, in=down, in looseness=1.0] (6,2.5) to [out=up, in=up] (7,2.5) to (7,1);
\draw [fill=white, thick] (1,3) to [out=up, in=up, looseness=1.5] (2,3) to [out=down, in=down, looseness=1.5] (1,3);
\draw [fill=white, thick] (5.7,3.8) to [out=up, in=up, looseness=1.5] (6.7,3.8) to [out=down, in=down, looseness=1.5] (5.7,3.8);
\draw [thick] (8,1)
        to [out=up,in=down] (8,2) to [out=up,in=down, out looseness=2] (7.5,4) to [out=up,in=down, in looseness=2] (8,6) to (8,8);
\draw [thick] (7,8) to (7,5.5) to [out=down,in=down] (5,5.5) to [out=up,in=down, out looseness=1.0] (1,8);
\draw [thick] (0,8) to [out=down,in=up, out looseness=1.5] (4,5) to [out=down, in=down, looseness=2] (3,5) to [out=up, in=down, in looseness=1.5] (4.5,8);
\draw [thick] (3.5,8) to [out=down, in=up, in looseness=1.5] (2,5) to [out=down, in=up, in looseness=1.5] (0,3) to [out=down, in=down, looseness=1.5] (3,3) to [out=up, in=up] (5,3) to [out=down, in=up, in looseness=1.5] (3,1);
\draw [thick] (4,1) to [out=up, in=down, in looseness=1.0] (6,2.5) to [out=up, in=up] (7,2.5) to (7,1);
\end{tikzpicture}
\end{aligned}
\,\,\,\,\,\quad=\quad
\begin{aligned}
\begin{tikzpicture}[scale=0.5]
\draw [fill=\fillA, draw=none] (8,0) to [out=up, in=down] (7,3.5) to [out=up, in=down] (8,7) to (7,7) to [out=down, in=down, in looseness=5, out looseness=2] (4.5,7) to (3.5,7) to [out=down, in=down, out looseness=5, in looseness=2] (1,7) to (0,7) to [out=down, in=up, out looseness=0.9, in looseness=0.5] (3.5,0) to (4.5,0) to [out=up, in=up, out looseness=5, in looseness=2] (7,0);
\draw [thick] (8,0) to [out=up, in=down] (7,3.5) to [out=up, in=down] (8,7);
\draw [thick] (7,7) to [out=down, in=down, in looseness=5, out looseness=2] (4.5,7);
\draw [thick] (3.5,7) to [out=down, in=down, out looseness=5, in looseness=2] (1,7);
\draw [thick] (0,7) to [out=down, in=up, out looseness=0.9, in looseness=0.5] (3.5,0);
\draw [thick] (4.5,0) to [out=up, in=up, out looseness=5, in looseness=2] (7,0);
\end{tikzpicture}
\end{aligned}
\]
If the intuitions described above for the 2\-cells~(\ref{eq:create}\_\ref{eq:compare}) are accepted, then every such equation is a meaningful story about the consequences of creating, forgetting, copying and comparing classical data. However, formally, we take this topological property to be the defining consistency condition which makes these intuitions reasonable. Evidence for the reasonableness of these axioms and intuitions is that they match when applied in \cat{2Hilb}, as described in Section~\ref{sec:2hilb}.

\ignore{
\subsubsection*{Extracting copies of witnesses}

Given witnesses $A \sxto {A} I$ and $I \sxto {A} A$ for an object $A$, the composite $I \sxto A A \sxto A I$ to represents a single copy of the underlying physical system which is used to witness $A$, shorn of its correlations. 
}

\subsection{Measurements and controlled operations}

\tikzset{thicker/.style={line width=2pt}}

\subsubsection*{Nondegenerate measurement}

Measurement in our formalism is a dynamical process, in which the result of the measurement is encoded by correlations between physical systems.

Given systems $A \sxto {A} I$ and $I \sxto {A} A$ witnessing the object $A$, a \textit{nondegenerate measurement} is a unitary 2\-cell of the following form:
\def\aascale{1.3}
\begin{align}
\label{eq:measurementdefinition}
\newtwocell{
    \node (V) [Vertex] at (1.5,1) {};
    \draw [fill=\fillA, draw=none]
        (2,1.6)
        to [out=down, in=\neangle] (V.center)
        to [out=\nwangle, in=down] (1,1.6);
    \draw [thicker]
        (1.5,0.4)
        to (1.5,1.0);
    \draw [thick] (1,1.6)
        to [out=down, in=\nwangle] (V.center);
    \draw [thick] (V.center)
        to [out=\neangle, in=down] (2,1.6);
}
{Perform a nondegenerate measurement}
{}
\end{align}
The thick line in the lower part of the picture represents the physical system to be measured, uncorrelated with any classical information. After the measurement we have the composite  $I \sxto A A \sxto {A} I$, representing a pair of systems  which are perfectly correlated with respect to classical data associated to the object~$A$. This classical data indexes the possible outcomes of the measurement, and the two resulting systems are produced in a superposition of correlated states which encode these outcomes.

Invertibility of the measurement 2\-cell~\eqref{eq:measurementdefinition} is enormously important, and befits a notion of measurement in a formalism inspired by decoherence. There is no `collapse of the wavefunction', only the development of correlations between physical systems. In this case, those physical systems are the two correlated systems which form the left- and right-hand boundaries of the shaded region. As we will see, these act as `gateways' to the classical information: they can produce new quantum systems, initialized to carry a copy of the classical data that was obtained from the measurement. The shaded region represents a `future information cone', analogous to a future light cone in special relativity, in which the result of the measurement is available.

The space of isomorphisms of the form~\eqref{eq:measurementdefinition} represents the space of nondegenerate measurements that can be performed on the initial system to yield a measurement outcome valued in $A$. For example, as we explore in detail in Section~\ref{sec:2hilb}, if $A$ represents a classical bit and the initial system is a qubit with state space $\mathbb{C} ^2$, then the space of unitaries of the form~\eqref{eq:measurementdefinition} is precisely the space of orthonormal bases of $\mathbb{C}^2$.  Or it might be that $A$ has the wrong number of degrees of freedom to encode the outcomes of nondegenerate measurements on our system: for example, in quantum theory, we cannot use the 3-element set to index outcomes of a nondegenerate measurement on a qubit. In this case, no isomorphisms of the appropriate type will exist.

Since the measurement 2\-cell~\eqref{eq:measurementdefinition} is required to be unitary, the physical process it represents can be fully inverted. The inverse process has the following graphical representation:
\begin{align}
\newtwocell{
\label{eq:removecorrelations}
    \node (V) [Vertex] at (1.5,-1) {};
    \draw [fill=\fillA, draw=none]
        (2,-1.6)
        to [out=up, in=\seangle] (V.center)
        to [out=\swangle, in=up] (1,-1.6);
    \draw [thicker]
        (1.5,-0.4)
        to (1.5,-1.0);
    \draw [thick] (1,-1.6)
        to [out=up, in=\swangle] (V.center)
        to [out=\seangle, in=up] (2,-1.6);
}
{Encode classical information}
{}
\end{align}
This can be interpreted as the preparation of  a quantum system, in a way which depends on the value of the classical information, such that the classical information can be perfectly recovered from the quantum system. It removes the correlations between the original systems, returning
a single system whose state encodes the original classical information. The classical information could still be directly available; for example, if it had previously been copied by a vertex of the form~\eqref{eq:copy}.

The invertibility equations take the following form:
\begin{calign}
\begin{aligned}
\begin{tikzpicture}[scale=1.5]
    \node (V) [Vertex] at (1.4,-1) {};
    \node (W) [Vertex] at (1.4,0) {};
    \draw [fill=\fillA, draw=none]
        (1.8,-1.5)
        to [out=up, in=\seangle] (V.center)
        to [out=\swangle, in=up] (1,-1.5);
    \draw [thicker]
        (1.4,-0.0)
        to (1.4,-1.0);
    \draw [thick] (1,-1.5)
        to [out=up, in=\swangle] (V.center)
        to [out=\seangle, in=up] (1.8,-1.5);
    \draw [fill=\fillA, draw=none]
        (1.8,0.5)
        to [out=down, in=\neangle] (W.center)
        to [out=\nwangle, in=down] (1,0.5);
    \draw [thick]
        (1.8,0.5)
        to [out=down, in=\neangle] (W.center)
        to [out=\nwangle, in=down] (1,0.5);
\end{tikzpicture}
\end{aligned}
\,\,=\quad
\begin{aligned}
\begin{tikzpicture}[scale=1.5]
\draw [fill=\fillA, draw=none] (0,0) rectangle (0.8,2);
\draw [thick] (0,0) to (0,2);
\draw [thick] (0.8,0) to (0.8,2);
\end{tikzpicture}
\end{aligned}
&
\begin{aligned}
\begin{tikzpicture}[scale=1.5]
    \node (V) [Vertex] at (1.4,1) {};
    \node (W) [Vertex] at (1.4,2) {};
    \draw [fill=\fillA, draw=none]
        (1.8,1.5)
        to [out=down, in=\neangle] (V.center)
        to [out=\nwangle, in=down] (1,1.5)
        to [out=up, in=\swangle] (W.center)
        to [out=\seangle, in=up] (1.8,1.5);
    \draw [thicker] (1.4,0.5) to (1.4,1.0);
    \draw [thicker] (1.4,2.5) to (1.4,2.0);
    \draw [thick] (1,1.5)
        to [out=down, in=\nwangle] (V.center)
        to [out=\neangle, in=down] (1.8,1.5)
        to [out=up, in=\seangle] (W.center)
        to [out=\swangle, in=up] (1,1.5);
\end{tikzpicture}
\end{aligned}
\quad=\quad
\begin{aligned}
\begin{tikzpicture}[scale=1.5]
\draw [white] (0.1,0.5) to (-0.0,0.5);
\draw [thicker] (0,0) to (0,2);
\end{tikzpicture}
\end{aligned}
\end{calign}
The first of these says that if we have some classical information which we encode using some basis in the state of a physical system, measuring that system in the same basis recovers the classical information. The second says that if we measure a physical system to obtain some classical information, and then encode this information back into the state of a physical system, then we have done nothing at all. From the perspective of quantum mechanics, a possible conceptual problem is that the result of the original measurement could be considered as still available in principle, in which case performing these two procedures does indeed have some nontrivial overall effect. Behind this argument is the assumption that correlations recording the measurement outcome rapidly become established with the environment. However, in principle, it is possible to reverse these correlations, and that is what our idealized encoding map~\eqref{eq:removecorrelations} represents. This is completely consistent with a pure-state perspective on quantum measurement, as described in particular by the decoherence programme~\cite{s05-decoherence}.

\subsubsection*{Arbitrary projective measurement}

More generally, we can describe an arbitrary projective measurement as a unitary 2\-cell of the following form:
\begin{align}
\label{eq:arbitrarymeasurement}
\newtwocell{
    \node (V) [Vertex] at (1.5,1) {};
    \draw [fill=\fillA, draw=none]
        (2,1.6)
        to [out=down, in=\neangle] (V.center)
        to [out=\nwangle, in=down] (1,1.6);
    \draw [thicker]
        (1.5,0.4)
        to node [auto, swap] {} (1.5,1.0);
    \draw [thicker] (1,1.6)
        to [out=down, in=\nwangle] node [auto, swap, pos=0.4, inner sep=1pt] {} (V.center);
    \draw [thick] (V.center)
        to [out=\neangle, in=down] (2,1.6);
}
{Perform a projective measurement}
{}
\end{align}
Thicker lines represent represent arbitrary 1\-cells of the correct type, while the thinner line represents an object witnessing classical data as per Section~\ref{sec:witness}. The source of this 2\-cell is   a 1\-cell $I \sxto T I$ representing our original physical system, and the target is some composite $I \sxto {T_A} A \sxto A I$ representing the effect of the measurement. The 1\-cell $I \sxto {T_A} A$ is interpreted  as the original system $T$, modified such that information about the state of the system is stored in the object $A$. Quantum mechanically, this would correspond to the decomposition of $T$ into a family of orthogonal subspaces, indexed by the values of $A$. This system $I \sxto{T_A} A$ is correlated with the system $A \sxto A I$, in a way which is conditional on the value of the classical information associated to the object $A$.

\newcommand\supp{\text{im}}

For a concrete quantum-mechanical model, we can take the object $A$ to be single classical bit with values $\{0,1\}$, the 1\-cell $A \sxto A I$ to be the Hilbert space $\mathbb{C}^2$ with basis $\{ \ket 0 , \ket 1\}$ indexed by the values of $A$, the 1\-cell $I \sxto T I$ to be some Hilbert space, and the 1\-cell $I \sxto{T_A} A$ to be the Hilbert space $T$ equipped with a pair of orthogonal projectors $P_0, P_1:T \to T$ indexed by the values of $A$. The composite $I \sxto {T_A} A \sxto A I$ represents the subspace of $T \otimes \mathbb{C}^2$ spanned by states which are perfectly correlated with respect to the data monitored by the observable $A$, which is the space spanned by the images of the projectors $P_0 \otimes \ket 0 \bra 0$ and $P_1 \otimes \ket 1 \bra 1$. The measurement vertex then represents the linear map
\begin{equation}
\ket\phi \mapsto P_0 \ket \phi \otimes \ket 0 + P_1 \ket \phi \otimes \ket 1,
\end{equation}
where $\ket\phi$ is the original quantum state of the system $T$. This is indeed a unitary as required. A full proof of the equivalence of this notion of measurement with the ordinary one is given in Section~\ref{sec:2hilb}.

\subsubsection*{Controlled operations}

The final fundamental construction that we will consider is a controlled operation. It takes the following form:
\begin{align}
\newtwocell{
\label{eq:controlled}
    \draw [white] (0.7,1) to (2.2,1);
    \draw [fill=\fillA, draw=none]
        (0.5,0.5)
        rectangle
        (1,1.5);
    \draw [fill=\fillA, draw=none] (1,1.5)
        to [out=down, in=\nwangle] (1.4,1)
        to [out=\swangle, in=up] (1,0.5);
    \draw [thick]
        (1,0.5)
        to [out=up, in=\swangle] (1.4,1)
        to [out=\nwangle, in=down] (1,1.5);
    \draw [thicker]
        (1.8,0.5)
        to [out=up, in=\seangle] (1.4,1)
        to [out=\neangle, in=down] (1.8,1.5);
    \node [Vertex] at (1.4,1) {};
}
{Perform a controlled operation}
{}
\end{align}
This is required to satisfy no equations: it is specified only by its type. It acts on a system witnessing classical information on the left-hand side, along with a separate uncorrelated system on the right-hand side.

The system encoding classical information cannot have its stage changed by this process, since it is perfectly correlated with the classical information which forms the context for the operation. However, the state of the uncorrelated system can be modified in an arbitrary fashion, and this can be done in a way which \textit{depends} on the state of the correlated system. This is precisely the definition of a controlled operation: a family of operations to be performed on one system, the choice of which depends on the state of another system.

In quantum theory we are often particularly interested in controlled families of unitary operations. This special case is obtained in our formalism by requiring the controlled operation vertex~\eqref{eq:controlled} to itself be unitary, as defined in Section~\ref{sec:dagger}.

\subsection{Internal algebras and modules}

Our formalism gives rise to algebras and modules in the symmetric monoidal category of scalars of our symmetric monoidal 2\-category. Intuitively, a witness for an object exhibits it as the category of modules for an algebra, a nondegenerate measurement gives rise to particular algebra, and an arbitrary measurement gives rise to a module for an algebra. All these algebras are internal commutative \dag\-Frobenius algebras. This gives a concrete connection to the categorical quantum mechanics research programme~\cite{cp06-qmws, cpv08-dfb} in which special commutative \dag\-Frobenius algebras, there called \textit{classical structures},  play a central role as abstract characterizations of bases for finite-dimensional Hilbert spaces.
\begin{theorem}
\label{thm:classicalstructure}
A witness for an object gives rise to a canonical special commutative \dag\-Frobenius algebra in the symmetric monoidal category of scalars.
\end{theorem}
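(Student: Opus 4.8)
The plan is to realise the composite 1\-cell $G := \big(I \sxto A A \sxto A I\big)$ as a special commutative $\dagger$\-Frobenius algebra object in the category $\mathcal S := \mathcal C(I,I)$ of endo\-1\-cells of the monoidal unit of our symmetric monoidal 2\-category $\mathcal C$, with its 2\-cells as morphisms. This $\mathcal S$ is the category of scalars; since $I$ is the monoidal unit, the Eckmann--Hilton argument identifies the tensor product of endo\-1\-cells of $I$ with their composition up to coherent isomorphism and makes $\mathcal S$ symmetric monoidal, so I am free to build the algebra structure using $\circ$ and transport it along this identification. Writing $v$ for the witness $I \sxto A A$ and $w$ for $A \sxto A I$, so that $G = v\,;w$, I take the unit $u \colon \mathrm{id}_I \Rightarrow G$ to be \eqref{eq:create}, the counit $\epsilon \colon G \Rightarrow \mathrm{id}_I$ to be \eqref{eq:forget}, the multiplication $m \colon G\circ G \Rightarrow G$ to be the comparison 2\-cell \eqref{eq:compare} whiskered by $v$ on the left and $w$ on the right, and the comultiplication $\delta \colon G \Rightarrow G\circ G$ to be the copying 2\-cell \eqref{eq:copy} whiskered in the same way.

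The key structural observation is that the witnesses form an \emph{ambidextrous adjunction}: \eqref{eq:create} and \eqref{eq:compare} exhibit $v \dashv w$, while \eqref{eq:copy} and \eqref{eq:forget} exhibit $w \dashv v$, and the four yanking equations \eqref{eq:yank1}--\eqref{eq:yank4} are precisely the triangle identities of these two adjunctions. From $v \dashv w$ the endomorphism $G = v\,;w$ is the induced monad, so $(G,m,u)$ is automatically an associative, unital monoid; dually, from $w \dashv v$ the same $G$ is the induced comonad, so $(G,\delta,\epsilon)$ is automatically a coassociative, counital comonoid. Thus associativity, unitality, coassociativity and counitality require no further work beyond \eqref{eq:yank1}--\eqref{eq:yank4} together with the interchange law.

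It remains to supply the three properties that are genuinely extra. Speciality $m\cdot\delta = \mathrm{id}_G$ is exactly the whiskered form of \eqref{eq:copycompare}, which says that a copy immediately followed by a comparison closes up into a removable bubble. Cocommutativity of $\delta$ is the whiskered form of \eqref{eq:commplanar}, and commutativity of $m$ then follows from it via the $\dagger$\-operation; indeed the $\dagger$\-structure itself is immediate, since \eqref{eq:create}/\eqref{eq:forget} and \eqref{eq:copy}/\eqref{eq:compare} are horizontal mirror images and hence $\dagger$\-adjoint, giving $\epsilon = u^\dagger$ and $\delta = m^\dagger$. The one remaining axiom is the Frobenius law relating $m$ and $\delta$, the single compatibility between the monad and comonad structures that does not come for free from either adjunction alone.

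I expect this Frobenius law to be the main obstacle, since everything else is either automatic from the (co)monad structure or a direct restatement of one axiom. It can be established in two ways. The direct route is a graphical snake\-chase that slides \eqref{eq:compare} past \eqref{eq:copy}, using the triangle identities of \emph{both} adjunctions together with interchange --- this is the general fact that an ambidextrous adjunction induces a Frobenius structure on the composite. The cleaner route is to invoke the topological normal form recorded after \eqref{eq:commplanar}: every surface assembled from the components \eqref{eq:create}--\eqref{eq:compare} is determined by its boundary and genus, and both sides of the Frobenius law are the same genus\-zero surface with two inputs and two outputs, hence equal. Either way, once the Frobenius law is in hand the construction is complete, and since every structure map is manufactured directly from the witness data with no auxiliary choices, the resulting special commutative $\dagger$\-Frobenius algebra is canonical.
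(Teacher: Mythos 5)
Your proposal is correct and is essentially the paper's own proof made explicit: the paper likewise takes the unit to be \eqref{eq:create} and the multiplication to be the comparison 2\-cell whiskered by the witnesses (its 2\-cell \eqref{eq:inducedmultiplication}), derives the associativity, unit and Frobenius laws from the yanking equations \eqref{eq:yank1}--\eqref{eq:yank4} (which it already reads as ambidextrous duality, so your monad/comonad packaging is the same argument) together with interchange, and obtains speciality from \eqref{eq:copycompare}. One harmless slip: \eqref{eq:commplanar} is directly the commutativity of the whiskered \emph{multiplication} (swap followed by compare equals compare), so it is cocommutativity of $\delta$ that follows by applying $\dagger$, not the reverse as you state --- the two readings are equivalent since copy $=$ compare$^\dagger$.
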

\begin{proof}
Given witnesses $I \sxto A A$ and $A \sxto A I$, there is a commutative \dag\-Frobenius algebra on the object $I \sxto A A \sxto A I$. The multiplication is given by the following 2\-cell:
\begin{equation}
\label{eq:inducedmultiplication}
\begin{aligned}
\begin{tikzpicture}[scale=0.7]
\draw [fill=\fillA, draw=none] (0,1)
    to [out=up, in=down, out looseness=1.5] (0,2)
    to (1,2)
    to [out=down, in=up, out looseness=1.5] (1,1);
\draw [fill=\fillA, draw=none] (3,1)
    to [out=up, in=down, out looseness=1.5] (3,2)
    to (2,2)
    to [out=down, in=up, out looseness=1.5] (2,1);
\draw [fill=\fillA, draw=none] (0,2)
    to (1,2)
    to [out=up, in=up, looseness=1.5] (2,2)
    to (3,2)
    to [out=up, in=down] (2,4)
    to (1,4) to [out=down, in=up] (0,2);
\draw [thick] (3,1)
    to [out=up, in=down, in looseness=1.5] (3,2)
    to [out=up, in=down] (2,4);
\draw [thick] (0,1)
    to [out=up, in=down, in looseness=1.5] (0,2)
    to [out=up, in=down] (1,4);
\draw [thick] (2,1) 
    to (2,2)
    to [out=up, in=up, looseness=1.5] (1,2)
    to (1,1);
\end{tikzpicture}
\end{aligned}
\end{equation}
The unit is the 2\-cell~\eqref{eq:create} which creates uniform classical data. The associativity, unit and Frobenius equations follow from the duality equations~(\ref{eq:yank1}\_\ref{eq:yank4}) and the interchange rule for 2\-categories. The specialness axiom follows from the hole-deletion axiom~\eqref{eq:copycompare}.
\end{proof}

\noindent
The 1\-cell $I \sxto A A \sxto A I$ represents the witnessing system shorn of its interaction with the object $A$. The multiplication structure compares the classical data encoded in each tensor factor of the domain.

The following theorems examine the structures imposed by measurement operations.
\begin{theorem}
\label{thm:measurementclassicalstructure}
A nondegenerate measurement gives rise to a commutative \dag\-Frobenius algebra on the system being measured.
\end{theorem}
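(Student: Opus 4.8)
The plan is to observe that the measurement 2\-cell is a unitary isomorphism, in the symmetric monoidal category of scalars, between the system being measured and the object carrying the canonical Frobenius algebra of Theorem~\ref{thm:classicalstructure}, and then to transport that structure across it. Write $B$ for the 1\-cell $I \sxto A A \sxto A I$ and $T$ for the 1\-cell $I \to I$ representing the system being measured; both are objects of the category of scalars. The measurement~\eqref{eq:measurementdefinition} is a morphism $m \colon T \to B$ in this category which, by definition, is unitary, so that $m^\dagger \cdot m = \id_T$ and $m \cdot m^\dagger = \id_B$; in particular $m$ is invertible with $m^{-1} = m^\dagger$.

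By Theorem~\ref{thm:classicalstructure}, $B$ carries a canonical special commutative \dag\-Frobenius algebra, with multiplication $\nabla$ given by~\eqref{eq:inducedmultiplication} and unit $\eta$ the creation 2\-cell~\eqref{eq:create}, its comultiplication and counit being $\nabla^\dagger$ and $\eta^\dagger$. I would define a Frobenius structure on $T$ by conjugating each structure map with $m$, setting the multiplication to be $\nabla_T := m^\dagger \cdot \nabla \cdot (m \otimes m)$ and the unit to be $\eta_T := m^\dagger \cdot \eta$, with comultiplication and counit taken to be their daggers. Graphically this amounts to capping off the surface of~\eqref{eq:inducedmultiplication} with copies of $m$ and $m^\dagger$ along its three legs.

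It then remains to check that $(T, \nabla_T, \eta_T)$ is again a special commutative \dag\-Frobenius algebra. The associativity, unit, Frobenius and specialness laws all descend from the corresponding laws for $B$, since in every composite the adjacent factors $m$ and $m^\dagger$, or $m \otimes m$ and its dagger, cancel by the isometry equation $m^\dagger \cdot m = \id$; commutativity transports because the symmetry of the scalar category is natural and so slides past $m \otimes m$. The only step that genuinely uses unitarity rather than mere invertibility is the verification that the dagger of $\nabla_T$ coincides with the transported comultiplication $(m^\dagger \otimes m^\dagger) \cdot \nabla^\dagger \cdot m$, which holds precisely because $m^{-1} = m^\dagger$; this is exactly why measurement is required to be unitary, and I expect this dagger\-compatibility to be the sole point needing care, with everything else a formal consequence of transport along an isomorphism.
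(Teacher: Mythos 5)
Your proposal is correct and takes essentially the same route as the paper: the paper's proof also obtains the algebra by transporting the canonical structure of Theorem~\ref{thm:classicalstructure} along the unitary measurement 2\-cell, its displayed diagram being exactly your composite $m^\dagger \cdot \nabla \cdot (m \otimes m)$ with the legs capped by measurement vertices. The paper treats the verification as immediate and only draws the result, while you spell out the transported axioms, correctly identifying dagger\-compatibility of the comultiplication as the one point where unitarity, rather than mere invertibility, of the measurement is needed.
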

\begin{proof}
Immediate; given a unitary isomorphism between $I \sxto A A \sxto A I$ and some Hilbert space $H$, the classical structure constructed in Theorem~\ref{thm:classicalstructure} can be transported along the unitary to $H$. The result has the following representation:
\begin{equation}
\begin{aligned}
\begin{tikzpicture}[scale=0.7]
    \node (V) [Vertex] at (1.5,0.5) {};
    \node (W) [Vertex] at (2.5,3.5) {};
    \node (X) [Vertex] at (3.5,0.5) {};
    \draw [fill=\fillA, draw=none]
        (2.5,3.5)
        to [out=\seangle, in=up] (3,3)
        to [out=down, in=up, in looseness=1.5] (4,1)
        to [out=down, in=\neangle] (3.5,0.5)
        to [in=down, out=\nwangle] (3,1)
        to [out=up, in=up, looseness=1.5] (2,1)
        to [out=down, in=\neangle] (V.center)
        to [out=\nwangle, in=down] (1,1)
        to [out=up, in=down, out looseness=1.5] (2,3)
        to [out=up, in=\swangle] (2.5,3.5);
    \draw [thick]
        (2.5,3.5)
        to [out=\seangle, in=up] (3,3)
        to [out=down, in=up, in looseness=1.5] (4,1)
        to [out=down, in=\neangle]  (3.5,0.5);
    \draw [thick] (3.5,0)
        to (3.5,0.5)
        to [out=\nwangle, in=down] (3,1)
        to [out=up, in=up, looseness=1.5] (2,1)
        to [out=down, in=\neangle] (V.center);
    \draw [thick]
        (1.5,0)
        to node [auto, swap] {} (1.5,0.5);
    \draw [thick] (2.5,4) to (2.5,3.5)
        to [out=\swangle, in=up] (2,3)
        to [out=down, in=up, in looseness=1.5] (1,1)
        to [out=down, in=\nwangle]
            node [auto, swap, pos=0.4, inner sep=1pt] {}
            (V.center);
    \draw [thick] (V.center)
        to [out=\neangle, in=down] (2,1);
\end{tikzpicture}
\end{aligned}
\end{equation}
\end{proof}

\begin{theorem}
A projective measurement gives rise to a module for the commutative \dag-Frobenius algebra arising from the associated witness.
\end{theorem}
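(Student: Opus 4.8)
The plan is to follow the pattern of Theorems~\ref{thm:classicalstructure} and~\ref{thm:measurementclassicalstructure}: exhibit the module object together with an explicit action 2\-cell, and then verify the two module laws by the same topological reasoning. Write $\mathbf{A}$ for the commutative \dag\-Frobenius algebra carried by the 1\-cell $I \sxto A A \sxto A I$ from Theorem~\ref{thm:classicalstructure}, with multiplication~\eqref{eq:inducedmultiplication} and unit~\eqref{eq:create}. A projective measurement~\eqref{eq:arbitrarymeasurement} is a unitary whose target is the composite $\mathbf{T}_A := (I \sxto{T_A} A \sxto A I)$, a 1\-cell $I \to I$ and hence an object of the category of scalars; this composite is the module.

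First I would define the action. Since $\mathbf{T}_A$ is the system $T_A : I \to A$ followed by the witness $A \sxto A I$, I would take the right action $\rho : \mathbf{T}_A \otimes \mathbf{A} \to \mathbf{T}_A$ to be the 2\-cell obtained from the multiplication surface~\eqref{eq:inducedmultiplication} by replacing its left-hand witness leg $I \sxto A A$ with the 1\-cell $T_A$. Concretely $\rho$ caps off the witness strand of $\mathbf{T}_A$ against the incoming witness strand of $\mathbf{A}$ using the comparison 2\-cell~\eqref{eq:compare}, leaving $T_A$ and the outgoing witness of $\mathbf{A}$ as output, which is again $\mathbf{T}_A$. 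The replaced leg takes no part in the comparison, so $\rho$ is well\-typed for exactly the same reason the multiplication is. As $\mathbf{A}$ is commutative, the symmetry of the 2\-category identifies this right action with the corresponding left action, so it is enough to check the right\-module laws.

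Next I would verify those laws. Associativity,
\[
\rho \cdot (\rho \otimes \id_{\mathbf A}) \;=\; \rho \cdot (\id_{\mathbf{T}_A} \otimes \mu),
\]
holds because both composites describe the same surface, namely the two comparison caps applied to the witness strands of $\mathbf{T}_A \otimes \mathbf{A} \otimes \mathbf{A}$ with $T_A$ passing straight through, and so agree by the interchange rule for 2\-categories, exactly as associativity of $\mu$ was obtained in Theorem~\ref{thm:classicalstructure}. The unit law $\rho \cdot (\id_{\mathbf{T}_A} \otimes \eta) = \id_{\mathbf{T}_A}$ follows from a single zig\-zag move: attaching a witness pair with the unit~\eqref{eq:create} and then comparing the adjacent strand straightens out by one of the duality axioms~(\ref{eq:yank1}\_\ref{eq:yank4}), returning the bare 1\-cell $\mathbf{T}_A$. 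This establishes that $\mathbf{T}_A$ is an $\mathbf{A}$\-module.

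Finally, because the measurement~\eqref{eq:arbitrarymeasurement} is a unitary isomorphism between the original system $I \sxto T I$ and $\mathbf{T}_A$, this module structure transports along it to the system being measured, just as the Frobenius structure was transported in Theorem~\ref{thm:measurementclassicalstructure}. I expect the only delicate point to be the bookkeeping of the construction itself: choosing the side of the action and confirming that the replaced leg is a witness, so that the comparison~\eqref{eq:compare} is applied to two genuine witness strands. Once the action 2\-cell is correctly identified, both module laws reduce verbatim to the topological arguments already used for the Frobenius algebra, and no new ingredient beyond Theorem~\ref{thm:classicalstructure} is required.
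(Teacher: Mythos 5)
Your proposal is correct and matches the paper's argument in all essentials: the paper likewise defines the action by the comparison cap applied between the witness strand of the measured composite and the incoming algebra strand, and disposes of the associativity and unit axioms via the duality equations~(\ref{eq:yank1}\_\ref{eq:yank4}) and the interchange rule. The only cosmetic difference is that the paper writes down the transported action on the measured system directly as a single 2\-cell (with the measurement vertex at the bottom and its inverse at the top), whereas you first exhibit the canonical module on the composite $I \sxto{T_A} A \sxto{A} I$ and then transport along the unitary, exactly as in Theorem~\ref{thm:measurementclassicalstructure}.
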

\begin{proof}
A projective measurement is represented by a unitary of the following type:
\begin{equation}
\begin{aligned}
\begin{tikzpicture}[scale=1]
    \node (V) [Vertex] at (1.5,1) {};
    \draw [fill=\fillA, draw=none]
        (2,1.6)
        to [out=down, in=\neangle] (V.center)
        to [out=\nwangle, in=down] (1,1.6);
    \draw [thicker]
        (1.5,0.4)
        to node [auto, swap] {} (1.5,1.0);
    \draw [thicker] (1,1.6)
        to [out=down, in=\nwangle] node [auto, swap, pos=0.4, inner sep=1pt] {} (V.center);
    \draw [thick] (V.center)
        to [out=\neangle, in=down] (2,1.6);
\end{tikzpicture}
\end{aligned}
\end{equation}
We define our module action to be the following 2\-cell:
\begin{equation}
\begin{aligned}
\begin{tikzpicture}[scale=0.7]
    \node (V) [Vertex] at (1.5,0.5) {};
    \node (W) [Vertex] at (2.5,3.5) {};
    \draw [fill=\fillA, draw=none]
        (2.5,3.5)
        to [out=\seangle, in=up] (3,3)
        to [out=down, in=up, in looseness=1.5] (4,1) to (4,0) to (3,0) to (3,1)
        to [out=up, in=up, looseness=1.5] (2,1)
        to [out=down, in=\neangle] (V.center)
        to [out=\nwangle, in=down] (1,1)
        to [out=up, in=down, out looseness=1.5] (2,3)
        to [out=up, in=\swangle] (2.5,3.5);
    \draw [thick]
        (2.5,3.5)
        to [out=\seangle, in=up] (3,3)
        to [out=down, in=up, in looseness=1.5] (4,1)
        to (4,0);
    \draw [thick]  (3,0) to (3,1)
        to [out=up, in=up, looseness=1.5] (2,1)
        to [out=down, in=\neangle] (V.center);
    \draw [thicker]
        (1.5,0)
        to node [auto, swap] {} (1.5,0.5);
    \draw [thicker] (2.5,4) to (2.5,3.5)
        to [out=\swangle, in=up] (2,3)
        to [out=down, in=up, in looseness=1.5] (1,1)
        to [out=down, in=\nwangle]
            node [auto, swap, pos=0.4, inner sep=1pt] {}
            (V.center);
    \draw [thick] (V.center)
        to [out=\neangle, in=down] (2,1);
\end{tikzpicture}
\end{aligned}
\end{equation}
The associativity axiom and unit module axioms follow from the duality equations~(\ref{eq:yank1}\_\ref{eq:yank4}) and the interchange rule for 2\-categories.
\end{proof}

\section{2--Hilbert spaces}
\label{sec:2hilb}

\subsection{Introduction}

Applying the formalism described in Section~\ref{sec:formalism} in different symmetric monoidal 2\-categories gives us different interpretations of the constructions the formalism describes: classical data, physical systems, measurements and controlled operations. In this sense, we can think of a choice of target 2\-category as a `theory of physics' or `model of computation' in which we can work. To recover finite-dimensional quantum physics, we must work in \cat{2Hilb}, the symmetric monoidal 2\-category of finite-dimensional 2\_Hilbert spaces~\cite{b97-hda2}.

This 2\-category can be presented in different, equivalent ways. Each of these gives us an independent way to see why applying our formalism in \cat{2Hilb} should recover ordinary quantum theory. We will consider the following perspectives:
\begin{enumerate}
\item
\label{item:matrices}
Objects are natural numbers, 1\-cells are matrices of finite-dimensional Hilbert spaces, 2\-cells are matrices of bounded linear maps.
\item
\label{item:environments}
Objects are environments storing classical information, 1\-cells are quantum systems equipped with environmental interactions, 2\-cells are dynamical maps protected from decoherence.
\end{enumerate}
This section will focus on perspective~\ref{item:matrices}. We first describe the basic theory of 2\_Hilbert spaces. Then, taking perspective~\ref{item:matrices} as primary, we show in a series of theorems how applying the formal axiomatizations of classical information, measurement and controlled operations given in Section~\ref{sec:formalism} reproduces the conventional quantum notions. Perspective~\ref{item:environments} listed above will be discussed in Appendix~\ref{sec:environmental}.

Expounding these different perspectives has several benefits. Primarily, it provides a well-rounded intuition for why \cat{2Hilb} is the correct 2\-category for describing quantum theory. But it also places these ideas in a broader mathematical context, suggesting different paths to desirable generalizations, such as infinite-dimensional quantum theory or different theories of physics.

\subsection{Basic theory}
\label{sec:basic2hilb}

\subsubsection*{Definition}
We begin by summarizing some basic aspects of the theory of 2\_Hilbert spaces, as developed by Baez~\cite{b97-hda2}. A \textit{2\_Hilbert space} is an abelian category enriched over \cat{Hilb}, the category of finite-dimensional Hilbert spaces,  with a $\dag$\-structure, conjugate-linear on the hom-sets, satisfying
\begin{equation}
\langle f \circ g, h \rangle = \langle g , f^\dag \circ h \rangle = \langle f , h \circ g^\dag \rangle
\end{equation}
for all morphisms $f,g,h$ for which these composites make sense. This definition is analogous to the definition of an ordinary Hilbert space, as a complex vector space equipped with a positive-definite inner product. The 2\-category \cat{2Hilb} of 2\_Hilbert spaces has 2\_Hilbert spaces as objects, linear functors as 1\-cells, and natural transformations as 2\-cells. However, thanks to a structure theorem for finite-dimensional 2\_Hilbert spaces which we describe below, we will not need to work directly with this definition. 

\subsubsection*{Bases}

A \emph{basis} for a 2\_Hilbert space is a set of objects from which every object can be constructed by taking finite direct sums. A zero object is considered as an empty direct sum for this purpose. Every 2\_Hilbert space has a basis, and the \emph{dimension} $\dim(H)$ of a 2\_Hilbert space $H$ is the cardinality of its smallest basis. A 2\_Hilbert space is \emph{finite-dimensional} when $\dim(H)$ is finite. These definitions are directly analogous to the corresponding notions for a finite-dimensional Hilbert space. From now on we will assume all our 2\_Hilbert spaces are finite-dimensional. A functor $F$ between 2\_Hilbert spaces is \emph{linear} if, for all morphisms $f,g$ in the same homset of the domain, we have $F(f+g)=F(f)+F(g)$. We define \cat{2Hilb} to be the 2\-category with finite-dimensional 2\_Hilbert spaces as objects, linear functors as 1\-cells, and natural transformations as 2\-cells. Our concern in this paper is finite-dimensional quantum theory, and so from now on we focus on finite-dimensional 2\_Hilbert spaces; there is a general theory of 2\_Hilbert spaces as representation categories of arbitrary von Neumann algebras~\cite{bbfw08-idr}, which would be relevant for further investigations into the appropriate 2\-categorical structure for infinite-dimensional quantum theory.

A structure theorem says that for every finite-dimensional 2\_Hilbert space $H$, there is an equivalence
\begin{equation}
\label{eq:structuretheorem}
H \simeq \cat{Hilb} ^{\dim(H)},
\end{equation}
where the right-hand side represents the Cartesian product of $\dim(H)$ copies of the category of finite-dimensional Hilbert spaces. This gives an analogy once again to the theory of ordinary Hilbert spaces, where for every finite-dimensional Hilbert space $J$, we have an isomorphism
\begin{equation}
J \simeq \mathbb{C} ^{\dim(J)}.
\end{equation}
2\_Hilbert spaces are therefore quite tractable mathematical objects: up to equivalence, their objects are simply $n$-tuples of finite-dimensional Hilbert spaces, and their morphisms are $n$-tuples of linear maps.

Taking advantage of this structure theorem, we restrict our attention to 2\_Hilbert spaces of the form $\cat{Hilb}^n$ where $n$ is a natural number. In a 2\_Hilbert space, an object $S$ is \emph{simple} if it is not a zero object, and cannot be written as a direct sum of other objects in  nontrivial way. In the 2\_Hilbert space $\cat{Hilb}^n$ there are $n$ isomorphism classes of simple objects, of which a convenient family of representatives are  $(\mathbb{C},0,\ldots,0)$, $(0, \mathbb{C}, \ldots, 0)$, $\ldots$, $(0, 0, \ldots, \C)$. This family of objects provides a basis for $\cat{Hilb}^n$.

\subsubsection*{Matrix notation}

A linear functor $\cat{Hilb} ^n \to \cat{Hilb} ^m$ is completely defined up to isomorphism by its action on a basis of simple objects of $\cat{Hilb}^n$, each of which will be mapped independently by $F$ to an object of $\cat{Hilb}^m$. We can use this to represent a linear functor as a matrix of Hilbert spaces:
\begin{equation}
\label{eq:matrixexample}
\matrix{F_{1,1} & F_{1,2} & \cdots & F_{1,n}
\\
F_{2,1} & F_{2,2} & \cdots & F _{2,n}
\\
\vdots & \vdots & \ddots & \vdots
\\
F_{m,1} & F_{m,2} & \cdots & F_{m,n}}
\end{equation}
This is analogous to the matrix representation of a bounded linear map between finite-dimensional Hilbert spaces with chosen basis.

Up to isomorphism, composition of functors is given by matrix composition, with direct sum and tensor product taking the place of addition and multiplication for ordinary matrix multiplication. For example, we can compose the following functors of type $\cat{Hilb}^2 \to \cat{Hilb}^2$ in the following way:
\begin{align}
\nonumber
&\matrix{G_{1,1} & G_{1,2}
\\
G_{2,1} & G_{2,2}}
\circ
\matrix{H_{1,1} & H_{1,2}
\\
H_{2,1} & H_{2,2}}
\\
&\quad\simeq
\matrix{
(G_{1,1} \! \otimes \! H_{1,1}) \oplus (G_{1,2} \! \otimes \! H_{2,1}) & (G_{1,1} \! \otimes \! H_{1,2} )\oplus (G_{1,2} \! \otimes \! H_{2,2})
\\
(G_{2,1} \! \otimes \! H_{1,1}) \oplus (G_{2,2} \! \otimes \! H_{2,1}) & (G_{2,1} \! \otimes \! H_{1,2} )\oplus (G_{2,2} \! \otimes \! H_{2,2})}
\end{align}
This is only isomorphic to the composite, rather than strictly equal, since composition of functors is strictly associative, but this composition operation is not. We have a tradeoff here that is common in higher category theory: by making our space of 1\-cells easier to understand, a form of \emph{skeletality}, we lose good properties of composition of 1\-cells, a form of \emph{strictness}.

We can also use the matrix calculus to describe objects of our 2\_Hilbert spaces. Up to isomorphism, objects of a 2\_Hilbert space $\cat{Hilb}^n$ correspond to  functors $\cat{Hilb} \to \cat{Hilb}^n$, by considering the value taken by the functor on the object $\mathbb{C}$ in \cat{Hilb}. Using the matrix notation, an object $(H_1, \ldots, H_n)$ of $\cat{Hilb}^n$ corresponds to the following functor:
\begin{equation}
\matrix{H_1
\\
H_2
\\
\vdots
\\
H_n}
\end{equation}
The action of functors on objects can be calculated up to isomorphism by composition of functors.

\subsubsection*{Natural transformations}

A natural transformation $L: F \Rightarrow G$ between two matrices is given by a family of bounded linear maps $L_{i,j} : F_{i,j} \to G_{i,j}$. We write this as a matrix of linear maps, in the following way:
\begin{equation}
\matrix{ F_{1,1} & F_{1,2} \\ F_{2,1} & F_{2,2} }
\xrightarrow{ \matrix{ L_{1,1} & L_{1,2} \\ L_{2,1} & L_{2,2} } }
\matrix{ G_{1,1} & G_{1,2} \\ G_{2,1} & G_{2,2} }
\end{equation}
Vertical composition of 2\-cells, denoted in the graphical calculus by vertical juxtaposition, acts elementwise by composition of linear maps. Horizontal composition and tensor product act in a more complicated way, which we do not describe directly here.

There is a reversal operation $\dag$ on \cat{2Hilb}, in the sense of Section~\ref{sec:dagger}. In our matrix representation, it sends matrices of linear maps to matrices of their adjoints. So acting on the example above, it gives the following result:
\begin{equation}
\matrix{ G_{1,1} & G_{1,2} \\ G_{2,1} & G_{2,2} }
\xrightarrow{ \matrix{ L_{1,1} {}^\dag & L_{1,2} {}^\dag \\ L_{2,1} {}^\dag & L_{2,2} {}^\dag } }
\matrix{ F_{1,1} & F_{1,2} \\ F_{2,1} & F_{2,2} }
\end{equation}
This $\dag$-operation, called ``$*$'' in the paper~\cite{b97-hda2}, acts as the identity on 0\-cells and 1\-cells.

In the category of finite-dimensional Hilbert spaces, we have the following basic property of endomorphisms.
\begin{lemma}
In the category of finite-dimensional Hilbert spaces, for any two morphisms $H \sxto{f,g} H$, for any object $H$, we have $f \circ g = \id_H \Leftrightarrow g \circ f = \id_H$.
\end{lemma}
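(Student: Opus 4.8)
The plan is to observe that the inner-product structure is a red herring: the claim is really a statement about a finite-dimensional complex vector space $H$ and two linear endomorphisms $f,g$, so I would prove it with elementary linear algebra and not invoke the $\dag$\-structure at all. Since the hypothesis and the conclusion are interchanged by swapping the roles of $f$ and $g$, it suffices to prove a single implication, say $f \circ g = \id_H \Rightarrow g \circ f = \id_H$; the reverse implication then follows by relabelling.

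First I would extract injectivity of $g$ from the hypothesis: if $g(x) = 0$ then $x = \id_H(x) = f(g(x)) = f(0) = 0$, so $g$ has trivial kernel. The key step is then to upgrade this to bijectivity using finite-dimensionality: by rank--nullity, $\dim \ker g + \dim \operatorname{im} g = \dim H$, and $\ker g = 0$ forces $\operatorname{im} g = H$, so $g$ is an isomorphism with a genuine two-sided inverse $g^{-1}$. Composing the hypothesis $f \circ g = \id_H$ on the right with $g^{-1}$ gives $f = g^{-1}$, whence $g \circ f = g \circ g^{-1} = \id_H$, as required.

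The only substantive ingredient is finite-dimensionality, and that is where the entire content of the lemma sits: the rank--nullity step is precisely what converts a one-sided inverse into a two-sided one, and it fails in infinite dimensions (the unilateral shift on $\ell^2$ has a left inverse but no right inverse). An alternative route avoiding rank--nullity would be to apply $\det$ to $f \circ g = \id_H$, obtaining $\det(f)\det(g) = 1$, so that $\det(g) \neq 0$ and $g$ is invertible; this is equally short and again uses finite-dimensionality, through multiplicativity of the determinant. I do not anticipate any real obstacle here beyond being explicit that the Hilbert-space structure plays no part.
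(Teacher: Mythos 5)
Your proposal is correct, and your primary route differs from the paper's. The paper argues via determinants: from $f \circ g = \id_H$ it deduces $\det(f)\det(g) \neq 0$ (the paper actually writes $\det(f)\det(g) > 0$, a small slip since these are complex numbers --- the correct statement is $\det(f)\det(g) = \det(\id_H) = 1$, but nonvanishing is all that is used), concludes that \emph{both} $f$ and $g$ are invertible, picks $g'$ with $g' \circ f = \id_H$, and computes $g' = g' \circ f \circ g = g$. Your main argument instead extracts injectivity of $g$ directly from the left inverse and upgrades it to bijectivity by rank--nullity, then identifies $f = g^{-1}$; the determinant route you sketch as an alternative is essentially the paper's proof verbatim. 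Both arguments turn on finite-dimensionality, as they must, and your unilateral-shift counterexample pinpoints exactly why. What your rank--nullity version buys is slightly greater economy of hypotheses: it avoids multiplicativity of the determinant and works verbatim for any endomorphism of a finite-dimensional vector space over any field, making fully explicit your (correct) observation that the Hilbert-space structure is inert here. What the paper's version buys is symmetry --- it establishes invertibility of $f$ and $g$ simultaneously in one line --- and it then closes with a purely algebraic uniqueness-of-inverse computation rather than your explicit $f = g^{-1}$ cancellation; the two closing steps are of equal weight.
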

\begin{proof}
Suppose we have $f \circ g = \id_H$. Then $\det(f)\det(g) > 0$, so both $f$ and $g$ are invertible. So there exists some $g'$ with $g' \circ f = \id _H$. But then $g' = g' \circ f \circ g = g$.
\end{proof}

\noindent
This straightforwardly extends to 2\-cells in \cat{2Hilb}.

\begin{lemma}
\label{lem:fg1gf1}
In the 2\-category of finite-dimensional 2\_Hilbert spaces, for any two 2\-cells $F\smash{\xdoubleto{\smash{\sigma, \tau}}}F$, for any 1\-cell $F$, we have $\sigma \cdot \tau = \id _F \Leftrightarrow \tau \cdot \sigma = \id_F$.
\end{lemma}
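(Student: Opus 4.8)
The plan is to reduce the claim to the preceding lemma by passing to the matrix representation of \cat{2Hilb} and arguing entry by entry. First I would invoke the structure theorem~\eqref{eq:structuretheorem} to assume without loss of generality that the source and target 2\_Hilbert spaces of $F$ are of the form $\cat{Hilb}^n$ and $\cat{Hilb}^m$. Then $F$ is presented as a matrix of finite-dimensional Hilbert spaces $(F_{i,j})_{1 \le i \le m,\,1 \le j \le n}$, and the endo-2-cells $\sigma, \tau : F \Rightarrow F$ are presented as matrices of bounded linear maps with entries $\sigma_{i,j}, \tau_{i,j} : F_{i,j} \to F_{i,j}$.

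The key structural fact I would use is that vertical composition of 2\-cells in this representation acts elementwise, so that $(\sigma \cdot \tau)_{i,j} = \sigma_{i,j} \circ \tau_{i,j}$, while the identity 2\-cell $\id_F$ is the matrix whose $(i,j)$-entry is $\id_{F_{i,j}}$. Consequently $\sigma \cdot \tau = \id_F$ holds if and only if $\sigma_{i,j} \circ \tau_{i,j} = \id_{F_{i,j}}$ for every pair $(i,j)$, and symmetrically $\tau \cdot \sigma = \id_F$ holds if and only if $\tau_{i,j} \circ \sigma_{i,j} = \id_{F_{i,j}}$ for every $(i,j)$.

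Each $F_{i,j}$ is a finite-dimensional Hilbert space and each $\sigma_{i,j}, \tau_{i,j}$ is an endomorphism of it, so the previous lemma applies to every entry: $\sigma_{i,j} \circ \tau_{i,j} = \id_{F_{i,j}} \Leftrightarrow \tau_{i,j} \circ \sigma_{i,j} = \id_{F_{i,j}}$. Taking the conjunction over all $(i,j)$ then yields precisely $\sigma \cdot \tau = \id_F \Leftrightarrow \tau \cdot \sigma = \id_F$. Conceptually, this is just the statement that the vertical endomorphism monoid of $F$ is a finite-dimensional $\mathbb{C}$-algebra, in which one-sided inverses are automatically two-sided.

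I do not expect any serious obstacle; the only points that need care are bookkeeping. One is that the reduction via the structure theorem is legitimate, which holds because an equivalence of 2\_Hilbert spaces transports vertical composites and identity 2\-cells faithfully. The other is to fix a consistent order convention for vertical composition; since the desired equivalence is symmetric in $\sigma$ and $\tau$, the choice of convention does not affect the conclusion.
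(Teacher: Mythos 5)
Your proposal is correct and matches the paper's own argument: the paper's proof is exactly the observation that vertical composition of 2\-cells acts entrywise on the matrix presentation, so the previous lemma on finite-dimensional Hilbert spaces applies to each component $\sigma_{i,j}, \tau_{i,j}$. Your write-up merely makes explicit the bookkeeping (the structure theorem reduction and the elementwise identity 2\-cell) that the paper leaves implicit in its one-line proof.
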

\begin{proof}
Immediate from the previous lemma, since vertical composition of 2\-cells is given by composition of the constituent linear maps.
\end{proof}

\subsubsection*{Adjoints}

Every linear functor between finite-dimensional 2\_Hilbert spaces has a simultaneous left and right adjoint. In terms of the matrix formalism, this is constructed by constructing the dual Hilbert space for each entry in the matrix, and then taking the transpose of the matrix. This is analogous to the conjugate transpose operation that constructs the adjoint of a matrix representing a linear map between Hilbert spaces.

To take a concrete example, the adjoint $F^\dag$ to the functor $F$ presented above in expression~\eqref{eq:matrixexample} has the following form:
\begin{equation}
\label{eq:adjointmatrixexample}
\matrix{F_{1,1} ^* & F_{2,1} ^* & \cdots & F_{m,1} ^*
\\
F_{1,2} ^* & F_{2,2} ^* & \cdots & F _{m,2} ^*
\\
\vdots & \vdots & \ddots & \vdots
\\
F_{1,n} ^* & F_{2,n} ^* & \cdots & F_{m,n} ^*}
\end{equation}
To present the adjunction $F \dashv F ^\dag$, we must construct natural transformations $\id_{\cat{Hilb} _n} \smash{{} \xdoubleto {\smash{\sigma}}{}} F^\dag \circ F$ and $F \circ F^\dag \smash{{}\xdoubleto {\smash{\tau}}{}} \id_{\cat{Hilb}_m}$. These are defined in the following way, given unit and counit maps $\eta _{i,j} : \C \to F_{i,j} ^* \otimes F_{i,j}^\pstar$ and $\epsilon _{i,j} : F_{i,j} ^\pstar \otimes F_{i,j} ^* \to \C$:
\begin{calign}
\sigma = \matrix{\bigoplus_j \eta _{1,j} & 0 & \cdots & 0
\\
0 & \bigoplus_j \eta_{2,j} & \cdots & 0
\\
\vdots & \vdots & \ddots & \vdots
\\
0 & 0 & \cdots & \bigoplus_j \eta_{n,j}}
\\
\tau = \matrix{\bigoplus_j \epsilon _{j,1} & 0 & \cdots & 0
\\
0 & \bigoplus_j \epsilon_{j,2} & \cdots & 0
\\
\vdots & \vdots & \ddots & \vdots
\\
0 & 0 & \cdots & \bigoplus_j \epsilon_{j,n}}
\end{calign}
Conversely, every adjunction $F \dashv F^\dag$ is of this form for some family of unit and counit maps. Taking the adjoint of each of these linear maps gives data for the opposite adjunction $F ^\dag \dashv F$.

\subsection{Interpreting the formalism}

\subsubsection*{Interpreting objects, 1\-cells and 2\-cells}

\textbf{Objects.} In our abstract formalism, objects represent types of classical information. In \cat{2Hilb} objects are characterized up to equivalence by the natural numbers, so these form our classical data types.

\vspace{5pt}
\noindent
\textbf{1-cells.} A 1\-cell $A \sxto F B$ in our formalism represents a physical system, with classical information about its state encoded in the objects $A$ and $B$. In \cat{2Hilb}, such a 1\-cell is described up to equivalence by a matrix of Hilbert spaces, by the discussion in Section~\ref{sec:basic2hilb}. This represents a system with Hilbert space ${\bigoplus} _{ij} F _{ij}$, the direct sum of all the entries of the matrix. Knowing the values of the classical data types $A$ and $B$ gives a restriction to a row and column of the matrix, telling us the subspace in which our state must be located.

\vspace{5pt}
\noindent
\textbf{2-cells.} Given two systems $A \sxto{F,G} B$, a 2\-cell $F \doubleto G$ is a matrix of linear maps, one for each element of the set $A \times B$. This gives rise to a linear map ${\bigoplus} _{ij} F _{ij} \! \to {\bigoplus}_{ij} G_{ij}$, and hence provides quantum dynamics transforming the system represented by $F$ into the system represented by $G$ in the ordinary sense. However, by construction, this overall map preserves the partitions defined on these spaces. The row and column indexes enumerating the partitions are dynamical constants, which the quantum evolution is required to preserve.

\subsubsection*{Witnesses for classical data}

 Following Section~\ref{sec:witness}, a witness for an object $\cat{Hilb}^n$ of \cat{2Hilb} is a pair of 1\-cells $\cat{Hilb} \sxto n \cat{Hilb}^n$ and $\cat{Hilb}^n \sxto n \cat{Hilb}$, equipped with 2\-cells~(\ref{eq:create}\_\ref{eq:compare}) satisfying equations~(\ref{eq:yank1}\_\ref{eq:commplanar}).

\newcommand\drawboundary{    \draw [black!20]
        ([xshift=-\boxmargin, yshift=-\boxmargin] current bounding box.south west)
        rectangle
        ([xshift=\boxmargin, yshift=\boxmargin] current bounding box.north east);
}
\begin{theorem}
\label{thm:2hilbwitness}
In \cat{2Hilb}, each object $\cat{Hilb}^n$ has a unique witness up to unitary isomorphism, for which the 1\-cell part consists of the $n$-element matrices
\begin{calign}
\begin{aligned}
\begin{tikzpicture}[scale=1]
\draw [white] (0,0) to (1,2);
\draw [fill=\fillA, draw=none] (0,0)
    to (1,0)
    to (1,2)
    to (0,2);
\draw [thick] (0,0) to (0,2);
\end{tikzpicture}
\end{aligned}
\,\,=\,\,
\matrix{\C \\ \C \\ \vdots \\ \C }
&
\begin{aligned}
\begin{tikzpicture}[scale=1]
\draw [white] (0,0) to (1,2);
\draw [fill=\fillA, draw=none] (0,0)
    to (1,0)
    to (1,2)
    to (0,2);
\draw [thick] (1,0) to (1,2);
\end{tikzpicture}
\end{aligned}
\,\,=\,\,
\matrix{\,\C & \C & \cdots & \C \!}
\end{calign}
and the 2\-cell part consists of the following structures, where each matrix has either 1 or $n$ rows or columns:
\def\extraspace{\hspace*{0pt}}
\begin{align}
\label{eq:concretecopy}
\begin{array}{c}
\begin{aligned}
\begin{tikzpicture}
    \draw [fill=\fillA, draw=none] (0.2,-0.5)
        to (0.5,-0.5)
        to [out=down, in=down, looseness=1.5] (1.5,-0.5)
        to (1.8,-0.5)
        to (1.8,-1.5)
        to (0.2,-1.5)
        to (0.2,-0.5);
    \draw [thick] (0.5,-0.5)
        to [out=down, in=down, looseness=1.5] (1.5,-0.5);
    \drawboundary
\end{tikzpicture}
\end{aligned}
\\
\text{\em Copy}
\end{array}
&=
\,\,
\matrix{
\C & 0 & \cdots & 0
\\
0 & \C & \cdots & 0
\\
\vdots & \vdots & \ddots & \vdots
\\
0 & 0 & \cdots & \C}
\xrightarrow{\matrix{
\matrix{1} & \matrix{0_{0,\C}} & \cdots & \matrix{0_{0,\C}}
\\
\matrix{0_{0,\C}} & \matrix{1} & \cdots & \matrix{0_{0,\C}}
\\
\vdots & \vdots & \ddots & \vdots
\\
\matrix{0_{0,\C}} & \matrix{0_{0,\C}} & \cdots & \matrix{1}}
}
\matrix{
\C & \C & \cdots & \C
\\
\C & \C & \cdots & \C
\\
\vdots & \vdots & \ddots & \vdots
\\
\C & \C & \cdots & \C}
\extraspace
\\
\label{eq:concretecompare}
\begin{array}{c}
\begin{aligned}
\begin{tikzpicture}
    \draw [fill=\fillA, draw=none] (0.2,0.5)
        to (0.5,0.5)
        to [out=up, in=up, looseness=1.5] (1.5,0.5)
        to (1.8,0.5)
        to (1.8,1.5)
        to (0.2,1.5)
        to (0.2,0.5);
    \draw [thick] (0.5,0.5)
        to [out=up, in=up, looseness=1.5] (1.5,0.5);
    \drawboundary
\end{tikzpicture}
\end{aligned}
\\
\text{\em Compare}
\end{array}
&=\,\,
\matrix{
\C & \C & \cdots & \C
\\
\C & \C & \cdots & \C
\\
\vdots & \vdots & \ddots & \vdots
\\
\C & \C & \cdots & \C} 
\xrightarrow{\matrix{
\matrix{1} & \matrix{0_{\C,0}} & \cdots & \matrix{0_{\C,0}}
\\
\matrix{0_{\C,0}} & \matrix{1} & \cdots & \matrix{0_{\C,0}}
\\
\vdots & \vdots & \ddots & \vdots
\\
\matrix{0_{\C,0}} & \matrix{0_{\C,0}} & \cdots & \matrix{1}}
}
\matrix{
\C & 0 & \cdots & 0
\\
0 & \C & \cdots & 0
\\
\vdots & \vdots & \ddots & \vdots
\\
0 & 0 & \cdots & \C}
\extraspace
\\
\label{eq:concretecreate}
\begin{array}{c}
\begin{aligned}
\begin{tikzpicture}
    \draw [white] (0.2,1) to (1.8,1);
    \draw [fill=\fillA, thick] (0.5,1.5)
        to [out=down, in=down, looseness=1.5] (1.5,1.5);
    \draw [thick, white] (1,0.5) to (1,1);
    \drawboundary
\end{tikzpicture}
\end{aligned}
\\
\text{\em Create}
\end{array}
&=\,\,
\matrix { \C }
\xrightarrow{
\matrix{
\matrix{1
\\
1
\\
\vdots
\\
1
}}}
\matrix { \C ^n }
\\[20pt]
\label{eq:concretedelete}
\begin{array}{c}
\begin{aligned}
\begin{tikzpicture}
    \draw [white] (0.2,-1) to (1.8,-1);
    \draw [fill=\fillA, thick] (0.5,-1.5)
        to [out=up, in=up, looseness=1.5] (1.5,-1.5);
    \draw [thick, white] (1,-0.5) to (1,-1);
    \drawboundary
\end{tikzpicture}
\end{aligned}
\\
\text{\em Delete}
\end{array}
&=\,\,
\matrix { \C^n }
\xrightarrow{
\Big(
\matrix{1
&
1
&
\cdots
&
1\hspace{-6pt}
} \Big)}
\matrix { \C }
\end{align}
\end{theorem}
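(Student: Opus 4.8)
The statement has two halves, existence and uniqueness, which I would treat separately. For \textbf{existence}, the plan is to substitute the displayed matrices into each witness axiom \eqref{eq:yank1}--\eqref{eq:commplanar} and check it holds. Writing the row $1$-cell as $R=(\C,\dots,\C)$ and the column $1$-cell as $C=(\C,\dots,\C)^{\top}$, the composites are the $1\times 1$ matrix $R\circ C\cong\C^{n}$ and the all-$\C$ matrix $C\circ R$ of size $n\times n$. Each of the four yanking equations then reduces to a snake identity for the self-dual object $\C$, which is trivial; specialness \eqref{eq:copycompare} becomes the fact that the diagonal inclusion followed by the diagonal projection is the identity; and commutativity \eqref{eq:commplanar} is the invariance of the all-$\C$ matrix under interchange. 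The only delicate point is that horizontal composition in $\cat{2Hilb}$ is combinatorially intricate, so I would compute the whiskered composites appearing in these axioms explicitly rather than by analogy with ordinary linear algebra.

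For \textbf{uniqueness}, I would begin with an arbitrary witness, whose $1$-cell parts are a row $R=(R_1,\dots,R_n)$ and a column $C=(C_1,\dots,C_n)^{\top}$ of Hilbert spaces. The ambidextrous-duality axioms \eqref{eq:yank1}--\eqref{eq:yank4} exhibit $C$ as a two-sided adjoint of $R$; since adjoints in $\cat{2Hilb}$ are computed entrywise by the dual Hilbert space, uniqueness of adjoints forces $C_i\cong R_i^{*}$ and identifies \eqref{eq:compare} and the copy $2$-cell with the evaluation and coevaluation maps. Feeding this into Theorem~\ref{thm:classicalstructure}, the induced Frobenius algebra lives on the scalar object $\bigoplus_i R_i\otimes R_i^{*}=\bigoplus_i\mathrm{End}(R_i)$, with multiplication given by blockwise composition of endomorphisms, so that as an algebra it is $\bigoplus_i M_{\dim R_i}(\C)$ with its matrix product.

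The crux is to force every $R_i=\C$, which I would extract from the remaining two axioms. Commutativity of this algebra, guaranteed by \eqref{eq:commplanar} through Theorem~\ref{thm:classicalstructure}, makes each block $\mathrm{End}(R_i)$ commutative, hence $\dim R_i\le 1$. Specialness \eqref{eq:copycompare} says that the round-trip $\C\rightarrow\mathrm{End}(R_i)\rightarrow\C$ on the $i$-th diagonal entry of $\id_A$ --- the composite of the copy $2$-cell with \eqref{eq:compare} --- equals $\id_{\C}$; this is impossible when $R_i=0$, so each $R_i\neq 0$. Therefore $\dim R_i=1$ for every $i$, giving $R=(\C,\dots,\C)$ and $C=(\C,\dots,\C)^{\top}$. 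With the $1$-cells pinned down, the four structure $2$-cells reduce to scalar data --- the entries of \eqref{eq:create} and the normalisations of copy and compare --- and the $\dag$-relations together with the zigzag and specialness equations determine these up to phases, which are absorbed into a diagonal unitary isomorphism of the witness.

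I expect the main obstacle to be the bookkeeping forced by the intricate horizontal composition of $\cat{2Hilb}$: both for verifying the axioms in the existence half and, more importantly, for confirming that the multiplication induced by \eqref{eq:compare} really is blockwise matrix multiplication on $\bigoplus_i\mathrm{End}(R_i)$, since it is exactly this identification that converts the abstract commutativity and specialness axioms into the numerical constraints $\dim R_i=1$ and $R_i\neq 0$. A secondary point requiring care is making the phrase ``up to unitary isomorphism'' precise, by checking that the residual phase freedom in the normalisation of the four $2$-cells coincides with the freedom of a unitary $2$-isomorphism between witnesses.
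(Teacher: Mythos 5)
Your proposal follows essentially the same route as the paper's proof: start from a general witness $(V_1,\ldots,V_n)$, use the duality axioms to identify the comparison 2\-cell as a counit built from entrywise duality maps, impose commutativity \eqref{eq:commplanar} on the induced Frobenius multiplication of Theorem~\ref{thm:classicalstructure} to force each $V_i$ to be at most one-dimensional, use specialness \eqref{eq:copycompare} to fix the normalisation up to phases absorbed by a unitary isomorphism, and let the zigzag equations \eqref{eq:yank1}--\eqref{eq:yank4} determine the create and delete 2\-cells. If anything you are marginally more careful than the paper, which asserts one-dimensionality from commutativity alone, whereas you correctly separate the two constraints: commutativity gives only $\dim R_i \le 1$, and it is specialness that rules out $R_i = 0$.
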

\begin{proof}
We suppose the witness $\cat{Hilb}^n \sxto n \cat{Hilb}$ has the fully general form $(V_1 \, V_2 \, \cdots \, V_n)$. This induces a comparison 2\-cell~\eqref{eq:compare} as follows, since it is the counit for an adjunction:
\begin{equation}
\hspace{-100pt}
\matrix{
V_1 ^* \otimes V_1 ^\pstar & V_1 ^* \otimes V_2 ^\pstar & \cdots & V_1 ^* \otimes V_n ^\pstar
\\
V_2 ^* \otimes V_1 ^\pstar & V_2 ^* \otimes V_2 ^\pstar & \cdots & V_2 ^* \otimes V_n ^\pstar
\\
\vdots & \vdots & \ddots & \vdots
\\
V_n ^* \otimes V_1 ^\pstar & V_n ^* \otimes V_2 ^\pstar & \cdots & V_n ^* \otimes V_n ^\pstar
}
\xrightarrow{\matrix{
\epsilon _1 & 0 & \cdots & 0
\\
0 & \epsilon_2 & \cdots & 0
\\
\vdots & \vdots & \ddots & \vdots
\\
0 & 0 & \cdots & \epsilon_n
}}
\matrix{\C & 0 & \cdots & 0
\\
0 & \C & \cdots & 0
\\
\vdots & \vdots & \ddots & \vdots
\\
0 & 0 & \cdots & \C}
\hspace{-100pt}
\end{equation}
Here the linear maps $V_i ^* \otimes V_i ^\pstar \sxto {\epsilon_i} \C$ are part of a pair witnessing the duality between the vector spaces $V_i ^\pstar$ and $V_i ^*$.

We now consider the multiplication 2\-cell~\eqref{eq:inducedmultiplication} formed by composing this 2\-cell horizontally with our witnesses. This is a 2\-cell
\begin{equation}
\bigoplus_{i,j} V_i ^\pstar \otimes V_i^* \otimes V_j ^\pstar \otimes V_j ^* \to \bigoplus _k V_k ^\pstar \otimes V_k ^*,
\end{equation}
which for elements $\phi, \chi \in V_i$ and $\psi, \omega \in V_j$ acts in the following way on the $i,j$ summand:
\begin{equation}
\phi \otimes \chi ^* \otimes \psi \otimes \omega ^* \mapsto \delta_{i,j} \, \phi \otimes \epsilon_i (\chi ^* \otimes \psi) \otimes \omega^*
\end{equation}
We now impose requirement~\eqref{eq:commplanar}, which says that this composite must be commutative. The swap operator takes the $i,j$ summand to the $j,i$ summand, swapping the factors $V_i ^\pstar \otimes V_i^*$ and $V_j ^\pstar \otimes V_j ^*$ using the symmetry map of the category of Hilbert spaces. The commutativity law therefore gives the following equation for all $\phi, \chi, \psi, \omega \in V_i$:
\begin{equation}
\phi \otimes \epsilon_i (\chi ^* \otimes \psi) \otimes \omega^* = \psi \otimes \epsilon_i (\omega ^* \otimes \phi) \otimes \chi^*
\end{equation}
Graphically, this has the following representation: \begin{equation}
\begin{aligned}
\begin{tikzpicture}[scale=0.7]
\draw [thick] (0,0)
    to [out=up, in=down, out looseness=1.5] (2,2)
    to [out=up, in=up, looseness=1.5] (1,2)
    to [out=down, in=up, in looseness=1.5] (3,0);
\draw [thick] (1,0)
    to [out=up, in=down, in looseness=1.5] (3,2)
    to [out=up, in=down] (2,4);
\draw [thick] (2,0)
    to [out=up, in=down, in looseness=1.5] (0,2)
    to [out=up, in=down] (1,4);
\end{tikzpicture}
\end{aligned}
\quad=\quad
\begin{aligned}
\begin{tikzpicture}[scale=0.7]
\draw [thick] (3,0)
    to [out=up, in=down, in looseness=1.5] (3,2)
    to [out=up, in=down] (2,4);
\draw [thick] (0,0)
    to [out=up, in=down, in looseness=1.5] (0,2)
    to [out=up, in=down] (1,4);
\draw [thick] (2,0) 
    to (2,2)
    to [out=up, in=up, looseness=1.5] (1,2)
    to (1,0);
\end{tikzpicture}
\end{aligned}
\end{equation}
This can only hold if each vector space $V_i$ is 1\-dimensional.

From the normalization equation~\eqref{eq:copycompare}, we obtain that each of the coefficients $\epsilon_i$ is a unit complex number. Since we work only up to unitary isomorphism, we may choose these all equal to 1. This fixes the form of~\eqref{eq:concretecopy} and~\eqref{eq:concretecompare}. The topological equations~(\ref{eq:yank1})\_(\ref{eq:yank4}) then directly imply the forms of expressions~\eqref{eq:concretecreate} and~\eqref{eq:concretedelete}.
\end{proof}

\noindent
This theorem tells us that the classical data encoded by the object $\cat{Hilb}^n$ is witnessed by a family of $n$ one-dimensional quantum systems. This confirms the fact that it represents the classical $n$-element set in our theory.

\subsubsection*{Measurement}

\begin{theorem}
\label{thm:2hilbnondegenmeasurement}
In \cat{2Hilb}, a nondegenerate measurement on a Hilbert space corresponds  an ordered orthonormal basis.
\end{theorem}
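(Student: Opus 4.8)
The plan is to unwind the type of the measurement $2$-cell using the matrix calculus of Section~\ref{sec:basic2hilb}, after which the statement reduces to an elementary fact about unitary maps. First I would fix, via Theorem~\ref{thm:2hilbwitness}, the canonical witness for the classical data type $A = \cat{Hilb}^n$: the $1$-cell $I \sxto A A$ is the length-$n$ column all of whose entries are $\C$, and $A \sxto A I$ is the corresponding length-$n$ row. I then compute the target of the measurement, the composite $I \sxto A A \sxto A I$, by matrix multiplication. A row of length $n$ composed with a column of length $n$ gives a $1 \times 1$ matrix whose single entry is $\bigoplus_{j=1}^{n} \C \otimes \C \cong \C^{n}$, so this composite is the endomorphism $1$-cell of $I = \cat{Hilb}$ given by the Hilbert space $\C^{n} = \bigoplus_{j=1}^{n} \C_{j}$, carrying the canonical orthonormal basis $(e_{1},\ldots,e_{n})$ in which $e_{j}$ spans the $j$-th summand. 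Crucially, the ordering of this basis is exactly the ordering of the simple objects of $\cat{Hilb}^{n}$ fixed in Section~\ref{sec:basic2hilb}.

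Next I would observe that the source of the measurement is a $1$-cell $I \sxto H I$, that is, the $1 \times 1$ matrix whose entry is the Hilbert space $H$ to be measured. A $2$-cell of the type displayed in~\eqref{eq:measurementdefinition} is therefore nothing more than a single linear map $\mu \colon H \to \C^{n}$. Under the $\dag$-operation on \cat{2Hilb}, which replaces each constituent linear map by its adjoint, the reversal $\mu^{\dag}$ is simply the adjoint map $\C^{n} \to H$. Reading the definitions of isometry and unitarity from Section~\ref{sec:dagger} through the fact that vertical composition is composition of the underlying linear maps, the condition $\mu^{\dag} \cdot \mu = \id$ becomes $\mu^{\dag}\mu = \id_{H}$ and the condition $\mu \cdot \mu^{\dag} = \id$ becomes $\mu\mu^{\dag} = \id_{\C^{n}}$. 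Together these force $\dim H = n$ and make $\mu$ a unitary isomorphism of Hilbert spaces.

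Finally I would exhibit the promised bijection. Given such a unitary $\mu$, the vectors $v_{j} := \mu^{\dag}(e_{j})$ for $j = 1,\ldots,n$ form an ordered orthonormal basis of $H$, since the unitary $\mu^{\dag}$ carries the orthonormal basis $(e_{j})$ of $\C^{n}$ to an orthonormal basis of $H$. Conversely, any ordered orthonormal basis $(v_{1},\ldots,v_{n})$ of $H$ determines the unitary $\mu$ sending each $v_{j}$ to $e_{j}$, and the two assignments are visibly mutually inverse. This identifies the space of nondegenerate measurements on $H$ valued in $A$ with the set of ordered orthonormal bases of $H$; in particular such a measurement exists exactly when $\dim H = n$.

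The only genuine bookkeeping lies in the composite computation of the first paragraph: one must carry out the matrix multiplication of the witness $1$-cells correctly and track which direct summand of $\C^{n}$ corresponds to which classical value, so that the \emph{ordering} of the resulting orthonormal basis is pinned down rather than merely its existence. I do not expect a real obstacle beyond this: once the types are unwound, the ornate graphical form of~\eqref{eq:measurementdefinition} collapses to the single demand that $\mu$ be unitary onto $\C^{n}$, and the remaining correspondence with ordered orthonormal bases is routine linear algebra.
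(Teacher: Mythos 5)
Your proposal is correct and follows essentially the same route as the paper's proof: invoke Theorem~\ref{thm:2hilbwitness} for the canonical witness, compute the composite $\cat{Hilb} \sxto{n} \cat{Hilb}^n \sxto{n} \cat{Hilb}$ in the matrix calculus to obtain $\C \oplus \cdots \oplus \C \cong \C^n$, and identify a unitary $2$-cell $H \to \C^n$ with an ordered orthonormal basis of $H$. Your extra bookkeeping (the explicit bijection $v_j = \mu^\dag(e_j)$, the dimension constraint $\dim H = n$, and the tracking of the ordering) merely spells out what the paper compresses into its final sentence.
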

\begin{proof}
A nondegenerate measurement is a unitary 2\-cell with domain given by some Hilbert space $H$, and target given by the composite $\cat{Hilb} \sxto n \cat{Hilb}^n \sxto n \cat{Hilb}$ of witnesses for the object $\cat{Hilb}^n$. By Theorem~\ref{thm:2hilbwitness}, we can use the matrix calculus to write this composite as a 2\-cell of the form
\begin{equation}
\matrix{\,\C & \C & \cdots & \C \!\!}
\circ
\matrix{\C \\ \C \\ \vdots \\ \C}
\,\,=\,\,
\C \oplus \C \oplus \cdots \oplus \C
\end{equation}
with $n$ summands. A unitary isomorphism between this space and some Hilbert space $H$ exactly comprises an ordered orthonormal basis for $H$.
\end{proof}

\begin{theorem}
\label{thm:2hilbmeasurement}
In \cat{2Hilb}, a general projective measurement corresponds to a decomposition of a Hilbert space into a set of orthogonal subspaces.
\end{theorem}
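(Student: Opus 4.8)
\noindent
The plan is to follow the proof of Theorem~\ref{thm:2hilbnondegenmeasurement} almost verbatim, replacing the trivial one-dimensional witness entries by arbitrary Hilbert spaces. Take the witnessed object to be $\cat{Hilb}^n$, so that Theorem~\ref{thm:2hilbwitness} identifies the witnessing 1\-cells $\cat{Hilb}\sxto n \cat{Hilb}^n$ and $\cat{Hilb}^n \sxto n \cat{Hilb}$ with the column vector and the row vector of copies of $\C$. In the projective measurement~\eqref{eq:arbitrarymeasurement} the source is a 1\-cell $I \sxto T I$, i.e.\ a Hilbert space $H$, and the target is the composite $I \sxto{T_A} A \sxto A I$, where the 1\-cell $I \sxto{T_A} A$ is a functor $\cat{Hilb} \to \cat{Hilb}^n$ and hence an arbitrary column vector $(T_1,\ldots,T_n)^{\mathrm T}$ of finite-dimensional Hilbert spaces.

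Next I would evaluate this target using the matrix calculus of Section~\ref{sec:basic2hilb}. Composing the column vector $(T_1,\ldots,T_n)^{\mathrm T}$ with the row vector $(\C,\ldots,\C)$ of the witness, with $\otimes$ and $\oplus$ playing the roles of product and sum, collapses the target to the single Hilbert space $\bigoplus_{i=1}^n \C \otimes T_i \cong \bigoplus_{i=1}^n T_i$. A unitary 2\-cell in the sense of Section~\ref{sec:dagger} from the source $H$ to this composite is then exactly an ordinary unitary isomorphism $H \xrightarrow{\sim} \bigoplus_i T_i$, since vertical composition and the $\dag$\-operation on 2\-cells are computed entrywise by composition and adjoint of the constituent linear maps.

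Finally I would read off the correspondence. Such a unitary exhibits $H$ as an orthogonal direct sum of the images $H_i$ of the summands $T_i$; these subspaces are mutually orthogonal, span $H$, and satisfy $\dim H_i = \dim T_i$, giving exactly a decomposition of $H$ into a set of orthogonal subspaces. Conversely, any decomposition $H = \bigoplus_i H_i$ supplies the data $T_i := H_i$ together with the identity unitary, so the two notions are in bijection up to unitary isomorphism. I do not expect a genuine obstacle here: the content is entirely in the matrix identification of the target, and the only point needing a little care is to observe that allowing $T_A$ to be an arbitrary column vector, rather than the all\-$\C$ column of Theorem~\ref{thm:2hilbnondegenmeasurement}, is precisely what upgrades an orthonormal basis to an arbitrary orthogonal decomposition.
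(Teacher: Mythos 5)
Your proposal is correct and matches the paper's own proof essentially verbatim: both evaluate the target composite $\cat{Hilb} \sxto F \cat{Hilb}^n \sxto n \cat{Hilb}$ via the matrix calculus to obtain $F_1 \oplus \cdots \oplus F_n$, and identify a unitary 2\-cell from $H$ to this composite with an orthogonal decomposition of $H$ by the subspaces $F_i$. The extra detail you supply (the explicit converse and the entrywise computation of $\dag$ and vertical composition) is a harmless elaboration of the same argument.
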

\begin{proof}
A general projective measurement is defined as a unitary 2\-cell with target given by a composite $\cat{Hilb} \sxto F \cat{Hilb}^n \sxto n \cat{Hilb}$, where $F$ is an arbitrary 1\-cell and $n$ is a 1\-cell acting as a witness for the object $\cat{Hilb}^n$. In the matrix calculus, we can perform this composition in the following way:
\begin{align}
\nonumber
\matrix{\,\C & \C & \cdots & \C \!\!}
\circ
\matrix{F_1 \\ F_2 \\ \vdots \\ F_n}
&\,\,=\,\, (F_1 \otimes \C) \oplus (F_2\otimes \C) \oplus \cdots \oplus (F_n \otimes \C)
\\[-20pt]
&\hspace{40pt}\simeq\,\, F_1 \oplus F_2 \oplus \cdots \oplus F_n
\end{align}
A unitary isomorphism from some Hilbert space $H$ to this space exactly corresponds to an orthogonal decomposition of $H$ by the subspaces $F_i$.
\end{proof}

\subsubsection*{Controlled operations}

\begin{theorem}
\label{thm:controlled}
In \cat{2Hilb}, a controlled operation on $H$ indexed by an object $\cat{Hilb}^n$ corresponds to an independent choice of $n$ endomorphisms of $H$.
\end{theorem}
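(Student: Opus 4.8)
The plan is to read off the type of the controlled-operation 2\-cell~\eqref{eq:controlled} in the matrix calculus of \cat{2Hilb} and observe that its components are exactly an independent family of $n$ endomorphisms of $H$, with no further constraint since the vertex is specified by its type alone.

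First I would identify the source and target 1\-cells of~\eqref{eq:controlled}. The left-hand (thin) strand is the witness $\cat{Hilb}^n \sxto n \cat{Hilb}$, which by Theorem~\ref{thm:2hilbwitness} is the length\-$n$ row matrix $(\C,\ldots,\C)$; the right-hand (thicker) strand is the controlled system $H$, a 1\-cell $\cat{Hilb}\to\cat{Hilb}$ given by the $1\times 1$ matrix $(H)$. Reading the bottom boundary from left to right, the source is the 1\-cell composite (witness)$\,\circ\,(H)$, and since the vertex carries the same $H$ on its lower and upper boundaries, the target is the identical composite. Thus~\eqref{eq:controlled} is an endo\-2\-cell of this composite 1\-cell.

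Next I would compute that composite using the rule from Section~\ref{sec:basic2hilb} that direct sum and tensor product replace addition and multiplication in matrix composition. Composing $(\C,\ldots,\C)$ with $(H)$ yields the $1\times n$ row matrix $(H,\ldots,H)$, each entry being $H\otimes\C\cong H$. By the description of natural transformations in Section~\ref{sec:basic2hilb}, a 2\-cell between two such $1\times n$ matrices is precisely a family of linear maps, one for each entry; here each is an endomorphism $L_i\colon H\to H$, and the $n$ entries are mutually independent. Because the controlled operation is required to satisfy no equations, every tuple $(L_1,\ldots,L_n)$ determines a valid controlled operation and conversely, giving the claimed bijection; the $n$ maps correspond to the $n$ values of the classical datum indexed by $\cat{Hilb}^n$, matching the reading of~\eqref{eq:controlled} as a family of operations on $H$ selected by the controlling system.

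Each step is a direct unwinding of the matrix calculus, so there is little genuine difficulty. The only point requiring care is the correct identification of the 2\-cell's type from the diagram and the verification that composing the witness with $H$ produces a row of $n$ independent copies of $H$ rather than a single entry. I would close by noting that restricting to the unitary case discussed after~\eqref{eq:controlled} simply forces each $L_i$ to be unitary, recovering the familiar notion of a controlled family of unitaries.
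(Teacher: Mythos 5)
Your proposal is correct and takes essentially the same route as the paper: identify the source/target of the vertex as the composite of the witness 1\-cell with $(H)$, evaluate that composite in the matrix calculus to get $n$ copies of $H$, and read off that endo\-2\-cells of this 1\-cell are exactly $n$ independent endomorphisms of $H$ (unitary precisely when each component is, as in the paper's closing remark). The only cosmetic difference is that you produce the $1\times n$ row $(H,\ldots,H)$ where the paper writes the transposed column $\bigl(H,\ldots,H\bigr)^{\mathrm T}$, which reflects nothing more than which witness and composition order one reads off the diagram and does not affect the argument.
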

\begin{proof}
The source and target of a controlled operation in our formalism is a 1\-cell of the the following form:
\begin{calign}
\begin{aligned}
\begin{tikzpicture}
\draw [white] (0,0) to (2,2);
\draw [fill=\fillA, draw=none] (0,0)
    to (1,0)
    to (1,2)
    to (0,2);
\draw [thick] (1,0) to node [auto, swap] {} (1,2);
\draw [thicker] (2,0) to (2,2);
\end{tikzpicture}
\end{aligned}
\end{calign}
Here, the left-hand line is a physical witness for an object $\cat{Hilb}^n$, shaded red in the diagram, and the right-hand line represents some Hilbert space $H$. Writing them out in terms of the matrix calculus, we can evaluate their composite as follows:
\begin{equation}
\matrix{\C \\ \C \\ \vdots \\ \C}
\circ
\matrix {H}
\,\,\simeq\,\,
\matrix{H \\ H \\ \vdots \\ H}
\end{equation}
The column matrices here each have $n$ rows. The space of endomorphisms of this composite is clearly a list of $n$ endomorphisms of $H$.
\end{proof}

\noindent
It is clear that a particular endomorphism of the composite is invertible or unitary iff each individual endomorphism of $H$ is invertible or unitary.

\section{Application to quantum information}
\label{sec:quantuminformation}

\subsection{Introduction}

\subsubsection*{Overview}

The specifications for some quantum protocols can be encoded abstractly as equations between composites of our graphical components. For example, in the introduction we gave the example of teleportation, with the following specification:
\begin{equation}
\label{eq:teleport}
\begin{aligned}
\begin{tikzpicture}
\node (V) [Vertex] at (1,0) {};
\node (W) [Vertex] at (2.25,1) {};
\draw [fill=\fillA, thick] (0,2)
    to [] (0,1)
    to [out=down, in=\nwangle] (1,0)
    to [out=\neangle, in=\swangle] (W.center)
    to [out=\nwangle, in=down] (1.5,2);
\draw [thick] (0.5,-1) to [out=up, in=down] (V.s1) to (V.s2) to [out=down, in=down] (3,0) to [out=up, in=\seangle] (W.center) to [out=\neangle, in=down] (3,2);
\end{tikzpicture}
\end{aligned}
\qquad=\qquad
\begin{aligned}
\begin{tikzpicture}
\draw [fill=\fillA, thick] (0,2)
    to (0,1.5)
    to [out=down, in=down, looseness=1.5] (1.5,1.5)
    to (1.5,2);
\draw [thick] (0.75,-1) to [out=up, in=down] (3,1.5) to (3,2);
\end{tikzpicture}
\end{aligned}
\end{equation}
The vertices represent arbitrary unitary operations of the correct type, so  this is an equation in two unknowns. Implementations of the specification in quantum theory --- that is, actual sets of instructions for carrying out quantum teleportation in the real world --- correspond to solutions to the equation in the 2\-category \cat{2Hilb}. In this section we investigate quantum teleportation, dense coding, complementarity and quantum erasure from this perspective, gaining new insights into their mathematical structure.

\subsubsection*{Computational verification}

Given a suggested solution to a specification --- that is, a suggested implementation of a quantum procedure --- manually verifying whether the relevant graphical equation is satisfied is not necessarily straightforward. While elegant and natural at a high level, the nuts-and-bolts of the required 2\-dimensional algebra can be cumbersome and difficult to manipulate in their raw form, involving matrices of matrices which can be composed in three distinct ways. For this reason, it makes sense to use a computer algebra package to take care of the low-level details.

We use a preview version of \texttt{TwoVect.m}~\cite{r11-2vect}, an add-on to \textit{Mathematica} that implements all the basic operations of the 2\-category of 2\_vector spaces. This is soon to be made publicly available. Using such a package, verifying that a graphical equation is solved by particular choices of 2\-cells becomes an easy calculation, often only requiring a single line of code. In addition, since the fragment of \cat{2Hilb} of relevance to this work is represented in a strict and skeletal way within \texttt{TwoVect.m}, no additional complications are introduced by structural isomorphisms.

For the quantum teleportation, dense coding, complementarity  and quantum erasure examples, we provide code to demonstrate that the conventional implementations provide solutions to the graphical equations we provide. A \textit{Mathematica} notebook containing these calculations is available at the author's website~\cite{v12-highernotebook}. In that notebook, and in places throughout this section, we use the following  standard notation for the basic 1\-cells and 2\-cells of the formalism:
\begin{calign}
\nonumber
\text{Witness}_\text{L}(n)
\begin{aligned}
\begin{tikzpicture}
\draw [fill=\fillA, draw=none] (0,0)
    to (-1,0)
    to (-1,2)
    to (0,2);
\draw [thick] (-1,0) to node [auto] {} (-1,2);
    \node [fill=white, rounded corners=1pt, inner sep=1pt] at (-0.5,1) {$n$};
\end{tikzpicture}
\end{aligned}
&
\begin{aligned}
\begin{tikzpicture}
\draw [fill=\fillA, draw=none] (0,0)
    to (1,0)
    to (1,2)
    to (0,2);
\draw [thick] (1,0) to node [auto, swap] {} (1,2);
    \node [fill=white, rounded corners=1pt, inner sep=1pt] at (0.5,1) {$n$};
\end{tikzpicture}
\end{aligned}
\text{Witness}_\text{R}(n)
\end{calign}
\def\aascale{1.4}
\def\aaspace{\hspace{30pt}}
\setlength\fboxsep{0pt}
\def\sep{5pt}
\def\innersep{3pt}
\def\littlegap{12pt}
\def\boxmargin{0.15cm}
\renewcommand{\centerdia}[1]{#1}
\renewcommand\newtwocell[2]{\begin{aligned}
\begin{tikzpicture}[scale=\aascale]
    #1
    \draw [black!20]
        ([xshift=-\boxmargin, yshift=-\boxmargin] current bounding box.south west)
        rectangle
        ([xshift=\boxmargin, yshift=\boxmargin] current bounding box.north east);
\end{tikzpicture}
\end{aligned}
\hspace{10pt} \makebox[80pt][l]{\vc{#2}}}
\renewcommand\separatetwocells{\\[\sep]}
\allowdisplaybreaks[2]
\begin{calign}
\nonumber
\newtwocell{
    \draw [fill=\fillA, draw=none] (0.2,-0.5)
        to (0.5,-0.5)
        to [out=down, in=down, looseness=1.5] (1.5, -0.5)
        to (1.8,-0.5)
        to (1.8,-1.5)
        to (0.2,-1.5)
        to (0.2,-0.5);
    \draw [thick] (0.5,-0.5)
        to [out=down, in=down, looseness=1.5] (1.5,-0.5);
    \node [fill=white, rounded corners=1pt, inner sep=1pt] at (1,-1.2) {$n$};
}
{Copy($n$)}
\hspace{\littlegap}&\hspace{\littlegap}
\newtwocell{
    \draw [fill=\fillA, draw=none] (0.2,0.5)
        to (0.5,0.5)
        to [out=up, in=up, looseness=1.5] (1.5,0.5)
        to (1.8,0.5)
        to (1.8,1.5)
        to (0.2,1.5)
        to (0.2,0.5);
    \draw [thick] (0.5,0.5)
        to [out=up, in=up, looseness=1.5] (1.5,0.5);
    \node [fill=white, rounded corners=1pt, inner sep=1pt] at (1,1.2) {$n$};
}
{Compare($n$)}
\separatetwocells
\nonumber
\newtwocell{
    \draw [white] (0.2,1) to (1.8,1);
    \draw [fill=\fillA, thick] (0.5,1.5)
        to [out=down, in=down, looseness=1.5] (1.5,1.5);
    \draw [thick, white] (1,0.5) to (1,1);
    \node [fill=white, rounded corners=1pt, inner sep=1pt] at (1,1.3) {$n$};
}
{Create($n$)}
\hspace{\littlegap}&\hspace{\littlegap}
\newtwocell{
    \draw [white] (0.2,-1) to (1.8,-1);
    \draw [fill=\fillA, thick] (0.5,-1.5)
        to [out=up, in=up, looseness=1.5] (1.5,-1.5);
    \draw [thick, white] (1,-0.5) to (1,-1);
    \node [fill=white, rounded corners=1pt, inner sep=1pt] at (1,-1.3) {$n$};
}
{Delete($n$)}
\end{calign}
The label $n$ here indicates that the region corresponds to the object $\cat{Hilb}^n$ in \cat{2Hilb}, playing the role of a piece of classical data that can take $n$ values. The 2\-cells corresponding to these these components in \cat{2Hilb} are as described in Theorem~\ref{thm:2hilbwitness}. Also, we write $Q$ for the Hilbert space $\mathbb{C}^2$ seen as a 1\-cell of type $\cat{Hilb} \to \cat{Hilb}$ in \cat{2Hilb}.

\subsection{Mathematical aspects}
\label{sec:mathematicalaspects}

\subsubsection*{Introduction}

Here we introduce two mathematical developments which are useful for our analysis later in this section.

\subsubsection*{The theory of a specification}

Given the specification for a quantum procedure, we define the \textit{theory} of the specification to be the free symmetric monoidal 2\-category on generators given by the components used in the specification, modulo the relation given by the specification equation itself. A good reference for the theory of presentations of monoidal 2\-categories by generators and relations is the thesis of Schommer-Pries~\cite{sp11-thesis}.

This formalism has the benefit of making implementations of specifications easy to talk about using the language of category theory. For the theory $\cat T$ of some specification S, an implementation is precisely a symmetric monoidal 2\-functor $F$ of the following type:
\begin{equation}
\label{eq:implementationfunctor}
\cat T_{\text{S}} \sxto {F} \cat{2Hilb}
\end{equation}
In this way, the study of implementations of quantum protocols becomes a part of representation theory. In particular, we are close to the definition of a TQFT, as a symmetric monoidal $n$-functor
\begin{equation}
\cat{nCob} \sxto Z \cat{nHilb}.
\end{equation}
Exploring these connections has the potential to lead to closer relationships between representation theory and quantum information.

Working with theories for specifications also gives a natural way to describe equivalences between specifications. Taken together, Theorems~\ref{thm:teleportationdoublyunitary} and~\ref{thm:densecodingdoublyunitary} later in this section show that implementations of dense coding and teleportation are equivalent. Formally, the most elegant way to say this is that we obtain a \emph{symmetric monoidal equivalence} between the theories of teleportation and dense coding:
\begin{equation}
\begin{aligned}
\begin{tikzpicture}
\node (T) at (0,0) {$\cat{T} _{\text{T}}$};
\node (DC) at (2,0) {$\cat{T} _{\text{DC}}$};
\draw [->] (T) to [out=45, in=135] (DC);
\draw [->] (DC) to [out=-135, in=-45] (T);
\node at (1,0) {$\simeq$};
\end{tikzpicture}
\end{aligned}
\end{equation}
This allows us to convert an implementation of teleportation into an implementation of dense coding, and vice-versa, by pre-composing the appropriate equivalence with the implementation functor~\eqref{eq:implementationfunctor}.

\subsubsection*{Horizontal invertibility}

Suppose we have a 2\-cell of the following form:
\def\maxheight{0.8}
\begin{calign}
\label{eq:example2cell}
\begin{aligned}
\begin{tikzpicture}[thick]
\begin{pgfonlayer}{foreground}
\node (f) [draw, minimum width=30pt, thick, fill=white] at (0,0) {$f$};
\end{pgfonlayer}
\draw [fill=\fillD, draw=none] (0,\maxheight -| f.40) to (1,\maxheight) to (1,-\maxheight) to (0,-\maxheight -| f.40);
\draw [fill=\fillA, draw=none] (0,\maxheight -| f.140) to (-1,\maxheight) to (-1,-\maxheight) to (0,-\maxheight -| f.140);
\draw [fill=\fillB, draw=none]  (f.40) to (0,\maxheight -| f.40) to (0,\maxheight -| f.140) to (f.140);
\draw [fill=\fillC, draw=none]  (f.-40) to (0,-\maxheight -| f.-40) to (0,-\maxheight -| f.-140) to (f.-140);
\draw (f.40) to (0,\maxheight -| f.40) node [above] {$G$};
\draw (f.140) to (0,\maxheight -| f.140) node [above] {$F$};
\draw (f.-40) to (0,-\maxheight -| f.-40) node [below] {$J$};
\draw (f.-140) to (0,-\maxheight -| f.-140) node [below] {$H$};
\end{tikzpicture}
\end{aligned}
\ignore{
&
\begin{aligned}
\begin{tikzpicture}[thick]
\begin{pgfonlayer}{foreground}
\node (f) [draw, minimum width=30pt, thick, fill=white] at (0,0) {$g\vphantom{f}$};
\end{pgfonlayer}
\draw [fill=\fillD, draw=none] (0,\maxheight -| f.40) to (1,\maxheight) to (1,-\maxheight) to (0,-\maxheight -| f.40);
\draw [fill=\fillA, draw=none] (0,\maxheight -| f.140) to (-1,\maxheight) to (-1,-\maxheight) to (0,-\maxheight -| f.140);
\draw [fill=\fillC, draw=none]  (f.40) to (0,\maxheight -| f.40) to (0,\maxheight -| f.140) to (f.140);
\draw [fill=\fillB, draw=none]  (f.-40) to (0,-\maxheight -| f.-40) to (0,-\maxheight -| f.-140) to (f.-140);
\draw (f.40) to (0,\maxheight -| f.40) node [above] {$J$};
\draw (f.140) to (0,\maxheight -| f.140) node [above] {$H$};
\draw (f.-40) to (0,-\maxheight -| f.-40) node [below] {$G$};
\draw (f.-140) to (0,-\maxheight -| f.-140) node [below] {$F$};
\end{tikzpicture}
\end{aligned}}
\end{calign}
Then it is invertible if there exists a 2\-cell $g$ of the opposite type such that the following equations are satisfied:
\begin{calign}
\begin{aligned}
\begin{tikzpicture}[thick]
\begin{pgfonlayer}{foreground}
\node (f) [draw, minimum width=30pt, thick, fill=white] at (0,0) {$f$};
\node (g) [draw, minimum width=30pt, thick, fill=white] at (0,1.1) {$g\vphantom{f}$};
\end{pgfonlayer}
\draw [fill=\fillD, draw=none] (0,1.1+\maxheight -| f.40) to (1,1.1+\maxheight) to (1,-\maxheight) to (0,-\maxheight -| f.40);
\draw [fill=\fillA, draw=none] (0,1.1+\maxheight -| f.140) to (-1,1.1+\maxheight) to (-1,-\maxheight) to (0,-\maxheight -| f.140);
\draw [fill=\fillC, draw=none]  (0,1.1+\maxheight -| f.-40) to (0,-\maxheight -| f.-40) to (0,-\maxheight -| f.-140) to (0,1.1+\maxheight -| f.-140);
\draw [fill=\fillB, draw=none]  (f.40) to (0,\maxheight -| f.40) to (0,\maxheight -| f.140) to (f.140);
\draw (f.40) to (0,\maxheight -| f.40);
\draw (f.140) to (0,\maxheight -| f.140);
\draw (g.40) to (0,1.1+\maxheight -| g.40);
\draw (g.140) to (0,1.1+\maxheight -| g.140);
\draw (f.-40) to (0,-\maxheight -| f.-40);
\draw (f.-140) to (0,-\maxheight -| f.-140);
\end{tikzpicture}
\end{aligned}
\,\,\,=\,\,\,
\begin{aligned}
\begin{tikzpicture}[thick]
\draw [fill=\fillD, draw=none] (0,1.1+\maxheight -| f.40) to (1,1.1+\maxheight) to (1,-\maxheight) to (0,-\maxheight -| f.40);
\draw [fill=\fillA, draw=none] (0,1.1+\maxheight -| f.140) to (-1,1.1+\maxheight) to (-1,-\maxheight) to (0,-\maxheight -| f.140);
\draw [fill=\fillC, draw=none]  (0,1.1+\maxheight -| f.-40) to (0,-\maxheight -| f.-40) to (0,-\maxheight -| f.-140) to (0,1.1+\maxheight -| f.-140);
\draw (0,-\maxheight -| f.40) to (0,1.1+\maxheight -| f.40);
\draw (0,-\maxheight -| f.140) to (0,1.1+\maxheight -| f.140);
\end{tikzpicture}
\end{aligned}
&
\begin{aligned}
\begin{tikzpicture}[thick]
\begin{pgfonlayer}{foreground}
\node (f) [draw, minimum width=30pt, thick, fill=white] at (0,0) {$g\vphantom{f}$};
\node (g) [draw, minimum width=30pt, thick, fill=white] at (0,1.1) {$f$};
\end{pgfonlayer}
\draw [fill=\fillD, draw=none] (0,1.1+\maxheight -| f.40) to (1,1.1+\maxheight) to (1,-\maxheight) to (0,-\maxheight -| f.40);
\draw [fill=\fillA, draw=none] (0,1.1+\maxheight -| f.140) to (-1,1.1+\maxheight) to (-1,-\maxheight) to (0,-\maxheight -| f.140);
\draw [fill=\fillB, draw=none]  (0,1.1+\maxheight -| f.-40) to (0,-\maxheight -| f.-40) to (0,-\maxheight -| f.-140) to (0,1.1+\maxheight -| f.-140);
\draw [fill=\fillC, draw=none]  (f.40) to (0,\maxheight -| f.40) to (0,\maxheight -| f.140) to (f.140);
\draw (f.40) to (0,\maxheight -| f.40);
\draw (f.140) to (0,\maxheight -| f.140);
\draw (g.40) to (0,1.1+\maxheight -| g.40);
\draw (g.140) to (0,1.1+\maxheight -| g.140);
\draw (f.-40) to (0,-\maxheight -| f.-40);
\draw (f.-140) to (0,-\maxheight -| f.-140);
\end{tikzpicture}
\end{aligned}
\,\,\,=\,\,\,
\begin{aligned}
\begin{tikzpicture}[thick]
\draw [fill=\fillD, draw=none] (0,1.1+\maxheight -| f.40) to (1,1.1+\maxheight) to (1,-\maxheight) to (0,-\maxheight -| f.40);
\draw [fill=\fillA, draw=none] (0,1.1+\maxheight -| f.140) to (-1,1.1+\maxheight) to (-1,-\maxheight) to (0,-\maxheight -| f.140);
\draw [fill=\fillB, draw=none]  (0,1.1+\maxheight -| f.-40) to (0,-\maxheight -| f.-40) to (0,-\maxheight -| f.-140) to (0,1.1+\maxheight -| f.-140);
\draw (0,-\maxheight -| f.40) to (0,1.1+\maxheight -| f.40);
\draw (0,-\maxheight -| f.140) to (0,1.1+\maxheight -| f.140);
\end{tikzpicture}
\end{aligned}
\end{calign}
This sort of invertibility is a standard concept in 2\-category theory. Since the 2\-cells are connected vertically, we could also call this \emph{vertical invertibility}; the 2\-cells $f$ and $g$ are then \emph{vertical inverses}.

However, in a 2\-category we can also stack morphisms side-by-side. This suggests another notion of invertibility. We define a 2\-cell of the form~\eqref{eq:example2cell} to be \emph{horizontally invertible} if there exist adjunctions ${}^* \! F \dashv F$, ${}^* \! J \dashv J$, $G \dashv G^*$ and $H \dashv H^*$, and a 2\-cell of the form
\begin{equation}
\begin{aligned}
\begin{tikzpicture}[thick]
\begin{pgfonlayer}{foreground}
\node (f) [draw, minimum width=30pt, thick, fill=white] at (0,0) {$f'$};
\end{pgfonlayer}
\draw [fill=\fillA, draw=none] (0,\maxheight -| f.40) to (1,\maxheight) to (1,-\maxheight) to (0,-\maxheight -| f.40);
\draw [fill=\fillD, draw=none] (0,\maxheight -| f.140) to (-1,\maxheight) to (-1,-\maxheight) to (0,-\maxheight -| f.140);
\draw [fill=\fillB, draw=none]  (f.40) to (0,\maxheight -| f.40) to (0,\maxheight -| f.140) to (f.140);
\draw [fill=\fillC, draw=none]  (f.-40) to (0,-\maxheight -| f.-40) to (0,-\maxheight -| f.-140) to (f.-140);
\draw (f.40) to (0,\maxheight -| f.40) node [above] {${}^* \! F$};
\draw (f.140) to (0,\maxheight -| f.140) node [above] {$G^*$};
\draw (f.-40) to (0,-\maxheight -| f.-40) node [below] {$H^*$};
\draw (f.-140) to (0,-\maxheight -| f.-140) node [below] {${}^* \! J$};
\end{tikzpicture}
\end{aligned}
\end{equation}
such that the following equations hold:
\def\maxheight{1}
\begin{calign}
\begin{aligned}
\begin{tikzpicture}[thick]
\begin{pgfonlayer}{foreground}
\node (f1) [draw, minimum width=30pt, thick, fill=white] at (-2,0) {$f$};
\node (f2) [draw, minimum width=30pt, thick, fill=white] at (0,0) {$f'$};
\end{pgfonlayer}
\draw [fill=\fillA, draw=none] (0,\maxheight -| f2.40) to (1,\maxheight) to (1,-\maxheight) to (0,-\maxheight -| f2.40);
\draw [fill=\fillB, draw=none]  (f2.40) to (0,\maxheight -| f2.40) to (0,\maxheight -| f1.140) to (f1.140);
\draw [fill=\fillC, draw=none]  (f2.-40) to (0,-\maxheight -| f2.-40) to (0,-\maxheight -| f1.-140) to (f1.-140);
\draw [fill=\fillA, draw=none] (0,\maxheight -| f1.140) to (-3,\maxheight) to (-3,-\maxheight) to (0,-\maxheight -| f1.140);
\draw [fill=\fillD, draw=none] (f1.40)
    to [out=up, in=up] (f2.140)
    to (f2.-140)
    to [out=down, in=down] (f1.-40);
\draw (f2.-140) to [out=down, in=down] (f1.-40);
\draw (f2.140) to [out=up, in=up] (f1.40);
\draw (f2.40) to (0,\maxheight -| f2.40);
\draw (f2.-40) to (0,-\maxheight -| f2.-40);
\draw (f1.140) to (0,\maxheight -| f1.140);
\draw (f1.-140) to (0,-\maxheight -| f1.-140);
\end{tikzpicture}
\end{aligned}
\quad=\quad
\begin{aligned}
\begin{tikzpicture}[thick]
\draw [fill=\fillA, draw=none] (-3,\maxheight) to (1,\maxheight) to (1,-\maxheight) to (-3,-\maxheight);
\draw [fill=\fillC] (0,-\maxheight -| f2.-40)
    to [out=up, in=up, looseness=0.8] (0,-\maxheight -| f1.-140);
\draw [fill=\fillB] (0,\maxheight -| f2.-40)
    to [out=down, in=down, looseness=0.8] (0,\maxheight -| f1.-140);
\end{tikzpicture}
\end{aligned}
\\
\begin{aligned}
\begin{tikzpicture}[thick]
\begin{pgfonlayer}{foreground}
\node (f1) [draw, minimum width=30pt, thick, fill=white] at (-2,0) {$f'$};
\node (f2) [draw, minimum width=30pt, thick, fill=white] at (0,0) {$f$};
\end{pgfonlayer}
\draw [fill=\fillD, draw=none] (0,\maxheight -| f2.40) to (1,\maxheight) to (1,-\maxheight) to (0,-\maxheight -| f2.40);
\draw [fill=\fillB, draw=none]  (f2.40) to (0,\maxheight -| f2.40) to (0,\maxheight -| f1.140) to (f1.140);
\draw [fill=\fillC, draw=none]  (f2.-40) to (0,-\maxheight -| f2.-40) to (0,-\maxheight -| f1.-140) to (f1.-140);
\draw [fill=\fillD, draw=none] (0,\maxheight -| f1.140) to (-3,\maxheight) to (-3,-\maxheight) to (0,-\maxheight -| f1.140);
\draw [fill=\fillA, draw=none] (f1.40)
    to [out=up, in=up] (f2.140)
    to (f2.-140)
    to [out=down, in=down] (f1.-40);
\draw (f2.-140) to [out=down, in=down] (f1.-40);
\draw (f2.140) to [out=up, in=up] (f1.40);
\draw (f2.40) to (0,\maxheight -| f2.40);
\draw (f2.-40) to (0,-\maxheight -| f2.-40);
\draw (f1.140) to (0,\maxheight -| f1.140);
\draw (f1.-140) to (0,-\maxheight -| f1.-140);
\end{tikzpicture}
\end{aligned}
\quad=\quad
\begin{aligned}
\begin{tikzpicture}[thick]
\draw [fill=\fillD, draw=none] (-3,\maxheight) to (1,\maxheight) to (1,-\maxheight) to (-3,-\maxheight);
\draw [fill=\fillC] (0,-\maxheight -| f2.-40)
    to [out=up, in=up, looseness=0.8] (0,-\maxheight -| f1.-140);
\draw [fill=\fillB] (0,\maxheight -| f2.-40)
    to [out=down, in=down, looseness=0.8] (0,\maxheight -| f1.-140);
\end{tikzpicture}
\end{aligned}
\end{calign}

\noindent
In this case we say that $f$ and $f'$ are \textit{horizontal inverses}. Just as with ordinary inverses, the existence of a horizontal inverse to a 2\-cell of a given composite type is a property, not a structure. However, if a horizontal inverse exists, its identity depends up to isomorphism on the choice of duality 2\-cells used to witness the relevant adjunctions of 1\-cells. This concept is also interesting to study in 1\-object 2\-categories, better known as monoidal categories.

\begin{lemma}
\label{lem:horizontalfromvertical}
In a 2\-category, a 2\-cell
\begin{calign}
\begin{aligned}
\begin{tikzpicture}[thick]
\begin{pgfonlayer}{foreground}
\node (f) [draw, minimum width=30pt, thick, fill=white] at (0,0) {$f$};
\end{pgfonlayer}
\draw [fill=\fillD, draw=none] (0,\maxheight -| f.40) to (1,\maxheight) to (1,-\maxheight) to (0,-\maxheight -| f.40);
\draw [fill=\fillA, draw=none] (0,\maxheight -| f.140) to (-1,\maxheight) to (-1,-\maxheight) to (0,-\maxheight -| f.140);
\draw [fill=\fillB, draw=none]  (f.40) to (0,\maxheight -| f.40) to (0,\maxheight -| f.140) to (f.140);
\draw [fill=\fillC, draw=none]  (f.-40) to (0,-\maxheight -| f.-40) to (0,-\maxheight -| f.-140) to (f.-140);
\draw (f.40) to (0,\maxheight -| f.40) node [above] {$G$};
\draw (f.140) to (0,\maxheight -| f.140) node [above] {$F$};
\draw (f.-40) to (0,-\maxheight -| f.-40) node [below] {$J$};
\draw (f.-140) to (0,-\maxheight -| f.-140) node [below] {$H$};
\end{tikzpicture}
\end{aligned}
\end{calign}
is horizontally invertible iff one, and hence both, of the following composites are vertically invertible:
\def\maxheight{1.1}
\begin{calign}
\label{eq:bentexamples}
\begin{aligned}
\begin{tikzpicture}[thick]
\begin{pgfonlayer}{foreground}
\node (f) [draw, minimum width=30pt, thick, fill=white] at (0,0) {$f$};
\end{pgfonlayer}
\draw [fill=\fillB, draw=none]  (-1.5,\maxheight) rectangle (f.40 |- 0,-\maxheight);
\draw [fill=\fillC, draw=none]  (f.-140) rectangle (1.5,-\maxheight);
\draw [fill=\fillC, draw=none]  (0,\maxheight -| f.40) rectangle (1.5,-\maxheight);
\draw [fill=\fillD, draw=none] (0,\maxheight -| f.40)
    to (1,\maxheight)
    to (1,0 |- f.-40)
    to [out=down, in=down, looseness=2] (f.-40);
\draw [fill=\fillA, draw=none] (f.140)
    to [out=up, in=up, looseness=2] (-1,0 |- f.140)
    to (-1,-\maxheight)
    to (0,-\maxheight -| f.140);
\draw (f.40) to (0,\maxheight -| f.40) node [above] {$G$};
\draw (f.140)
    to [out=up, in=up, looseness=2] (-1,0 |- f.140)
    to (-1,-\maxheight)
        node [below] {$F^*$};
\draw (f.-40)
    to [out=down, in=down, looseness=2] (1,0 |- f.-40)
    to (1,\maxheight)
       node [above] {$J^*$};
\draw (f.-140) to (0,-\maxheight -| f.-140) node [below] {$H$};
\end{tikzpicture}
\end{aligned}
&
\begin{aligned}
\begin{tikzpicture}[thick, yscale=-1]
\begin{pgfonlayer}{foreground}
\node (f) [draw, minimum width=30pt, thick, fill=white] at (0,0) {$f$};
\end{pgfonlayer}
\draw [fill=\fillC, draw=none] (-1.5,\maxheight)
    rectangle (f.-40 |- 0,-\maxheight);
\draw [fill=\fillB, draw=none] (f.140) rectangle (1.5,-\maxheight);
\draw [fill=\fillB, draw=none] (0,\maxheight -| f.-40)
    rectangle (1.5,-\maxheight);
\draw [fill=\fillD, draw=none] (0,\maxheight -| f.-40)
    to (1,\maxheight)
    to (1,0 |- f.40)
    to [out=down, in=down, looseness=2] (f.40);
\draw [fill=\fillA, draw=none] (f.-140)
    to [out=up, in=up, looseness=2] (-1,0 |- f.-140)
    to (-1,-\maxheight)
    to (0,-\maxheight -| f.-140);
\draw (f.40) to (0,\maxheight -| f.-40) node [below] {$J$};
\draw (f.-140)
    to [out=up, in=up, looseness=2] (-1,0 |- f.-140)
    to (-1,-\maxheight)
        node [above] {$H^*$};
\draw (f.40)
    to [out=down, in=down, looseness=2] (1,0 |- f.40)
    to (1,\maxheight)
       node [below] {$G^*$};
\draw (f.140) to (0,-\maxheight -| f.140) node [above] {$F$};
\end{tikzpicture}
\end{aligned}
\end{calign}
\end{lemma}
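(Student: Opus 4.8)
The plan is to reduce \emph{horizontal} invertibility of $f$ to ordinary \emph{vertical} invertibility of its bent form by means of the mates correspondence for the four given adjunctions. Write $\hat f$ for the first of the two 2-cells displayed in~\eqref{eq:bentexamples}, obtained from $f$ in~\eqref{eq:example2cell} by bending the $F$-leg and the $J$-leg around with the unit and counit 2-cells of the adjunctions ${}^*F \dashv F$ and ${}^*J \dashv J$, while leaving the $G$- and $H$-legs in place. I claim that a horizontal inverse of $f$ exists if and only if $\hat f$ admits a two-sided vertical inverse.

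First I would set up the bending bijection. Using the units and counits of all four adjunctions, bending is a hom-set bijection: it sends a candidate horizontal inverse $f'$ (of the type fixed in the definition of horizontal invertibility) to a 2-cell $\hat{f'}$ whose source and target are exactly the target and source of $\hat f$, and conversely any 2-cell of that opposite type unbends to a 2-cell of the type of $f'$. That these two operations are mutually inverse is precisely the content of the triangle (zig-zag) identities of the adjunctions, applied one strand at a time. Hence horizontal inverses of $f$ are in bijection with candidate vertical inverses of $\hat f$.

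The central step is to check that this bijection carries the two defining equations of a horizontal inverse into the two equations $\hat{f'} \cdot \hat f = \id$ and $\hat f \cdot \hat{f'} = \id$. Bending both factors of the glued horizontal composite on the left-hand side of the first horizontal-invertibility equation turns that composite into the vertical composite $\hat{f'} \cdot \hat f$: one slides the box $f'$ along the bent strands and straightens the doubled-back $F$- and $J$-wires using the snake equations of the corresponding adjunctions, the whole manipulation being legitimized by the interchange law for 2-cells. Under the same bending, the cup/cap right-hand side straightens to the identity 2-cell on the source of $\hat f$. The second horizontal-invertibility equation becomes $\hat f \cdot \hat{f'} = \id$ in the same way. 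Combining this with the bijection of the previous paragraph shows that $f'$ is a horizontal inverse of $f$ exactly when $\hat{f'}$ is a two-sided vertical inverse of $\hat f$, which proves the asserted equivalence for the first diagram of~\eqref{eq:bentexamples}.

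Finally, for the ``one, and hence both'' clause, I would run the identical argument with the roles of the pairs $(F,J)$ and $(G,H)$ interchanged: bending instead the $G$- and $H$-legs, using $G \dashv G^*$ and $H \dashv H^*$, produces the second 2-cell of~\eqref{eq:bentexamples}, and the same reasoning shows that its vertical invertibility is equivalent to horizontal invertibility of $f$. Since both bent 2-cells are thereby equivalent to the \emph{same} condition on $f$, each is vertically invertible precisely when the other is. The main obstacle is the central step: although pulling the bent wires straight is visually obvious in the graphical calculus, making it rigorous requires careful bookkeeping of which unit and counit bends which leg, and a disciplined use of the snake identities together with interchange, rather than any genuinely hard idea.
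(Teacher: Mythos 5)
Your proof is correct and is essentially the paper's own argument: the paper simply declares the lemma ``immediate by applying the string diagram calculus for 2\-categories,'' and your mates/bending bijection with the snake identities and interchange is exactly what that one-line proof tacitly invokes, just written out in full.
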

\begin{proof}
Immediate by applying the string diagram calculus for 2\-categories.
\end{proof}

If our 2-category is equipped with a $\dag$-operation which is coherent with the rest of the structure, as described at the end of Section~\ref{sec:dagger}, we can extend this definition in a natural way, by defining a 2\-cell of the form~\eqref{eq:example2cell} to be \textit{horizontally unitary} if one of the composites in~\eqref{eq:bentexamples}, and hence both, are vertically unitary.

\subsection{Quantum teleportation}
\def\Bell{\ensuremath{\psi_\text{Bell}}}
\def\Create{\text{Create}}
\def\Copy{\text{Copy}}
\def\Wa{\ensuremath{\textrm{W}_\text{L}}}
\def\Wb{\ensuremath{\textrm{W}_\text{R}}}
\def\Q{\textrm{Q}}
\def\xs{\hspace*{3pt}}

\label{sec:conventionalteleportation}

\subsubsection*{Conventional teleportation}
In this section we  analyze quantum teleportation. First introduced in 1993~\cite{b93-teleport}, it is one of the most important quantum procedures. It is specified in our framework by the following graphical identity, as given in the introduction:
\begin{equation}
\label{eq:mainteleport}
\begin{aligned}
\begin{tikzpicture}
\node (V) [Vertex] at (1,0) {};
\node (W) [Vertex] at (2.25,1) {};
\draw [fill=\fillA, thick] (0,2)
    to [] (0,1)
    to [out=down, in=\nwangle] (1,0)
    to [out=\neangle, in=\swangle] (W.center)
    to [out=\nwangle, in=down] (1.5,2);
\draw [thick] (0.5,-1) to [out=up, in=down] (V.s1) to (V.s2) to [out=down, in=down] (3,0) to [out=up, in=\seangle] (W.center) to [out=\neangle, in=down] (3,2);
\end{tikzpicture}
\end{aligned}
\qquad=\hspace{15pt}
\frac{1}{\sqrt{n}}\,
\begin{aligned}
\begin{tikzpicture}
\draw [fill=\fillA, thick] (0,2)
    to (0,1.5)
    to [out=down, in=down, looseness=1.5] (1.5,1.5)
    to (1.5,2);
\draw [thick] (0.75,-1) to [out=up, in=down] (3,1.5) to (3,2);
\end{tikzpicture}
\end{aligned}
\end{equation}
On the left-hand side of the equation a system is introduced, an entangled state is prepared, a measurement is performed, and a controlled operation is  executed. The right-hand diagram is much simpler: the original system  exits without interacting, and the classical data is uniformly-generated and disconnected from the free quantum system. The statement that teleportation works is exactly the assertion that a solution to this equation can be found in the 2\-category~\cat{2Hilb}, using components which satisfy the axioms of our formalism.

We now demonstrate that ordinary qubit teleportation gives a solution in this sense. First, we rewrite our specification to put it in a more convenient form.
\begin{equation}
\label{eq:teleportexplicit}
\begin{aligned}
\begin{tikzpicture}[scale=1.2, thick]
    \begin{pgfonlayer}{foreground}
    \node (U) [minimum width=1.7cm, minimum height=18pt, fill=white, draw=black, thick] at (2,2.0) {$U _\text{Bell}$};
    \node (M) [minimum width=1.7cm, minimum height=18pt, fill=white, draw=black, thick, anchor=center] at (1.0,1.0) {$M _\text{Bell}$};
    \node (Bell) [minimum width=1.7cm, minimum height=18pt, fill=white, draw=black, anchor=center, thick] at (2,0.0) {$\Bell$};
    \end{pgfonlayer}
    \draw [fill=\fillA] (0.5,3)
        to (0.5,1.0)
        to (1.5,1.0)
        to (1.5,3);
    \draw (0.5,-1) to (0.5,1.5);
    \draw (1.5,1.6 |- M.south)
        to [out=down, in=up, looseness=1.5]
        (1.5,0.0);
    \draw (2.5,0.0) to (2.5,3.0);
    \node [fill=white, rounded corners=1pt, inner sep=1pt] at (1,2.1) {4};
\end{tikzpicture}
\end{aligned}
\quad=\quad
\frac{1}{2}
\,
\begin{aligned}
\begin{tikzpicture}[scale=1.2, thick]
    \begin{pgfonlayer}{foreground}
    \node (M) [minimum width=1.7cm, minimum height=18pt, fill=white, draw=black, thick, anchor=center] at (1.0,1.5) {$\Create(4)$};
    \end{pgfonlayer}
    \draw [fill=\fillA] (0.5,3.5)
        to (0.5,1.5)
        to (1.5,1.5)
        to (1.5,3.5);
    \draw (2.5,-0.5) to (2.5,3.5);
    \node [fill=white, rounded corners=1pt, inner sep=1pt] at (1,2.6) {4};
\end{tikzpicture}
\end{aligned}
\end{equation}
Since the measurement operation for qubit teleportation has 4 outcomes, we label the regions of classical data with the number 4, indicating that they correspond to the object $\cat{Hilb}^4$ in \cat{2Hilb}. We label creation of the Bell state as ``\Bell'', the measurement operation as ``$M _\text{Bell}$'', the controlled unitary as ``$U _\text{Bell}$'', and the creation of 4\-valued uniform classical data as ``\Create(4)''. The factor of $\frac{1}{2}$ on the right-hand side ensures that the process of creating uniform classical data is an isometry. 

To proceed, we must define a 2\-cell in \cat{2Hilb} for each of the four components $M_\text{Bell}$, $U_\text{Bell}$, and \Bell{}, the 2\-cell representing $\Create(4)$ having been already defined in Theorem~\ref{thm:2hilbwitness}.

The 2\-cell \Bell{} represents the creation of the Bell state $\smash{\frac{1}{\sqrt{2}}}(\ket {00} + \ket{11})$ as an element of $\C^2 \otimes \C^2$, the state space of two qubits. It has the following explicit form: 
\begin{align}
\Bell \,&=\, \matrix{ {\textstyle \frac{1}{\sqrt{2}}} \matrix{1 \\ 0 \\ 0 \\ 1}}
\intertext{\parbox{\linewidth}{
This is a 1-by-1 matrix, the single element of which is a linear map $\C \to \C ^2 \otimes \C ^2$.
\\
\hspace*{15pt}We then perform a measurement in the Bell basis, which comprises the following states:}}
\nonumber
\ket{\rBell_1} &= {\textstyle \frac{1}{\sqrt{2}}} \big( \ket{00} + \ket{11} \big)
\\
\nonumber
\ket{\rBell_2} &= {\textstyle \frac{1}{\sqrt{2}}} \big( \ket{00} - \ket{11} \big)
\\
\nonumber
\ket{\rBell_3} &= {\textstyle \frac{1}{\sqrt{2}}} \big( \ket{01} + \ket{10} \big)
\\
\nonumber
\ket{\rBell_4} &= {\textstyle \frac{1}{\sqrt{2}}} \big( \ket{01} - \ket{10} \big)
\intertext{The unitary measurement 2\-cell $M_\text{Bell}$ is constructed from this basis in the manner described by Theorem~\ref{thm:2hilbnondegenmeasurement}, with the following result:}
M_\text{Bell} \,&:=\,
\matrix{
\frac 1 {\sqrt{2}}
\matrix{\xs 1 \xs & 0 & 0 & 1
\\
\xs 1 \xs & 0 & 0 & \!\!\!\smash{-1}\!\!\!
\\
\xs 0 \xs & 1 & 1 & 0
\\
\xs 0 \xs & 1 & -1 & 0
}
}
\intertext{\parbox{\linewidth}{This is a 1-by-1 matrix with a single entry, given by the change of basis matrix that rotates the Bell basis into the computational basis.
\\
\hspace*{15pt}The final stage of the protocol involves performing a controlled unitary operation $U_\text{Bell}$ on the remaining qubit. This is represented by the following 2-cell, as according to Theorem~\ref{thm:controlled}:}}
U \,&:=\, \matrix{
\matrix{1&0\\0&\,\,1\,}
\\
\matrix{1&0\\0&-1}
\\
\matrix{0&1\\1&\,\,0\,}
\\
\matrix{0&1\\ {-1} &0}}
\end{align}
It is a column matrix of four 2-by-2 matrices. Each of these 2-by-2 matrices is a different unitary operation, the particular one to be applied depending on the value of the classical data produced by the measurement 2-cell $M_\text{Bell}$.

The \textit{Mathematica} package \texttt{TwoVect.m} can be used to straightforwardly verify that these 2\-cells provide a solution to equation~\eqref{eq:teleportexplicit}. After entering the definitions of the 1\-cells \Wa(4), \Wb(4) and \Q, and the 2\-cells \Bell, $M$, $U$ and \Create(4), we test equation~\eqref{eq:teleportexplicit} with the following code~\cite{v12-highernotebook}:
\begin{align}
( \Wa(4) \circ U _\text{Bell}) \cdot (M_\text{Bell} \circ \Q) \cdot (\Q\circ \Bell) == (\Create(4) \circ \Q)
\end{align}
Mathematica returns the result \texttt{true}, demonstrating that the equation is satisfied.

\subsubsection*{Axiomatization}

We now analyze the form of the quantum teleportation specification more closely. 

\begin{theorem}
\label{thm:teleportationdoublyunitary}
A vertically unitary controlled operation
\[
\phantom{\frac{1}{\sqrt n}}
\begin{aligned}
\begin{tikzpicture}[scale=1.3, thick]
    \draw [white] (0.7,1) to (2.2,1);
    \draw [fill=\fillA, draw=none]
        (0.5,0.5)
        rectangle
        (1,1.5);
    \draw [fill=\fillA, draw=none] (1,1.5)
        to [out=down, in=\nwangle] (1.4,1)
        to [out=\swangle, in=up] (1,0.5);
    \draw
        (1,0.5)
        to [out=up, in=\swangle] (1.4,1)
        to [out=\nwangle, in=down] (1,1.5);
    \draw
        (1.8,0.5)
        to [out=up, in=\seangle] (1.4,1)
        to [out=\neangle, in=down] (1.8,1.5);
    \node [Vertex] at (1.4,1) {};
    \end{tikzpicture}
    \end{aligned}
\]
forms part of a solution to the teleportation specification if and only if it is also horizontally unitary, up to a scalar factor.
\end{theorem}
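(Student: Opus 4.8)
The plan is to prove both implications by bending the wires of the controlled operation and thereby reducing the whole teleportation equation~\eqref{eq:mainteleport} to a single \emph{vertical} (in)vertibility statement about that operation. The essential tool is the graphical calculus together with the duality equations~(\ref{eq:yank1}--\ref{eq:yank4}), which let wires be slid around cups and caps without changing the value of a diagram. First I would record the standard observation that the entangled-state preparation on the left of~\eqref{eq:mainteleport} is a \emph{cup}: it is the unit of a duality, so its two output systems can be used to bend wires. The witnesses~\eqref{eq:examplewitnesses} supply the adjunctions, and the reversal axioms of Section~\ref{sec:dagger} make the cup and its reversed cap $\dagger$-related.

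Next I would bend the input wire (entering at the bottom) and the output wire (leaving at the top) of both sides of the equation by composing with the appropriate duality $2$-cells. On the left-hand side the adjoined bends straighten a snake against the internal Bell cup, by yanking, and collapse the left side to the vertical composite of a \emph{bent measurement} $\tilde M$ (one leg of $M$ pulled around the cup) followed by a \emph{bent controlled operation} $\tilde U$ (the analogous bending of $U$, chosen so that $\tilde U$ is exactly one of the bent composites~\eqref{eq:bentexamples} of Lemma~\ref{lem:horizontalfromvertical}). On the right-hand side the same bends turn the uniform-creation $2$-cell together with the free wire into a scalar multiple of an identity. The upshot is that~\eqref{eq:mainteleport} becomes equivalent to an equation of the form
\[
\tilde U \cdot \tilde M \;=\; \tfrac{1}{\sqrt n}\,\id,
\]
up to the precise ordering and normalization.

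With this reformulation both directions are short. For the forward direction, $M$ is a nondegenerate measurement and hence vertically unitary; since bending around a $\dagger$-compatible cup sends (up to a scalar) vertically unitary $2$-cells to vertically unitary ones, $\tilde M$ is vertically unitary up to a scalar. The displayed equation then exhibits $\tilde U$ as a scalar multiple of $\tilde M^{-1}=\tilde M^{\dagger}$, so $\tilde U$ is vertically unitary up to a scalar; by Lemma~\ref{lem:horizontalfromvertical} and its $\dagger$-refinement, $U$ is horizontally unitary up to a scalar. For the converse, if $U$ is horizontally unitary up to a scalar then $\tilde U$ is vertically unitary up to a scalar; I would then \emph{define} the measurement by bending $\tfrac{1}{\sqrt n}\tilde U^{-1}$ back across the cup, which yields a genuinely unitary measurement $2$-cell $M$, and take the Bell state to be the canonical cup. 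The displayed equation then holds by construction, hence so does~\eqref{eq:mainteleport}, so $U$ is part of a solution.

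The hard part will be the first step: carrying out the bending rigorously so that the teleportation equation really does collapse to $\tilde U\cdot\tilde M \propto \id$ with $\tilde U$ \emph{literally} the bent $2$-cell demanded by Lemma~\ref{lem:horizontalfromvertical}. This requires tracking precisely which adjunction and which duality $2$-cells witness each bend, and accounting for the normalization scalars $1/\sqrt n$ coming from the entangled state --- these are exactly what the phrase ``up to a scalar factor'' absorbs. A secondary point to verify is that $\dagger$ is compatible with bending, i.e.\ that the cup and its reversed cap are $\dagger$-related so that unitarity is preserved under the correspondence; this is guaranteed by Section~\ref{sec:dagger} and the ambidextrous-duality equations~(\ref{eq:yank1}--\ref{eq:yank4}).
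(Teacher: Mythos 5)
Your overall strategy --- bend wires until the teleportation equation \eqref{eq:mainteleport} collapses to a vertical statement about a bent composite, then invoke Lemma~\ref{lem:horizontalfromvertical} --- is the same as the paper's, but the principle you lean on twice is false: bending a 2\-cell around a $\dag$\-compatible cup does \emph{not} send vertically unitary 2\-cells to vertically unitary ones up to a scalar. Concretely, in \cat{2Hilb} take a nondegenerate Bell-type measurement, i.e.\ a vertical unitary $M : H \otimes H \to \C^n$ with $n = d^2$, $d = \dim H$, and let $\tilde M = (M \otimes \id_H)(\id_H \otimes \mathrm{cup})$ be one leg of $M$ pulled around the unnormalized cup $\sum_i \ket{ii}$. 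Then $\tilde M^\dag \tilde M = d\,\id_H$, so $\tilde M$ is a scaled isometry, but $\tilde M \tilde M^\dag = (M \otimes \id)(\id \otimes \mathrm{cup}\,\mathrm{cup}^\dag)(M^\dag \otimes \id)$ involves the rank-one operator $\mathrm{cup}\,\mathrm{cup}^\dag$, and indeed $\tilde M$ maps a $d$\-dimensional space to an $nd = d^3$\-dimensional one, so it cannot be vertically unitary up to any scalar and your ``$\tilde M^{-1} = \tilde M^\dag$'' does not exist. The assertion that a bent cell is vertically unitary is, by the very definition via the composites \eqref{eq:bentexamples}, the statement that the original cell is \emph{horizontally} unitary --- exactly the nontrivial property the theorem is about. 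If your preservation principle held, every vertically unitary cell would automatically be horizontally unitary up to scalar, the theorem would be vacuous, and the paper's later result that non-horizontally-invertible measurement vertices can never be part of a teleportation protocol would be contradicted.

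The repair is the paper's actual argument, which never inverts a bent cell: cancel the \emph{unbent} $U$ by composing both sides of the explicit specification with $U^\dag$ (legitimate, since $U$ is vertically unitary by hypothesis), then yank the Bell cup off $M$'s input and onto the other side of the equation. This leaves $M$, plain and unbent, equal to $\frac{1}{\sqrt n}$ times the bent composite of $U^\dag$. Vertical unitarity of $M$ then \emph{directly} exhibits that bent composite as vertically unitary up to scalar, which by definition is horizontal unitarity of $U$ up to scalar; and the converse is the literal reversal, where one \emph{defines} $M$ to be $\frac{1}{\sqrt n}$ times the bent $U^\dag$ and its vertical unitarity is supplied by the hypothesis of horizontal unitarity, not by any claim that ``bending back preserves unitarity''. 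Your converse direction needs the same correction as your forward one: the unitarity of the constructed measurement must come from the definition of horizontal unitarity, since bending alone does not provide it.
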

\begin{proof}
We begin with the following explicit version of the quantum teleportation specification, in which $U$ is the vertically unitary controlled operation of the hypothesis, and $M$ is a vertically unitary measurement operation.
\begin{equation}
\frac{1}{\sqrt{n}}\,
\begin{aligned}
\begin{tikzpicture}[scale=1.2, thick]
    \begin{pgfonlayer}{foreground}
    \node (U) [minimum width=1.7cm, minimum height=18pt, fill=white, draw=black, thick] at (2,2.0) {$U$};
    \node (M) [minimum width=1.7cm, minimum height=18pt, fill=white, draw=black, thick, anchor=center] at (1.0,1.0) {$M$};
    \end{pgfonlayer}
    \draw [fill=\fillA] (0.5,3)
        to (0.5,1.0)
        to (1.5,1.0)
        to (1.5,3);
    \draw (0.5,0) to (0.5,1.5);
    \draw (1.5,1.6 |- M.south)
        to [out=down, in=down, looseness=1.5]
        (2.5,1.6 |- M.south);
    \draw (2.5,1.0 |- M.south) to (2.5,3.0);
    \node [fill=white, rounded corners=1pt, inner sep=1pt] at (1,2.1) {$n$};
\end{tikzpicture}
\end{aligned}
\quad=\quad
\frac{1}{n}\,
\begin{aligned}
\begin{tikzpicture}[scale=1.2, thick]
    \draw [fill=\fillA] (0.5,3.5)
        to (0.5,2)
        to [out=down, in=down, looseness=2] (1.5,2)
        to (1.5,3.5);
    \draw (2.5,0.5) to (2.5,3.5);
    \node [fill=white, rounded corners=1pt, inner sep=1pt] at (1,2.6) {$n$};
\end{tikzpicture}
\end{aligned}
\end{equation}
Suppose we have an $M$ and a $U$ satisfying this equation. Applying the adjoint of the 2\-cell $U$ and simplifying the normalization factors, this is equivalent to the following expression:
\begin{equation}
\begin{aligned}
\begin{tikzpicture}[scale=1.2, thick]
    \begin{pgfonlayer}{foreground}
    \node (M) [minimum width=1.7cm, minimum height=18pt, fill=white, draw=black, thick, anchor=center] at (1.0,1.0) {$M$};
    \end{pgfonlayer}
    \draw [fill=\fillA] (0.5,2.0)
        to (0.5,1.0)
        to (1.5,1.0)
        to (1.5,2.0);
    \draw (0.5,0) to (0.5,1.5);
    \draw (1.5,1.6 |- M.south)
        to [out=down, in=down, looseness=1.5]
        (2.5,1.6 |- M.south);
    \draw (2.5,1.0 |- M.south) to (2.5,2.0);
    \node [fill=white, rounded corners=1pt, inner sep=1pt] at (1,1.6) {$n$};
\end{tikzpicture}
\end{aligned}
\quad=\quad
\frac{1}{\sqrt{n}}\,
\begin{aligned}
\begin{tikzpicture}[scale=1.2, thick]
    \begin{pgfonlayer}{foreground}
    \node (U) [minimum width=1.7cm, minimum height=18pt, fill=white, draw=black, thick] at (2,2.5) {$U ^\dag$};
    \end{pgfonlayer}
    \draw [fill=\fillA] (0.5,3.5)
        to (0.5,2 |- U.south)
        to [out=down, in=down, looseness=1.5] (1.5,2 |- U.south)
        to (1.5,3.5);
    \draw (2.5,1.5) to (2.5,3.5);
    \node [fill=white, rounded corners=1pt, inner sep=1pt] at (1,2.6) {$n$};
\end{tikzpicture}
\end{aligned}
\end{equation}
Making use of the topological properties of the Bell state for $n$-dimensional quantum systems, this can be rewritten in the following way:
\begin{equation}
\begin{aligned}
\begin{tikzpicture}[scale=1.2, thick]
    \begin{pgfonlayer}{foreground}
    \node (M) [minimum width=1.7cm, minimum height=18pt, fill=white, draw=black, thick, anchor=center] at (1.0,1.0) {$M$};
    \end{pgfonlayer}
    \draw [fill=\fillA] (0.5,2.0)
        to (0.5,1.0)
        to (1.5,1.0)
        to (1.5,2.0);
    \draw (0.5,0) to (0.5,1.5);
    \draw (1.5,0) to (1.5,1.5);
    \node [fill=white, rounded corners=1pt, inner sep=1pt] at (1,1.6) {$n$};
\end{tikzpicture}
\end{aligned}
\quad=\quad
\frac{1}{\sqrt{n}}\,
\begin{aligned}
\begin{tikzpicture}[scale=1.2, thick]
    \begin{pgfonlayer}{foreground}
    \node (U) [minimum width=1.7cm, minimum height=18pt, fill=white, draw=black, thick] at (2,2.5) {$U ^\dag$};
    \end{pgfonlayer}
    \draw [fill=\fillA] (0.5,3.5)
        to (0.5,2 |- U.south)
        to [out=down, in=down, looseness=1.5] (1.5,2 |- U.south)
        to (1.5,3.5);
    \node [fill=white, rounded corners=1pt, inner sep=1pt] at (1,2.6) {$n$};
    \draw (2.5,1.5) to (2.5,1.5 |- U.north) to [out=up, in=up, looseness=1.5] (3.5,1.5 |- U.north) to (3.5,1.5);
\end{tikzpicture}
\end{aligned}
\end{equation}
By the definition of horizontally unitary map, this gives us our result, since $M$ is vertically unitary. The converse direction is the reversal of this argument.
\end{proof}

\noindent
We see that our controlled measurement operation must be unitary both horizontally \emph{and} vertically, up to a scalar factor. We call this property \emph{double unitarity}.

\subsubsection*{More general teleportation procedures}

We now focus on the following question: given a basis of a bipartite Hilbert space $H \otimes H$, where $H$ is finite-dimensional, can a measurement in this basis form part of a teleportation protocol? This is a different perspective on generalized teleportation to that often taken in the literature, where attention is more usually devoted to the computational power of the shared resource than to the measurement basis.

In general, where the shared resource is not of Bell type, horizontal unitarity is not required for a measurement vertex to form part of a successful teleportation protocol. However, the following theorem shows that the weaker property of horizontal \textit{invertibility} is required. We are left with an interesting open question: does every horizontally-invertible bipartite measurement vertex form part of a deterministic quantum teleportation protocol for some shared resource?

\begin{theorem}
If a bipartite measurement vertex is not horizontally invertible, then it cannot form part of the solution to the quantum teleportation specification, even with general shared resource.
\end{theorem}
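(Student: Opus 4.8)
The plan is to prove the contrapositive: assuming the bipartite measurement vertex $M$ forms part of a solution to the teleportation specification, I will show that $M$ is horizontally invertible. By Lemma~\ref{lem:horizontalfromvertical} it suffices to exhibit a vertical inverse for one of the bent composites of $M$ appearing in~\eqref{eq:bentexamples}. The strategy mirrors the Bell-type argument of Theorem~\ref{thm:teleportationdoublyunitary}, but with the shared resource replaced by an arbitrary entangled state, so that the duality it induces on the teleported system is only an adjunction rather than a unitary one, which is why the conclusion weakens from horizontal unitarity to horizontal invertibility.

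First I would write out the teleportation equation explicitly for a general shared resource $\psi$: a state preparing an entangled pair, a vertically unitary measurement $M$ acting on the incoming system together with one half of the pair, and a vertically unitary controlled operation $U$ acting on the other half. Since $U$ is vertically unitary, I compose both sides with $U^\dagger$ to cancel it, exactly as in the proof of Theorem~\ref{thm:teleportationdoublyunitary}, leaving an equation whose left-hand side contains $M$ composed with $\psi$ and whose right-hand side is a scalar multiple of $U^\dagger$ attached to the classical-data cup produced by uniform creation.

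Next I would use the resource $\psi$ to bend the wire carrying the teleported system. The key observation is that, because the right-hand side of the specification contains the bare identity on the output system, $\psi$ must be nondegenerate: the induced map from the input system through $\psi$ to the output must be an isomorphism, so $\psi$ serves as the unit of an adjunction for the $1$-cell carrying the quantum system, with a uniquely determined counit, providing honest duality data even though $\psi$ need not be maximally entangled. Bending along this adjunction transforms the cancelled equation into precisely one of the snake-type identities of~\eqref{eq:bentexamples}, exhibiting a scalar multiple of a suitably bent $U^\dagger$ as a one-sided vertical inverse to the correspondingly bent $M$. Working in $\cat{2Hilb}$, Lemma~\ref{lem:fg1gf1} promotes this to a genuine two-sided vertical inverse; and since $U^\dagger$ is vertically invertible and the bending is carried out along the invertible duality induced by $\psi$, the bent $U^\dagger$ is vertically invertible, hence so is the bent $M$. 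Lemma~\ref{lem:horizontalfromvertical} then yields horizontal invertibility of $M$, which is the required contrapositive.

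The main obstacle is the third step: showing that an arbitrary successful resource $\psi$ genuinely equips the quantum $1$-cell with an adjunction, so that the bending is well defined and invertible, and then checking that the bent diagram really matches the $1$-cell types ${}^* \! F$, $G^*$, $H^*$ and ${}^* \! J$ demanded by the definition of horizontal invertibility. Unlike the Bell case, where the cup is self-dual and unitary and one obtains horizontal unitarity, here $\psi$ supplies only a non-unitary duality, so the argument delivers invertibility and no more; the care lies in tracking exactly which duality data is used and confirming that the scalar factors never vanish.
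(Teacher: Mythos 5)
There is a genuine gap, and it sits exactly where you flagged your ``main obstacle'': the claim that a successful general resource $\psi$ must be nondegenerate in the sense of inducing an isomorphism, and hence equips the quantum $1$\-cell with an adjunction along which you can bend. This is false. The specification only forces each branch composite (measurement effect, then $\psi$, then correction) to be an invertible multiple of the identity; it does not force the bent map induced by $\psi$ itself to be invertible. Concretely, take the resource to be a Bell state on $H \otimes H$ tensored with a uniform-superposition state of an auxiliary space $H'$, and the measurement to be a Bell measurement tensored with a nondegenerate measurement in a basis unbiased with respect to that auxiliary state: teleportation succeeds on every branch, yet the bent $\psi$ is a map out of a space of dimension $\dim H \cdot \dim H'$ into $H$ and has no inverse. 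Since your candidate horizontal inverse is ``a suitably bent $U^\dag$'' with $\psi$ eliminated by bending along this supposed duality, the whole third step collapses in such cases; you also assume $U$ is vertically unitary so as to cancel it, an assumption the theorem does not need and which narrows the result.

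The paper's proof avoids both problems by never asking $\psi$ for duality data. All bending is done with the \emph{canonical} ambidextrous duals that every $1$\-cell possesses (the self-duality of finite-dimensional Hilbert spaces, together with the topological behaviour of the classical region), and then $\psi$ and $U$ are not cancelled but absorbed together into a single composite $2$\-cell $C$ --- a new controlled operation, not necessarily unitary or a priori invertible. The specification then says precisely that $C$ is a one-sided vertical inverse to the twisted measurement vertex; because the relevant composite is an endomorphism, Lemma~\ref{lem:fg1gf1} upgrades this to a two-sided inverse, and the existence of such a $C$ is by definition horizontal invertibility of the measurement vertex, via Lemma~\ref{lem:horizontalfromvertical}. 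So your repair is: keep the contrapositive skeleton, drop the adjunction-from-$\psi$ step and the cancellation of $U$, bend with the canonical duals, and let the pair $(\psi, U)$ jointly play the role of the inverse rather than asking $U^\dag$ alone to do so. This is also why the conclusion is invertibility rather than unitarity: not because $\psi$ supplies a non-unitary duality, but because $C$ is merely some $2$\-cell, in contrast with the Bell-state argument of Theorem~\ref{thm:teleportationdoublyunitary} where the inverse is exhibited as an adjoint.
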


\begin{proof}
We consider a quantum teleportation protocol with general shared resource, $\psi$, not necessarily a Bell state. This gives us the following specification:
\begin{equation}
\begin{aligned}
\begin{tikzpicture}[scale=1.2, thick]
    \node (v) [Vertex] at (1,1.5) {};
    \begin{pgfonlayer}{foreground}
    \node (U)
        [minimum width=1.7cm, minimum height=18pt,
        fill=white, draw=black, thick,
        anchor=center]
        at (2,2.2) {$U$};
    \end{pgfonlayer}
    \draw [fill=\fillA] (0.5,3)
        to (0.5,2 |- U.south)
        to [out=down, in=\nwangle] (v.center)
        to [out=\neangle, in=down] (1.5,2 |- U.south)
        to (1.5,3);
    \draw [thick] (v.s1) to (v.s1 |- 0.5,-0.1);
    \draw [thick] (v.s2)
        to [out=down, in=up]
            (1.5,0.6);
    \draw [thick] (2.5,2) to (2.5,3);
    \draw [thick] (2.5,0.6)
        to (2.5,0 |- U.south);
    \draw [thick] (1.3,0.6)
        -- (2.7,0.6)
        -- (2,0.1)
        -- cycle;
    \node [anchor=center] at (2,0.4) {$\psi$};
\end{tikzpicture}
\end{aligned}
\quad
=
\quad
\begin{aligned}
\begin{tikzpicture}[scale=1.2]
    \draw [fill=\fillA] (0.3,3)
        to (0.3,2)
        to [out=down, in=down, looseness=2] (1.5,2)
        to (1.5,3);
    \draw [thick] (0.3,3)
        to (0.3,2)
        to [out=down, in=down, looseness=2] (1.5,2)
        to (1.5,3);
    \draw [thick] (v.s1 |- 0.5,-0.1)
        to [out=up, in=down, out looseness=0.9, in looseness=1]
            (2.4,2.1) to (2.4,3);
\end{tikzpicture}
\end{aligned}
\end{equation}
Making use of the topological behaviour of classical information, and the self-duality of finite-dimensional Hilbert spaces, we obtain the following equivalent version of this equation:
\begin{align}
\begin{aligned}
\begin{tikzpicture}[scale=1.2]
    \node (v) [Vertex] at (0.9,1.3) {};
    \draw [fill=\fillA, draw=none] (-0.1,0.5)
        to (-0.1,1)
        to [out=up, in=left] (0.3,1.7)
        to [out=right, in=\nwangle] (v.nw)
        to (v.ne)
        to [out=\neangle, in=down] (1.5,2)
        to (1.5,3.7)
        to ([xshift=-0.8cm] current bounding box.north west)
        to (current bounding box.south west);
    \draw [thick] (-0.1,0.5)
        to (-0.1,1)
        to [out=up, in=left] (0.3,1.7)
        to [out=right, in=\nwangle] (v.nw)
        to (v.ne)
        to [out=\neangle, in=down] (1.5,2)
        to (1.5,3.7);
    \draw [thick] (v.s1) to (v.s1 |- 0.5,0.5);
    \node (f) [minimum width=1.5cm, minimum height=18pt, fill, draw=black, thick, fill=white]
        at (1.95,3.1) {$U$};
    \begin{scope}[xshift=20pt,yshift=40pt]
        \draw [thick] (1.3,0.8) -- (2.5,0.8) -- (1.9,0.3) -- cycle;
        \node [anchor=center] at (1.9,0.60) {$\psi$};
        \node (P1) at (1.7,0.8) {};
        \node (P2) at (2.1,0.8) {};
    \end{scope}
    \draw [thick] (v.s2)
        to [out=down, in=down] (1.8,1.3)
        to [out=up, in=down]
            (1.8,2.2)
        to [out=up, in=up, looseness=1.5] (P1.center);
    \draw [thick] (P2.center)
        to [out=up, in=down]
        (f.-32);
    \draw [thick] (f.32) to (f.32 |- 0.5,3.7);
    \draw [dotted] (1,1.6) rectangle (3.5,3.5);
\end{tikzpicture}
\end{aligned}
\quad&=\quad
\begin{aligned}
\begin{tikzpicture}[scale=1.2]
    \draw [fill=\fillA, draw=none] (1,0.5)
        to [out=up, in=down] (1,3.7)
        to ([xshift=-1.5cm] current bounding box.north west)
        to (current bounding box.south west);
    \draw [thick] (1,0.5)
        to [out=up, in=down] (1,3.7);
    \draw [thick] (2,0.5)
        to [out=up, in=down] (2,3.7);
\end{tikzpicture}
\end{aligned}
\intertext{All unknown quantities have been moved into the same part of the diagram, and surrounded by a dotted box to emphasize that they can be treated as a single composite 2\-cell. However, it is clear that the contents of the box simply constitute a new controlled operation, albeit not necessarily unitary. Writing this composite as $C$, we can simplify our equation further:}
\begin{aligned}
\begin{tikzpicture}[scale=1.2]
    \node (v) [Vertex] at (0.9,1.3) {};
    \draw [fill=\fillA, draw=none] (-0.1,0.5)
        to (-0.1,1)
        to [out=up, in=left] (0.3,1.7)
        to [out=right, in=\nwangle] (v.nw)
        to (v.ne)
        to [out=\neangle, in=down] (1.5,2)
        to (1.5,3.2)
        to ([xshift=-0.8cm] current bounding box.north west)
        to (current bounding box.south west);
    \draw [thick] (-0.1,0.5)
        to (-0.1,1)
        to [out=up, in=left] (0.3,1.7)
        to [out=right, in=\nwangle] (v.nw)
        to (v.ne)
        to [out=\neangle, in=down] (1.5,2)
        to (1.5,3.2);
    \draw [thick] (v.s1) to (v.s1 |- 0.5,0.5);
    \node (f) [minimum width=1.5cm, minimum height=18pt, fill, draw=black, thick, fill=white]
        at (1.95,2.5) {$C$};
    \draw [thick] (v.s2)
        to [out=down, in=down] (f.-32 |- 0.5,1.3)
        to (f.-32);
    \draw [thick] (f.32) to (f.32 |- 0.5,3.2);
\end{tikzpicture}
\end{aligned}
\quad&=\quad
\begin{aligned}
\begin{tikzpicture}[scale=1.2]
    \draw [fill=\fillA, draw=none] (1,0.5)
        to [out=up, in=down] (1,3.2)
        to ([xshift=-1.5cm] current bounding box.north west)
        to (current bounding box.south west);
    \draw [thick] (1,0.5)
        to [out=up, in=down] (1,3.2);
    \draw [thick] (2,0.5)
        to [out=up, in=down] (2,3.2);
\end{tikzpicture}
\end{aligned}
\intertext{We see that $C$ is an inverse on one side to our `twisted' measurement vertex. But since $C$ is an endomorphism, it must therefore be a two-sided inverse, by Lemma~\ref{lem:fg1gf1}. This gives us the following equation:}
\begin{aligned}
\begin{tikzpicture}[scale=1.2]
    \node [minimum width=1.5cm, minimum height=18pt, fill, draw=black, thick, fill=white, white] at (1.95,2) {};
    \node (v) [Vertex] at (0.9,1.3) {};
    \draw [fill=\fillA, draw=none] (-0.1,-0.2)
        to (-0.1,1)
        to [out=up, in=left] (0.3,1.7)
        to [out=right, in=\nwangle] (v.nw)
        to (v.ne)
        to [out=\neangle, in=down] (1.5,2)
        to (1.5,2.5)
        to ([xshift=-0.8cm] current bounding box.north west)
        to (current bounding box.south west);
    \draw [thick] (-0.1,-0.2)
        to (-0.1,1)
        to [out=up, in=left] (0.3,1.7)
        to [out=right, in=\nwangle] (v.nw)
        to (v.ne)
        to [out=\neangle, in=down] (1.5,2)
        to (1.5,2.5);
    \draw [thick] (v.s1) to (v.s1 |- 0.5,-0.2);
    \node (f) [minimum width=1.5cm, minimum height=18pt, fill, draw=black, thick, fill=white] at (0.4,0.5) {$C$};
    \draw [thick] (v.s2)
        to [out=down, in=down] (2.35,1.5 |- v.s2)
        to (2.35,2.5);
\end{tikzpicture}
\end{aligned}
\quad&=\quad
\begin{aligned}
\begin{tikzpicture}[scale=1.2]
    \draw [fill=\fillA, draw=none] (1,0.5)
        to [out=up, in=down] (1,3.2)
        to ([xshift=-1.5cm] current bounding box.north west)
        to (current bounding box.south west);
    \draw [thick] (1,0.5)
        to [out=up, in=down] (1,3.2);
    \draw [thick] (2,0.5)
        to [out=up, in=down] (2,3.2);
\end{tikzpicture}
\end{aligned}
\end{align}
But the existence of such an invertible 2\-cell $C$ is precisely the condition for the original measurement vertex to be horizontally invertible.
\end{proof}

\subsection{Dense coding}

\subsubsection*{Specification}

We can use our formalism to write down the following specification for the dense coding protocol:
\begin{equation}
\label{eq:densecoding}
\frac{1}{\sqrt{n}}
\,
\begin{aligned}
\begin{tikzpicture}[scale=1, thick]
\draw [use as bounding box, draw=none] (-1.5,0.5) rectangle (2.5,3.5);
\node (m) [Vertex] at (1.75,2.5) {};
\node (u) [Vertex] at (0.5,1.5) {};
\draw [draw=none, fill=\fillA] (-0.25,0.5) to (-0.25,3.5) to (-1.5,3.5) to (-1.5,0.5);
\draw [fill=\fillA] (-0.25,0.5)
    to [out=up, in=\swangle] (u.center)
    to (u.center)
    to [out=\nwangle, in=down] (-0.25,3)
    to (-0.25,3.5);
\draw (m.center)
    to [out=\swangle, in=\neangle] (u.center)
    to [out=\seangle, in=120] (1,1)
    to [out=-60, in=down, looseness=1.5] (2.5,1.5)
    to [out=up, in=\seangle] (m.center);
\draw [fill=\fillA] (1,3.5)
    to [out=down, in=\nwangle] (m.center)
    to [out=\neangle, in=down] (2.5,3.5);
\end{tikzpicture}
\end{aligned}
\qquad=\qquad
\begin{aligned}
\begin{tikzpicture}[scale=1, thick]
\draw [draw=none, fill=\fillA]
    (0,3)
    to [out=down, in=down, looseness=2]
    (1.25,3)
    to (2.5,3)
    to (2.5,2.5)
    to [out=down, in=up] (0,0.0)
    to (-1.25,0)
    to (-1.25,3)
    to (0,3);
\draw
    (2.5,3)
    to (2.5,2.5)
    to [out=down, in=up] (0,0.0)
    to (0,0);
\draw
    (0,3)
    to [out=down, in=down, looseness=2]
    (1.25,3);
\end{tikzpicture}
\end{aligned}
\end{equation}
On the left-hand side, we begin with a witness for classical data. An entangled state is then prepared, and a controlled unitary operation is performed on the left-hand part of of the entangled state. Finally, a bipartite measurement is carried out on the two systems. As a result, classical information is copied.

\subsubsection*{Implementation}

As with teleportation, we now verify that the conventional procedure for dense coding with qubits gives an implementation in our sense, as providing a solution to the equation~\eqref{eq:densecoding}.

As before, we begin by rewriting our specification in a manner which makes each component clearly identifiable.
\begin{equation}
\label{eq:explicitdensecoding}
\begin{aligned}
\begin{tikzpicture}[scale=1.2, thick]
    \begin{pgfonlayer}{foreground}
    \node (U) [minimum width=1.7cm, minimum height=18pt, fill=white, draw=black, thick] at (2,2) {$M _\text{Bell}$};
    \node (M) [minimum width=1.7cm, minimum height=18pt, fill=white, draw=black, thick, anchor=center] at (1.0,1.0) {$U _\text{Bell}$};
    \node (Bell) [minimum width=1.7cm, minimum height=18pt, fill=white, draw=black, thick, anchor=center] at (2,0.0) {$\Bell$};
    \end{pgfonlayer}
    \draw [draw=none, fill=\fillA] (0.5,-1) rectangle (-0.5,3);
    \draw [fill=\fillA, draw=none, thick] (1.5,3)
        to (1.5,2)
        to (2.5,2)
        to (2.5,3);
    \draw (0.5,-1) to (0.5,3.0);
    \draw (2.5,0.0) to (2.5,3);
    \draw (1.5,0.0) to (1.5,3);
    \node [fill=white, rounded corners=1pt, inner sep=1pt] at (-0.1,1) {4};
    \node [fill=white, rounded corners=1pt, inner sep=1pt] at (2,2.65) {4};
\end{tikzpicture}
\end{aligned}
\quad=\quad
\begin{aligned}
\begin{tikzpicture}[scale=1.2, thick]
    \begin{pgfonlayer}{foreground}
    \node (M) [minimum width=1.7cm, minimum height=18pt, fill=white, draw=black, thick, anchor=center] at (1,1.5) {$\Copy(4)$};
    \end{pgfonlayer}
    \draw [fill=\fillA, draw=none] (-0.5,3.5)
        rectangle (2.5,-0.5);
    \draw [draw=none, fill=white] (0.5,1.5) rectangle (1.5,3.5);
    \draw (2.5,-0.5) to (2.5,3.5);
    \draw (0.5,1.5) to (0.5,3.5);
    \draw (1.5,1.5) to (1.5,3.5);
    \node [fill=white, rounded corners=1pt, inner sep=1pt] at (1,0.4) {4};
\end{tikzpicture}
\end{aligned}
\end{equation}

\noindent
The normalization factor of~\eqref{eq:densecoding} is no longer present, as we absorb it into the preparation of the Bell state $\psi_\text{Bell}$. The definitions of the 2\-cells \Bell, $U_\text{Bell}$ and $M_\text{Bell}$ are the same as in the case of quantum teleportation. The \Copy(4) 2\-cell is defined according to Theorem~\ref{thm:2hilbwitness}.

To verify equation~\eqref{eq:explicitdensecoding} we evaluate the following expression in Mathematica, as available in the notebook accompanying this article~\cite{v12-highernotebook}:
\begin{equation}
(\Wa(4) \circ M_\text{Bell}) \cdot (U_\text{Bell} \circ \Q)\cdot (\Wa(4)\circ \Bell)==(\Copy(4) \circ \Wa(4))
\end{equation}
The result is \texttt{true}, meaning we have a correct implementation of dense coding.

\subsubsection*{Axiomatization}

Implementations of the dense coding specification again correspond exactly to doubly-unitary maps of a particular form, as was the case with teleportation. As a result, we have an exact correspondence between implementations of quantum teleportation and implementations of dense coding, as has been described by other more traditional techniques in the literature~\cite{w01-tdc}. As discussed in Section~\ref{sec:mathematicalaspects}, this correspondence is most elegantly described in terms of the \emph{theory} of a specification.
\begin{theorem}
\label{thm:densecodingdoublyunitary}
A vertically unitary controlled operation
\[
\phantom{\frac{1}{\sqrt n}}
\begin{aligned}
\begin{tikzpicture}[scale=1.3, thick]
    \draw [white] (0.7,1) to (2.2,1);
    \draw [fill=\fillA, draw=none]
        (0.5,0.5)
        rectangle
        (1,1.5);
    \draw [fill=\fillA, draw=none] (1,1.5)
        to [out=down, in=\nwangle] (1.4,1)
        to [out=\swangle, in=up] (1,0.5);
    \draw
        (1,0.5)
        to [out=up, in=\swangle] (1.4,1)
        to [out=\nwangle, in=down] (1,1.5);
    \draw
        (1.8,0.5)
        to [out=up, in=\seangle] (1.4,1)
        to [out=\neangle, in=down] (1.8,1.5);
    \node [Vertex] at (1.4,1) {};
    \end{tikzpicture}
    \end{aligned}
\]
forms part of a solution to the dense coding specification if and only if it is also horizontally unitary, up to a scalar factor.
\end{theorem}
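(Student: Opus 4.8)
The plan is to adapt the diagrammatic argument of Theorem~\ref{thm:teleportationdoublyunitary}, with the roles of the measurement and the controlled operation exchanged to reflect the fact that in dense coding the controlled operation is performed \emph{before} the joint measurement rather than after it. I would start from the explicit specification~\eqref{eq:explicitdensecoding}, written for a general vertically unitary controlled operation $U$ and a general vertically unitary measurement $M$, keeping the normalization scalar of~\eqref{eq:densecoding} explicit throughout.

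The key step is again to remove one vertically unitary vertex by composing with its vertical inverse. In the teleportation proof the topmost vertex was the controlled operation, and its adjoint was applied; here the topmost vertex is the measurement, so I would compose both sides with $M^\dagger$ and use $M^\dagger\cdot M=\id$ to eliminate it, absorbing the normalization factors at the same time. This leaves the controlled operation $U$, sitting on one leg of the prepared Bell state, equated against a configuration built from the copy vertex and $M^\dagger$.

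I would then invoke the topological yanking identities~(\ref{eq:yank1}--\ref{eq:yank4}) for the $n$-dimensional Bell state, exactly as in the teleportation argument. Sliding $U$ around the cup and cap of the Bell state and straightening the copy vertex converts the equation into the statement that one of the two bent composites of~\eqref{eq:bentexamples} associated to $U$ coincides, up to the scalar factor, with a vertically unitary 2\-cell determined by $M$. By Lemma~\ref{lem:horizontalfromvertical}, vertical invertibility of such a bent composite is equivalent to horizontal invertibility of $U$; the vertical unitarity inherited from $M$ then upgrades this to horizontal unitarity up to the scalar, which is the claim. The converse is obtained by reversing each deformation, reconstructing a suitable measurement $M$ from a horizontally unitary $U$.

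The main obstacle I anticipate lies entirely in the diagrammatic bookkeeping of the yanking step. Because $U$ and $M$ occupy swapped positions relative to the teleportation case, care is needed to verify that after stripping $M$ it is genuinely the bent form of $U$ --- and not of $M$ --- whose vertical invertibility is being tested, so that Lemma~\ref{lem:horizontalfromvertical} characterizes horizontal unitarity of the intended vertex. Correctly tracking how the cups, caps and the copy vertex recombine to match the precise orientation and shaded-region pattern of~\eqref{eq:bentexamples}, together with the lone scalar factor that survives the deformations and forces the ``up to a scalar'' qualifier, is the only genuinely delicate part; everything else is a mechanical application of the string-diagram calculus.
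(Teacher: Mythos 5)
Your proposal follows essentially the same route as the paper's own proof: the paper likewise strips the topmost vertically unitary vertex by composing with $M^\dag$, then uses the topological behaviour of classical information and the Bell-state yanking moves to bend $U$ into exactly the form of~\eqref{eq:bentexamples}, equating it up to the scalar $\tfrac{1}{\sqrt{n}}$ with the vertically unitary $M^\dag$, so that horizontal unitarity of $U$ follows from the definition (via Lemma~\ref{lem:horizontalfromvertical}), with the converse obtained by reversing the argument. Your anticipated bookkeeping concern is resolved the way you expect --- it is indeed the bent form of $U$, not of $M$, that ends up being tested --- so the proposal is correct.
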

\begin{proof}
Our proof has the same form as the proof of Theorem~\ref{thm:teleportationdoublyunitary}, although it differs in its details since teleportation and dense coding have different specifications.

We begin with the following explicit version of the dense coding specification, in which $U$ is the vertically unitary controlled operation of the hypothesis, and $M$ is a vertically unitary measurement operation.
\begin{equation}
\frac{1}{\sqrt{n}}\,
\begin{aligned}
\begin{tikzpicture}[scale=1.2, thick]
    \begin{pgfonlayer}{foreground}
    \node (M) [minimum width=1.7cm, minimum height=18pt, fill=white, draw=black, thick] at (2,2) {$M$};
    \node (U) [minimum width=1.7cm, minimum height=18pt, fill=white, draw=black, thick, anchor=center] at (1.0,1.0) {$U$};
    \end{pgfonlayer}
    \draw [draw=none, fill=\fillA] (0.5,0) rectangle (-0.5,3);
    \draw [fill=\fillA, draw=none, thick] (1.5,3)
        to (1.5,2)
        to (2.5,2)
        to (2.5,3);
    \draw (0.5,0) to (0.5,3.0);
    \draw (1.5,3) to (1.5,0 |- U.south)
        to [out=down, in=down, looseness=1.5] (2.5,0 |- U.south)
        to (2.5,3);
    \node [fill=white, rounded corners=1pt, inner sep=1pt] at (0,2.6) {$n$};
    \node [fill=white, rounded corners=1pt, inner sep=1pt] at (2,2.6) {$n$};
\end{tikzpicture}
\end{aligned}
\quad=\quad
\begin{aligned}
\begin{tikzpicture}[scale=1.2, thick]
    \draw [fill=\fillA, draw=none] (-0.5,3)
        rectangle (2.5,0);
    \draw (2.5,0) to (2.5,3);
    \draw [fill=white] (0.5,3) to (0.5,1.7) to [out=down, in=down, looseness=1.5] (1.5,1.7) to (1.5,3);
    \node [fill=white, rounded corners=1pt, inner sep=1pt] at (1,0.7) {$n$};
\end{tikzpicture}
\end{aligned}
\end{equation}
Suppose we have an $M$ and a $U$ satisfying this equation. Applying the adjoint of the 2\-cell $M$ and using the topological behaviour of classical information, we obtain the following equivalent expression:
\begin{equation}
\frac{1}{\sqrt{n}}\,
\begin{aligned}
\begin{tikzpicture}[scale=1.2, yscale=-1, thick]
    \begin{pgfonlayer}{foreground}
    \node (U) [minimum width=1.7cm, minimum height=18pt, fill=white, draw=black, thick] at (2,2.5) {$U$};
    \end{pgfonlayer}
    \draw [fill=\fillA] (0.5,3.5)
        to (0.5,2 |- U.north)
        to [out=down, in=down, looseness=1.5] (1.5,2 |- U.north)
        to (1.5,3.5);
    \node [fill=white, rounded corners=1pt, inner sep=1pt] at (1,3.1) {$n$};
    \draw (2.5,1.5) to (2.5,1.5 |- U.south) to [out=up, in=up, looseness=1.5] (3.5,1.5 |- U.south) to (3.5,1.5);
\end{tikzpicture}
\end{aligned}
\quad=\quad
\begin{aligned}
\begin{tikzpicture}[scale=1.2, yscale=-1, thick]
    \begin{pgfonlayer}{foreground}
    \node (M) [minimum width=1.7cm, minimum height=18pt, fill=white, draw=black, thick, anchor=center] at (1.0,1.0) {$M ^\dag$};
    \end{pgfonlayer}
    \draw [fill=\fillA] (0.5,2.0)
        to (0.5,1.0)
        to (1.5,1.0)
        to (1.5,2.0);
    \draw (0.5,0) to (0.5,1.5);
    \draw (1.5,0) to (1.5,1.5);
    \node [fill=white, rounded corners=1pt, inner sep=1pt] at (1,1.6) {$n$};
\end{tikzpicture}
\end{aligned}
\end{equation}
By the definition of horizontally unitary map, this gives us our result, since $M$ is vertically unitary. The converse direction is the reversal of this argument.
\end{proof}

\ignore{
\subsubsection*{Impossibility of W-state teleportation}

To apply this theory, we consider measurement in a basis of W states. An orthonormal basis of the 3\-qubit state space was presented in \cite{mb05-dme}, such that each element of the basis is locally unitarily equivalent to the standard W state (\ket{\W_1} below). The basis consists of the following vectors:
\def\prefactor{\textstyle \frac{1}{\sqrt{3}}}
\newcommand\wstate[1]{\textstyle \frac{1}{\sqrt{3}} \big( \hspace{-3pt} #1 \big)}
\begin{align*}
\ket{\W_1} &= \wstate{+\ket{001} + \ket{010} + \ket{100}}
\\
\ket{\W_2} &= \wstate{+\ket{000} + \ket{011} - \ket{101}}
\\
\ket{\W_3} &= \wstate{-\ket{011} + \ket{000} + \ket{110}}
\\
\ket{\W_4} &= \wstate{-\ket{010} + \ket{001} - \ket{111}}
\\
\ket{\W_5} &= \wstate{+\ket{101} - \ket{110} + \ket{000}}
\\
\ket{\W_6} &= \wstate{+\ket{100} - \ket{111} - \ket{001}}
\\
\ket{\W_7} &= \wstate{-\ket{111} - \ket{100} + \ket{010}}
\\
\ket{\W_8} &= \wstate{-\ket{110} - \ket{101} - \ket{011}}
\end{align*}
Suppose we measure 3 qubits in this basis. Using the formalism introduced above, we can ask whether it is possible to then perform some controlled operation which results in the successful teleportation of  one of the originally-measured qubits. By expression~\eqref{eq:teleport-retractible}, this is possible if and only if the following 2\-cell is retractible:
\begin{equation}
\label{eq:wteleport}
\begin{aligned}
\begin{tikzpicture}[scale=1.3]
    \node (v) [Vertex] at (0.9,1.3) {};
    \draw [fill=\myfill, draw=none] (-0.1,0.5)
        to (-0.1,1)
        to [out=up, in=left] (0.3,1.7)
        to [out=right, in=\nwangle] (v.nw)
        to (v.ne)
        to [out=\neangle, in=down] (1.5,2)
        to (1.5,2.7)
        to ([xshift=-0.8cm] current bounding box.north west)
        to (current bounding box.south west);
    \draw [triple] (-0.1,0.5)
        to (-0.1,1)
        to [out=up, in=left] (0.3,1.7)
        to [out=right, in=\nwangle] (v.nw)
        to (v.ne)
        to [out=\neangle, in=down] (1.5,2)
        to (1.5,2.7);
    \draw [thick] (v.sA) to (v.sA |- 0.5,0.5);
    \node [fill=white, inner sep=1pt, rounded corners=2pt]
        at (0.3,2.2) {W};
    \node at (0.9,0.5) [below] {$Q\vphantom{{}^{\otimes 3}}$};
    \node at (-0.1,0.5) [below] {$Q^{\otimes 3}$};
    \node at (1.5,2.7) [above] {$Q^{\otimes 3}$};
    \node at (2.1,2.7) [above] {$Q$};
    \node at (2.7,2.7) [above] {$Q$};
    \draw [thick] (v.sB)
        to [out=down, in=down, looseness=1] (2.7,1.3)
        to (2.7,2.7);
    \draw [thick] (v.sC)
        to [out=down, in=down] (2.1,1.3)
        to (2.1,2.7);
\end{tikzpicture}
\end{aligned}
\end{equation}
Performing the appropriate calculation, it turns out that this composite is \emph{not} retractable. As a result, measurement in a basis of W states cannot form part of a quantum teleportation protocol. Thanks to our analysis above, we know that this holds regardless of the entangled resource shared by the two parties. This result seems to be new, and sheds light on the computational power of higher entangled states. One might ask what extra computational power \textit{at all} is given by a measurement in a basis of W~states, as compared to a measurement in a separable basis. Although some work has been done on the computational power of W~states~\cite{dp06-cpw}, a good answer to this question does not suggest itself.

In contrast, when the basis consists of maximally-entangled GHZ states, the analogous composite to~\eqref{eq:wteleport} \textit{does} have a retraction. Teleportation built on a GHZ-basis measurement is therefore physically achievable.
}

\subsection{Complementarity}

\subsubsection*{Introduction}

Suppose we have a pair of orthonormal bases $a_i, b_i$ for the same Hilbert space $H$. We say they are \emph{complementary} if
\begin{equation}
|\langle a_i | b_j \rangle| ^2 = \frac{1}{\dim H}
\end{equation}
for all $i,j$ indexing the basis elements. Such pairs of bases have the attractive property that complete knowledge of a system's state in one basis implies complete ignorance of its state in the other basis. A categorical axiomatization of the complementarity condition has been given by Coecke and Duncan~\cite{cd07-gcnc}. We will use our formalism to describe two completely new axiomatizations, one physical and one mathematical, and then demonstrate equivalence between our axioms and that of Coecke and Duncan.

\subsubsection*{Axiomatization}

Complementarity has a clear physical meaning: if we measure our system with respect to the basis $a_i$, and then again with respect to the basis $b_i$, the results are uncorrelated. Using our graphical calculus, drawing one observable in red and the other in green, we can represent two sequential measurements of this kind in the following way:
\def\compscale{0.7}
\begin{equation}
\begin{aligned}
\begin{tikzpicture}[scale=\compscale, thick]
\node (a) [Vertex, vertex colour=red] at (0.5,0.25  ) {};
\node (b) [Vertex, vertex colour=red] at (1.5, 2.5) {};
\node (c) [Vertex, vertex colour=green] at (1.5, 3.00) {};
\draw (0.5,-0.25 -| a.center) to (a.center);
\draw [fill=\fillcomp] (-1,4)
    to (-1,2)
    to [out=down, in=\nwangle, out looseness=1.5] (a.center)
    to [out=\neangle, in=down, in looseness=1.5] (2,2)
    to [out=up, in=\seangle] (b.center)
    to [out=\swangle, in=up] +(-0.5,-0.5)
    to [out=down, in=down, looseness=2] +(-1,0)
    to (0.0,4);
\draw (b.center)
    to [out=up, in=down] (c.center);
\draw [fill=\fillcomp] (1,4) to (1,3.5)
    to [out=down, in=\nwangle] (c.center)
    to [out=\neangle, in=down] +(0.5,0.5)
    to (2,4);
\end{tikzpicture}
\end{aligned}
\end{equation}
In order, this composite corresponds to performing a nondegenerate measurement in the red basis, copying the measurement result, encoding the second copy as the state of a new quantum system using the red basis, and performing a nondegenerate measurement in the green basis on this new system. To say that the results are uncorrelated is to say that, up to the application of a global phase which could depend on each measurement outcome, the resulting total quantum state factorizes. We can think of such a choice of phases as a `controlled phase', dependent on the two measurement outcomes. Such a controlled phase has the following representation in our formalism:
\begin{equation}
\begin{aligned}
\begin{tikzpicture}[scale=\compscale, thick]
\path [fill=\fillcomp] (-0.25,-0.5) to (-0.25,1.5) to (1,1.5) to (1,-0.5);
\path [fill=\fillcomp] (2,-0.5) to (2,1.5) to (3.25,1.5) to (3.25,-0.5);
\draw (1,-0.5) to (1,1.5);
\draw (2,-0.5) to (2,1.5);
\node [minimum width=30pt, draw, fill=white] at (1.5,0.5) {$\phi$};
\end{tikzpicture}
\end{aligned}
\end{equation}
This is an endomorphism of the empty diagram, corresponding to the Hilbert space $\C$, controlled by the values of both types of classical data.

We combine these ideas to obtain an equational realization of our mathematical condition. We define a pair of nondegenerate measurements indexed by classical information of dimension~$n$ to be \emph{physically complementary} if there exists a controlled phase $\phi$ satisfying the following equation:
\begin{equation}
\label{eq:physicallycomplementary}
\begin{aligned}
\begin{tikzpicture}[scale=\compscale, thick]
\node (a) [Vertex, vertex colour=red] at (0.5,0.25  ) {};
\node (b) [Vertex, vertex colour=red] at (1.5, 2.5) {};
\node (c) [Vertex, vertex colour=green] at (1.5, 3.00) {};
\draw (0.5,-0.25 -| a.center) to (a.center);
\draw [fill=\fillcomp] (-1,4)
    to (-1,2)
    to [out=down, in=\nwangle, out looseness=1.5] (a.center)
    to [out=\neangle, in=down, in looseness=1.5] (2,2)
    to [out=up, in=\seangle] (b.center)
    to [out=\swangle, in=up] +(-0.5,-0.5)
    to [out=down, in=down, looseness=2] +(-1,0)
    to (0.0,4);
\draw (b.center)
    to [out=up, in=down] (c.center);
\draw [fill=\fillcomp] (1,4) to (1,3.5)
    to [out=down, in=\nwangle] (c.center)
    to [out=\neangle, in=down] +(0.5,0.5)
    to (2,4);
\end{tikzpicture}
\end{aligned}
\quad=\quad
\frac{1}{\sqrt{n}}\,
\begin{aligned}
\begin{tikzpicture}[scale=\compscale, thick]
\node (a) [Vertex, vertex colour=red] at (-0.5,2) {};
\draw (-0.5,1.5) to (a.center);
\draw [fill=\fillcomp] (-1,5.75)
    to (-1,2.5)
    to [out=down, in=\nwangle] (a.center)
    to [out=\neangle, in=down] (0,2.5)
    to +(0,3.25);
\draw [fill=\fillcomp] (1,5.75)
    to (1,3)
    to [out=down, in=left] (1.5,2.5)
    to [out=right, in=down] +(0.5,0.5)
    to +(0,2.75);
\node [minimum width=30pt, draw, fill=white] at (0.5,4.25) {$\phi$};
\end{tikzpicture}
\end{aligned}
\end{equation}
The normalization constant ensures that the vertex representing uniform creation of classical data is an isometry.

We define the following vertices to represent the multiplications and comultiplications of the commutative \dag\-Frobenius algebras arising from our two measurement bases, in the manner of Theorem~\ref{thm:measurementclassicalstructure}:
\def\tempwidth{0.1cm}
\def\tempscale{0.3}
\begin{calign}
\begin{aligned}
\begin{tikzpicture}[scale=\tempscale]
\node (a) [Vertex, vertex colour=red] at (0,0) {};
\draw [thick] (0,2.5) to (a.center);
\draw [thick] (a.center) to [out=\swangle, in=up] (-1,-2.5);
\draw [thick] (a.center) to [out=\seangle, in=up] (1,-2.5);
\end{tikzpicture}
\end{aligned}
\hspace{\tempwidth}
:=
\hspace{\tempwidth}
\begin{aligned}
\begin{tikzpicture}[scale=\tempscale, thick]
    \node (V) [Vertex, vertex colour=red] at (1.5,0.5) {};
    \node (W) [Vertex, vertex colour=red] at (2.5,3.5) {};
    \node (X) [Vertex, vertex colour=red] at (3.5,0.5) {};
    \draw [fill=\fillcomp, draw=none]
        (2.5,3.5)
        to [out=\seangle, in=up] (3,3)
        to [out=down, in=up, in looseness=1.5] (4,1)
        to [out=down, in=\neangle] (3.5,0.5)
        to [in=down, out=\nwangle] (3,1)
        to [out=up, in=up, looseness=1.5] (2,1)
        to [out=down, in=\neangle] (V.center)
        to [out=\nwangle, in=down] (1,1)
        to [out=up, in=down, out looseness=1.5] (2,3)
        to [out=up, in=\swangle] (2.5,3.5);
    \draw
        (2.5,3.5)
        to [out=\seangle, in=up] (3,3)
        to [out=down, in=up, in looseness=1.5] (4,1)
        to [out=down, in=\neangle]  (3.5,0.5);
    \draw [thick] (3.5,0)
        to (3.5,0.5)
        to [out=\nwangle, in=down] (3,1)
        to [out=up, in=up, looseness=1.5] (2,1)
        to [out=down, in=\neangle] (V.center);
    \draw [thick] (2.5,4) to (2.5,3.5)
        to [out=\swangle, in=up] (2,3)
        to [out=down, in=up, in looseness=1.5] (1,1)
        to [out=down, in=\nwangle]
            node [auto, swap, pos=0.4, inner sep=1pt] {}
            (V.center);
    \draw (V.center) to +(0,-1);
    \draw (W.center) to +(0,1);
    \draw (X.center) to +(0,-1);
\end{tikzpicture}
\end{aligned}
&
\begin{aligned}
\begin{tikzpicture}[scale=\tempscale, yscale=-1]
\node (a) [Vertex, vertex colour=red] at (0,0) {};
\draw [thick] (0,2.5) to (a.center);
\draw [thick] (a.center) to [out=\swangle, in=up] (-1,-2.5);
\draw [thick] (a.center) to [out=\seangle, in=up] (1,-2.5);
\end{tikzpicture}
\end{aligned}
\hspace{\tempwidth}
:=
\hspace{\tempwidth}
\begin{aligned}
\begin{tikzpicture}[scale=\tempscale, yscale=-1]
    \node (V) [Vertex, vertex colour=red] at (1.5,0.5) {};
    \node (W) [Vertex, vertex colour=red] at (2.5,3.5) {};
    \node (X) [Vertex, vertex colour=red] at (3.5,0.5) {};
    \draw [fill=\fillcomp, draw=none]
        (2.5,3.5)
        to [out=\seangle, in=up] (3,3)
        to [out=down, in=up, in looseness=1.5] (4,1)
        to [out=down, in=\neangle] (3.5,0.5)
        to [in=down, out=\nwangle] (3,1)
        to [out=up, in=up, looseness=1.5] (2,1)
        to [out=down, in=\neangle] (V.center)
        to [out=\nwangle, in=down] (1,1)
        to [out=up, in=down, out looseness=1.5] (2,3)
        to [out=up, in=\swangle] (2.5,3.5);
    \draw [thick]
        (2.5,3.5)
        to [out=\seangle, in=up] (3,3)
        to [out=down, in=up, in looseness=1.5] (4,1)
        to [out=down, in=\neangle]  (3.5,0.5);
    \draw [thick] (3.5,0)
        to (3.5,0.5)
        to [out=\nwangle, in=down] (3,1)
        to [out=up, in=up, looseness=1.5] (2,1)
        to [out=down, in=\neangle] (V.center);
    \draw [thick]
        (1.5,0)
        to node [auto, swap] {} (1.5,0.5);
    \draw [thick] (2.5,4) to (2.5,3.5)
        to [out=\swangle, in=up] (2,3)
        to [out=down, in=up, in looseness=1.5] (1,1)
        to [out=down, in=\nwangle]
            node [auto, swap, pos=0.4, inner sep=1pt] {}
            (V.center);
    \draw [thick] (V.center)
        to [out=\neangle, in=down] (2,1);
    \draw [thick] (V.center) to +(0,-1);
    \draw [thick] (W.center) to +(0,1);
    \draw [thick] (X.center) to +(0,-1);
\end{tikzpicture}
\end{aligned}
&
\begin{aligned}
\begin{tikzpicture}[scale=\tempscale]
\node (a) [Vertex, vertex colour=red] at (0,0) {};
\draw [thick] (0,2.5) to (a.center);
\draw [white] (0,-2.5) to (a.center);
\end{tikzpicture}
\end{aligned}
\hspace{\tempwidth}
:=
\hspace{\tempwidth}
\begin{aligned}
\begin{tikzpicture}[scale=\tempscale]
\node (a) [Vertex, vertex colour=red] at (0,0.5) {};
\draw [thick] (0,2.5) to (a.center);
\draw [white] (0,-2.5) to (a.center);
\draw [thick, fill=\fillcomp] (0,0.5)
    to [out=\seangle, in=up] (0.5,0)
    to [out=down, in=down, looseness=2] (-0.5,0)
    to [out=up, in=\swangle] (0,0.5);
\end{tikzpicture}
\end{aligned}
&
\begin{aligned}
\begin{tikzpicture}[scale=\tempscale, yscale=-1]
\node (a) [Vertex, vertex colour=red] at (0,0) {};
\draw [thick] (0,2.5) to (a.center);
\draw [white] (0,-2.5) to (a.center);
\end{tikzpicture}
\end{aligned}
\hspace{\tempwidth}
:=
\hspace{\tempwidth}
\begin{aligned}
\begin{tikzpicture}[scale=\tempscale, yscale=-1]
\node (a) [Vertex, vertex colour=red] at (0,0.5) {};
\draw [thick] (0,2.5) to (a.center);
\draw [white] (0,-2.5) to (a.center);
\draw [thick, fill=\fillcomp] (0,0.5)
    to [out=\seangle, in=up] (0.5,0)
    to [out=down, in=down, looseness=2] (-0.5,0)
    to [out=up, in=\swangle] (0,0.5);
\end{tikzpicture}
\end{aligned}
\\
\begin{aligned}
\begin{tikzpicture}[scale=\tempscale]
\node (a) [Vertex, vertex colour=green] at (0,0) {};
\draw [thick] (0,2.5) to (a.center);
\draw [thick] (a.center) to [out=\swangle, in=up] (-1,-2.5);
\draw [thick] (a.center) to [out=\seangle, in=up] (1,-2.5);
\end{tikzpicture}
\end{aligned}
\hspace{\tempwidth}
:=
\hspace{\tempwidth}
\begin{aligned}
\begin{tikzpicture}[scale=\tempscale]
    \node (V) [Vertex, vertex colour=green] at (1.5,0.5) {};
    \node (W) [Vertex, vertex colour=green] at (2.5,3.5) {};
    \node (X) [Vertex, vertex colour=green] at (3.5,0.5) {};
    \draw [fill=\fillcomp, draw=none]
        (2.5,3.5)
        to [out=\seangle, in=up] (3,3)
        to [out=down, in=up, in looseness=1.5] (4,1)
        to [out=down, in=\neangle] (3.5,0.5)
        to [in=down, out=\nwangle] (3,1)
        to [out=up, in=up, looseness=1.5] (2,1)
        to [out=down, in=\neangle] (V.center)
        to [out=\nwangle, in=down] (1,1)
        to [out=up, in=down, out looseness=1.5] (2,3)
        to [out=up, in=\swangle] (2.5,3.5);
    \draw [thick]
        (2.5,3.5)
        to [out=\seangle, in=up] (3,3)
        to [out=down, in=up, in looseness=1.5] (4,1)
        to [out=down, in=\neangle]  (3.5,0.5);
    \draw [thick] (3.5,0)
        to (3.5,0.5)
        to [out=\nwangle, in=down] (3,1)
        to [out=up, in=up, looseness=1.5] (2,1)
        to [out=down, in=\neangle] (V.center);
    \draw [thick]
        (1.5,0)
        to node [auto, swap] {} (1.5,0.5);
    \draw [thick] (2.5,4) to (2.5,3.5)
        to [out=\swangle, in=up] (2,3)
        to [out=down, in=up, in looseness=1.5] (1,1)
        to [out=down, in=\nwangle]
            node [auto, swap, pos=0.4, inner sep=1pt] {}
            (V.center);
    \draw [thick] (V.center)
        to [out=\neangle, in=down] (2,1);
    \draw [thick] (V.center) to +(0,-1);
    \draw [thick] (W.center) to +(0,1);
    \draw [thick] (X.center) to +(0,-1);
\end{tikzpicture}
\end{aligned}
&
\begin{aligned}
\begin{tikzpicture}[scale=\tempscale, yscale=-1]
\node (a) [Vertex, vertex colour=green] at (0,0) {};
\draw [thick] (0,2.5) to (a.center);
\draw [thick] (a.center) to [out=\swangle, in=up] (-1,-2.5);
\draw [thick] (a.center) to [out=\seangle, in=up] (1,-2.5);
\end{tikzpicture}
\end{aligned}
\hspace{\tempwidth}
:=
\hspace{\tempwidth}
\begin{aligned}
\begin{tikzpicture}[scale=\tempscale, yscale=-1]
    \node (V) [Vertex, vertex colour=green] at (1.5,0.5) {};
    \node (W) [Vertex, vertex colour=green] at (2.5,3.5) {};
    \node (X) [Vertex, vertex colour=green] at (3.5,0.5) {};
    \draw [fill=\fillcomp, draw=none]
        (2.5,3.5)
        to [out=\seangle, in=up] (3,3)
        to [out=down, in=up, in looseness=1.5] (4,1)
        to [out=down, in=\neangle] (3.5,0.5)
        to [in=down, out=\nwangle] (3,1)
        to [out=up, in=up, looseness=1.5] (2,1)
        to [out=down, in=\neangle] (V.center)
        to [out=\nwangle, in=down] (1,1)
        to [out=up, in=down, out looseness=1.5] (2,3)
        to [out=up, in=\swangle] (2.5,3.5);
    \draw [thick]
        (2.5,3.5)
        to [out=\seangle, in=up] (3,3)
        to [out=down, in=up, in looseness=1.5] (4,1)
        to [out=down, in=\neangle]  (3.5,0.5);
    \draw [thick] (3.5,0)
        to (3.5,0.5)
        to [out=\nwangle, in=down] (3,1)
        to [out=up, in=up, looseness=1.5] (2,1)
        to [out=down, in=\neangle] (V.center);
    \draw [thick]
        (1.5,0)
        to node [auto, swap] {} (1.5,0.5);
    \draw [thick] (2.5,4) to (2.5,3.5)
        to [out=\swangle, in=up] (2,3)
        to [out=down, in=up, in looseness=1.5] (1,1)
        to [out=down, in=\nwangle]
            node [auto, swap, pos=0.4, inner sep=1pt] {}
            (V.center);
    \draw [thick] (V.center)
        to [out=\neangle, in=down] (2,1);
    \draw [thick] (V.center) to +(0,-1);
    \draw [thick] (W.center) to +(0,1);
    \draw [thick] (X.center) to +(0,-1);
\end{tikzpicture}
\end{aligned}
&
\begin{aligned}
\begin{tikzpicture}[scale=\tempscale]
\node (a) [Vertex, vertex colour=green] at (0,0) {};
\draw [thick] (0,2.5) to (a.center);
\draw [white] (0,-2.5) to (a.center);
\end{tikzpicture}
\end{aligned}
\hspace{\tempwidth}
:=
\hspace{\tempwidth}
\begin{aligned}
\begin{tikzpicture}[scale=\tempscale]
\node (a) [Vertex, vertex colour=green] at (0,0.5) {};
\draw [thick] (0,2.5) to (a.center);
\draw [white] (0,-2.5) to (a.center);
\draw [thick, fill=\fillcomp] (0,0.5)
    to [out=\seangle, in=up] (0.5,0)
    to [out=down, in=down, looseness=2] (-0.5,0)
    to [out=up, in=\swangle] (0,0.5);
\end{tikzpicture}
\end{aligned}
&
\begin{aligned}
\begin{tikzpicture}[scale=\tempscale, yscale=-1]
\node (a) [Vertex, vertex colour=green] at (0,0) {};
\draw [thick] (0,2.5) to (a.center);
\draw [white] (0,-2.5) to (a.center);
\end{tikzpicture}
\end{aligned}
\hspace{\tempwidth}
:=
\hspace{\tempwidth}
\begin{aligned}
\begin{tikzpicture}[scale=\tempscale, yscale=-1]
\node (a) [Vertex, vertex colour=green] at (0,0.5) {};
\draw [thick] (0,2.5) to (a.center);
\draw [white] (0,-2.5) to (a.center);
\draw [thick, fill=\fillcomp] (0,0.5)
    to [out=\seangle, in=up] (0.5,0)
    to [out=down, in=down, looseness=2] (-0.5,0)
    to [out=up, in=\swangle] (0,0.5);
\end{tikzpicture}
\end{aligned}
\end{calign}
We now use these to demonstrate equivalences between three different axiomatizations of the complementarity condition. Condition 2 below stands out as the simplest of these.

\begin{theorem}
\label{thm:complementary}
Given a pair of nondegenerate measurements indexed by classical information of dimension $n$, the following properties are equivalent:
\begin{enumerate}
\item There exists a controlled phase $\phi$ satisfying the physical complementarity condition:
\[
\begin{aligned}
\begin{tikzpicture}[scale=0.7, thick]
\node (a) [Vertex, vertex colour=red] at (0.5,0.25  ) {};
\node (b) [Vertex, vertex colour=red] at (1.5, 2.5) {};
\node (c) [Vertex, vertex colour=green] at (1.5, 3.00) {};
\draw (0.5,-0.25 -| a.center) to (a.center);
\draw [fill=\fillcomp] (-1,4)
    to (-1,2)
    to [out=down, in=\nwangle, out looseness=1.5] (a.center)
    to [out=\neangle, in=down, in looseness=1.5] (2,2)
    to [out=up, in=\seangle] (b.center)
    to [out=\swangle, in=up] +(-0.5,-0.5)
    to [out=down, in=down, looseness=2] +(-1,0)
    to (0.0,4);
\draw (b.center)
    to [out=up, in=down] (c.center);
\draw [fill=\fillcomp] (1,4) to (1,3.5)
    to [out=down, in=\nwangle] (c.center)
    to [out=\neangle, in=down] +(0.5,0.5)
    to (2,4);
\end{tikzpicture}
\end{aligned}
\quad=\quad
\frac{1}{\sqrt{n}}\,
\begin{aligned}
\begin{tikzpicture}[scale=\compscale, thick]
\node (a) [Vertex, vertex colour=red] at (-0.5,2) {};
\draw (-0.5,1.5) to (a.center);
\draw [fill=\fillcomp] (-1,5.75)
    to (-1,2.5)
    to [out=down, in=\nwangle] (a.center)
    to [out=\neangle, in=down] (0,2.5)
    to +(0,3.25);
\draw [fill=\fillcomp] (1,5.75)
    to (1,3)
    to [out=down, in=left] (1.5,2.5)
    to [out=right, in=down] +(0.5,0.5)
    to +(0,2.75);
\node [minimum width=30pt, draw, fill=white] at (0.5,4.25) {$\phi$};
\end{tikzpicture}
\end{aligned}
\]
\item The following composite is horizontally unitary:
\begin{equation}
\label{eq:compcomposite}
\sqrt{n} \, 
\begin{aligned}
\begin{tikzpicture}[scale=0.7]
\node (R) [Vertex, vertex colour=red] at (0,1.5) {};
\node (G) [Vertex, vertex colour=green] at (0,2.5) {};
\draw [fill=\fillcomp, thick] (0.5,0.5)
    to [out=up, in=down] (0.5,1)
    to [out=up, in=\seangle] (R.center)
    to [out=\swangle, in=up] (-0.5,1.0)
    to (-0.5,0.5);
\draw [fill=\fillcomp, thick] (-0.5,3.5)
    to (-0.5,3) 
    to [out=down, in=\nwangle] (G.center)
    to [out=\neangle, in=down] (0.5,3)
    to (0.5,3.5);
\draw [thick] (G.center) to (R.center);
\end{tikzpicture}
\end{aligned}
\end{equation}
\item The condition of Coecke and Duncan~\cite[Theorem~8.10]{cd07-gcnc} holds:
\begin{equation}
n
\begin{aligned}
\begin{tikzpicture}[thick]
\node (G) [Vertex, vertex colour=red] at (0,1.5) {};
\node (R) [Vertex, vertex colour=green] at (0.5,-0.5) {};
\node (g1) [Vertex, vertex colour=red] at (-0.5,-0.5) {};
\node (r1) [Vertex, vertex colour=green] at (0,0.5) {};
\node (g2) [Vertex, vertex colour=red] at (-0.5,-0.9) {};
\node (r2) [Vertex, vertex colour=green] at (0,0.9) {};
\draw (G.center) to +(0,0.75);
\draw (G.center) to [out=\swangle, in=\nwangle] (g1.center);
\draw (g1.center) to [out=\neangle, in=\swangle] (r1.center);
\draw (r1.center) to [out=\seangle, in=\nwangle] (R.center);
\draw (G.center) to [out=\seangle, in=\neangle] (R.center);
\draw (R.center) to +(0,-0.75);
\draw (r1.center) to (r2.center);
\draw (g1.center) to (g2.center);
\end{tikzpicture}
\end{aligned}
\quad=\quad\,\,\,\,
\begin{aligned}
\begin{tikzpicture}[thick]
\node (G) [Vertex, vertex colour=red] at (0,0.5) {};
\node (R) [Vertex, vertex colour=green] at (0,-1.5) {};
\draw (G.center) to +(0,0.75);
\draw (R.center) to +(0,-0.75);
\end{tikzpicture}
\end{aligned}
\end{equation}
\end{enumerate}
\end{theorem}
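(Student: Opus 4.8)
The plan is to treat condition~2 as the hub and establish the two equivalences $(1)\Leftrightarrow(2)$ and $(2)\Leftrightarrow(3)$ separately, working inside the graphical calculus so that only the abstract axioms are invoked. Throughout I will write $M_r$ and $M_g$ for the two nondegenerate measurement 2-cells, which are vertically unitary by hypothesis, and I will freely use the commutative $\dag$-Frobenius structures they induce by Theorem~\ref{thm:measurementclassicalstructure}, together with the ambidextrous duality equations~\eqref{eq:yank1}--\eqref{eq:yank4} and the hole-deletion (specialness) axiom~\eqref{eq:copycompare}. Writing $W$ for the composite appearing in~\eqref{eq:compcomposite} (the green comultiplication post-composed with the red multiplication, scaled by $\sqrt n$), the whole proof amounts to recognizing $W$ as the common algebraic core of all three conditions.

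For $(1)\Leftrightarrow(2)$, I would first observe that the initial red measurement of the incoming system is common to both sides of~\eqref{eq:physicallycomplementary}, so composing with $M_r^\dag$ cancels it by vertical unitarity. Next, using vertical unitarity of $M_g$ and the topological axioms~\eqref{eq:yank1}--\eqref{eq:yank4} to straighten the resulting bends, the green measurement leg on the left is absorbed into the green comultiplication, and the green-create leg on the right into the green unit. What remains is an equation built purely from the red multiplication and the green comultiplication, i.e.\ from $W$, asserting that there is a controlled phase $\phi$ for which $W$ factorizes through its boundary duality data scaled by $\phi$. I would then recognize this as exactly the statement that $W$ admits a horizontal inverse equal to $W^\dag$ up to the gauge freedom in the duality cups and caps, and that this gauge freedom is precisely a diagonal family of unit complex numbers, that is, a controlled phase. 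Hence existence of $\phi$ is equivalent to horizontal unitarity of $W$, which is condition~2.

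For $(2)\Leftrightarrow(3)$, I would apply Lemma~\ref{lem:horizontalfromvertical} to rewrite horizontal unitarity of~\eqref{eq:compcomposite} as vertical unitarity of one of its bent forms, say $X$. Using the ambidextrous self-duality of the Frobenius algebras to slide the boundary legs, the Frobenius and commutativity laws to reassociate the interleaved red and green vertices, and specialness~\eqref{eq:copycompare} to collapse the closed loops that appear, the single equation $X^\dag\cdot X=\id$ transforms term-by-term into the Coecke--Duncan diagram of condition~3, with the normalizing factors $\sqrt n$ and $n$ tracked through the dimension loops. By Lemma~\ref{lem:fg1gf1} the equation $X^\dag\cdot X=\id$ already forces $X\cdot X^\dag=\id$, so I need not verify the reversed equation; this is exactly what makes the reduction land on the \emph{single} Coecke--Duncan equation rather than on a pair.

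The main obstacle I anticipate is the scalar and phase bookkeeping at the two interfaces. In $(1)\Leftrightarrow(2)$ the delicate point is to show that the controlled phase $\phi$ parametrizes exactly the freedom in the horizontal inverse of $W$: a horizontally unitary map is unitary only up to the choice of duality 2-cells witnessing the adjunctions of its boundary 1-cells, and this choice contributes precisely a diagonal unitary, so I must match that diagonal with $\phi$ and confirm that the $1/\sqrt n$ normalization is consistent with $W$ being an isometry. In $(2)\Leftrightarrow(3)$ the analogous care is needed to track the factors of $n$ produced by the closed loops when specialness is applied, ensuring that the final equation carries exactly the factor $n$ displayed in condition~3.
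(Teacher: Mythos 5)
Your proposal is correct and follows essentially the same route as the paper's own proof: the paper also treats condition~2 as the hub, establishing $1\Leftrightarrow 2$ by cancelling the initial red measurement vertex and bending wires with the comparison and copying 2-cells and the hole-deletion axiom~\eqref{eq:copycompare} until the bent form of~\eqref{eq:compcomposite} is identified with $\frac{1}{\sqrt{n}}\,\phi$, and establishing $2\Leftrightarrow 3$ by the same bent-form rewriting of horizontal unitarity (via Lemma~\ref{lem:horizontalfromvertical}, with one inverse equation sufficing as you note) into the single Coecke--Duncan equation, tracking the factors of $n$ through the closed loops. Your ``gauge freedom in the cups and caps'' phrasing is an inessential variant of the paper's more direct observation that the bent composite is itself an endomorphism of controlled-phase type, which is vertically unitary precisely when the required $\phi$ exists.
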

\begin{proof}
We will show $1 \Leftrightarrow 2$ and $2 \Leftrightarrow 3$.

The implication $1 \Leftrightarrow 2$ goes as follows:
\def\compscale{0.5}
\begin{calign}
\nonumber
\begin{aligned}
\begin{tikzpicture}[scale=\compscale, thick]
\node (a) [Vertex, vertex colour=red] at (0.5,0.25  ) {};
\node (b) [Vertex, vertex colour=red] at (1.5, 2.5) {};
\node (c) [Vertex, vertex colour=green] at (1.5, 3.5) {};
\draw (0.5,-0.5 -| a.center) to (a.center);
\draw [fill=\fillcomp] (-1,4.5)
    to (-1,2)
    to [out=down, in=\nwangle, out looseness=1.5] (a.center)
    to [out=\neangle, in=down, in looseness=1.5] (2,2)
    to [out=up, in=\seangle] (b.center)
    to [out=\swangle, in=up] +(-0.5,-0.5)
    to [out=down, in=down, looseness=2] +(-1,0)
    to (0.0,4.5);
\draw (b.center)
    to [out=up, in=down] (c.center);
\draw [fill=\fillcomp] (1,4.5) to (1,4)
    to [out=down, in=\nwangle] (c.center)
    to [out=\neangle, in=down] +(0.5,0.5)
    to (2,4.5);
\end{tikzpicture}
\end{aligned}
\,\,\,\,=\,\,\, \frac{1}{\sqrt{n}} \,\,
\begin{aligned}
\begin{tikzpicture}[scale=\compscale, thick]
\node (a) [Vertex, vertex colour=red] at (-0.5,1.75) {};
\draw (-0.5,1.0) to (a.center);
\draw [fill=\fillcomp] (-1,6)
    to (-1,2.5)
    to [out=down, in=\nwangle] (a.center)
    to [out=\neangle, in=down] (0,2.5)
    to +(0,3.5);
\draw [fill=\fillcomp] (1,6)
    to (1,3)
    to [out=down, in=left] (1.5,2.5)
    to [out=right, in=down] +(0.5,0.5)
    to +(0,3);
\node [minimum width=30pt, draw, fill=white] at (0.5,4.25) {$\phi$};
\end{tikzpicture}
\end{aligned}
\hspace{130pt}
\\
\nonumber
\Leftrightarrow 
\qquad
\begin{aligned}
\begin{tikzpicture}[scale=\compscale]
\node (R) [Vertex, vertex colour=red] at (0,1.5) {};
\node (G) [Vertex, vertex colour=green] at (0,2.5) {};
\draw [fill=\fillcomp, draw=none] (0.5,-0.5)
    to [out=up, in=down] (0.5,1)
    to [out=up, in=\seangle] (R.center)
    to [out=\swangle, in=up] (-0.5,1)
    to [out=down, in=down, looseness=2] (-1.5,1)
    to (-1.5,4.5) to (-2.5,4.5) to (-2.5,-0.5);
\draw [thick] (0.5,-0.5)
    to [out=up, in=down] (0.5,1)
    to [out=up, in=\seangle] (R.center)
    to [out=\swangle, in=up] (-0.5,1)
    to [out=down, in=down, looseness=2] (-1.5,1)
    to (-1.5,4.5);
\draw [fill=\fillcomp, draw=none] (-0.5,4.5)
    to (-0.5,3) 
    to [out=down, in=\nwangle] (G.center)
    to [out=\neangle, in=down] (0.5,3)
    to [out=up, in=up, looseness=2] (1.5,3)
    to (1.5,-0.5)
    to (2.5,-0.5)
    to (2.5,4.5);
\draw [thick] (-0.5,4.5)
    to (-0.5,3) 
    to [out=down, in=\nwangle] (G.center)
    to [out=\neangle, in=down] (0.5,3)
    to [out=up, in=up, looseness=2] (1.5,3)
    to (1.5,-0.5);
\draw [thick] (G.center) to (R.center);
\draw [thick] (-2.5,-0.5) to (-2.5,4.5);
\end{tikzpicture}
\end{aligned}
\,\,\,\,=\,\,\,\frac{1}{\sqrt{n}} \,\,
\begin{aligned}
\begin{tikzpicture}[scale=\compscale, thick]
\path [fill=\fillcomp] (0,-2) to (0,3) to (1,3) to (1,-2);
\path [fill=\fillcomp] (2,-2) to (2,3) to (3,3) to (3,-2);
\draw (1,-2) to (1,3);
\draw (2,-2) to (2,3);
\node [minimum width=30pt, draw, fill=white] at (1.5,0.5) {$\phi$};
\draw [thick] (0,-2) to (0,3);
\end{tikzpicture}
\end{aligned}
\\
\nonumber
\Leftrightarrow 
\qquad
\begin{aligned}
\begin{tikzpicture}[scale=\compscale]
\node (R) [Vertex, vertex colour=red] at (0,1.5) {};
\node (G) [Vertex, vertex colour=green] at (0,2.5) {};
\draw [fill=\fillcomp, draw=none] (0.5,-0.5)
    to [out=up, in=down] (0.5,1)
    to [out=up, in=\seangle] (R.center)
    to [out=\swangle, in=up] (-0.5,1)
    to [out=down, in=down, looseness=2] (-1.5,1)
    to (-1.5,4.5) to (-2.5,4.5) to (-2.5,-0.5);
\draw [thick] (0.5,-0.5)
    to [out=up, in=down] (0.5,1)
    to [out=up, in=\seangle] (R.center)
    to [out=\swangle, in=up] (-0.5,1)
    to [out=down, in=down, looseness=2] (-1.5,1)
    to (-1.5,4.5);
\draw [fill=\fillcomp, draw=none] (-0.5,4.5)
    to (-0.5,3) 
    to [out=down, in=\nwangle] (G.center)
    to [out=\neangle, in=down] (0.5,3)
    to [out=up, in=up, looseness=2] (1.5,3)
    to (1.5,-0.5)
    to (2.5,-0.5)
    to (2.5,4.5);
\draw [thick] (-0.5,4.5)
    to (-0.5,3) 
    to [out=down, in=\nwangle] (G.center)
    to [out=\neangle, in=down] (0.5,3)
    to [out=up, in=up, looseness=2] (1.5,3)
    to (1.5,-0.5);
\draw [thick] (G.center) to (R.center);
\end{tikzpicture}
\end{aligned}
\,\,\,\,=\,\,\, \frac{1}{\sqrt{n}} \,\,
\begin{aligned}
\begin{tikzpicture}[scale=\compscale, thick]
\path [fill=\fillcomp] (0,-2) to (0,3) to (1,3) to (1,-2);
\path [fill=\fillcomp] (2,-2) to (2,3) to (3,3) to (3,-2);
\draw (1,-2) to (1,3);
\draw (2,-2) to (2,3);
\node [minimum width=30pt, draw, fill=white] at (1.5,0.5) {$\phi$};
\end{tikzpicture}
\end{aligned}
\end{calign}
For the first equivalence we apply the inverse to the red measurement vertex at the bottom of both sides, and `bend down' the upper-right line using the comparison 2\-cell~\eqref{eq:compare}. For the second equivalence we bend down the top-left line using the comparison 2\-cell and compose with the copying 2\-cell, using axiom~\eqref{eq:copycompare} to remove the resulting hole. Since $\phi$ is unitary, by Lemma~\ref{lem:horizontalfromvertical} we conclude that expression~\eqref{eq:compcomposite} is horizontally unitary. This establishes proposition 2 above. This argument is reversible, so we also have $2 \Rightarrow 1$.

To prove the equivalence $2 \Leftrightarrow 3$ we use the following graphical argument.
\def\compscale{0.4}
\begin{calign}
\nonumber
n\,\,\,\,\begin{aligned}
\begin{tikzpicture}[scale=\compscale, thick]
\node (R) [Vertex, vertex colour=red] at (0,1.5) {};
\node (G) [Vertex, vertex colour=green] at (0,2.5) {};
\node (R2) [Vertex, vertex colour=red] at (0,6.5) {};
\node (G2) [Vertex, vertex colour=green] at (0,5.5) {};
\draw [fill=\fillcomp, draw=none] (0.5,-0.5)
    to [out=up, in=down] (0.5,1)
    to [out=up, in=\seangle] (R.center)
    to [out=\swangle, in=up] (-0.5,1)
    to [out=down, in=down, looseness=2] (-1.5,1)
    to (-1.5,7)
    to [out=up, in=up, looseness=2] (-0.5,7)
    to [out=down, in=\nwangle] (R2.center)
    to [out=\neangle, in=down] (0.5,7)
    to (0.5,8.5) to (-2.5,8.5) to (-2.5,-0.5);
\draw (0.5,-0.5)
    to [out=up, in=down] (0.5,1)
    to [out=up, in=\seangle] (R.center)
    to [out=\swangle, in=up] (-0.5,1)
    to [out=down, in=down, looseness=2] (-1.5,1)
    to (-1.5,7.0)
    to [out=up, in=up, looseness=2] (-0.5,7.0)
    to [out=down, in=\nwangle] (R2.center)
    to [out=\neangle, in=down] (0.5, 7.0)
    to (0.5,8.5);
\draw [thick] (G.center) to (R.center);
\draw (R2.center) to (G2.center);
\draw [fill=\fillcomp, draw=none] (G2.center)
    to [out=\swangle, in=up] (-0.5,5)
    to (-0.5,3) 
    to [out=down, in=\nwangle] (G.center)
    to [out=\neangle, in=down] (0.5,3)
    to [out=up, in=up, looseness=2] (1.5,3)
    to (1.5,-0.5)
    to (2.5,-0.5)
    to (2.5,8.5)
    to (1.5,8.5)
    to (1.5,5)
    to [out=down, in=down, looseness=2] (0.5,5)
    to [out=up, in=\seangle] (G2.center);
\draw (1.5,8.5)
    to (1.5,5)
    to [out=down, in=down, looseness=2] (0.5,5)
    to [out=up, in=\seangle] (G2.center)
    to [out=\swangle, in=up] (-0.5,5) to (-0.5,3) 
    to [out=down, in=\nwangle] (G.center)
    to [out=\neangle, in=down] (0.5,3)
    to [out=up, in=up, looseness=2] (1.5,3)
    to (1.5,-0.5);
\end{tikzpicture}
\end{aligned}
\quad=\quad
\begin{aligned}
\begin{tikzpicture}[scale=\compscale, thick]
\path [fill=\fillcomp] (0,-2) to (0,7) to (1,7) to (1,-2);
\path [fill=\fillcomp] (2,-2) to (2,7) to (3,7) to (3,-2);
\draw (1,-2) to (1,7);
\draw (2,-2) to (2,7);
\end{tikzpicture}
\end{aligned}
\qquad\Leftrightarrow \qquad
n\,\,\,\,\begin{aligned}
\begin{tikzpicture}[thick, scale=\compscale]
    \node (G1) [Vertex, vertex colour=red] at (5,3) {};
    \node (G2) [Vertex, vertex colour=red] at (1,2) {};
    \node (R1) [Vertex, vertex colour=green] at (1,3) {};
    \node (R2) [Vertex, vertex colour=green] at (5,2) {};
    \draw (G1.center) to [out=down, in=up] (R2.center);
    \draw [fill=\fillcomp] (5.5,7)
        to [out=down, in=up] (5.5,3.5)
        to [out=down, in=\neangle] (G1.center)
        to [out=\nwangle, in=down] (4.5,3.5)
        to [out=up, in=up, looseness=1.7] (-0.5,3.5)
        to (-0.5,1.5)
        to [out=down, in=down, looseness=2] (0.5,1.5)
        to [out=up, in=\swangle] (G2.center)
        to [out=\seangle, in=up] (1.5,1.5)
        to [out=down, in=down, looseness=1.8] (-1.5,1.5)
        to (-1.5,4.5)
        to [out=up, in=down] (-1.5,7);
    \draw (R1.center) to (G2.center);
    \draw [fill=\fillcomp] (2.5,-0.5) to [out=up, in=down] (2.5,1) to (2.5,3.5) to [out=up, in=up, looseness=2] (1.5,3.5) to [out=down, in=\neangle] (R1.center) to [out=\nwangle, in=down] (0.5,3.5) to [out=up, in=up, looseness=1.8] (3.5,3.5) to (3.5,1.5) to [out=down, in=down, looseness=2] (4.5,1.5) to [out=up, in=\swangle] (R2.center) to [out=\seangle, in=up] (5.5,1.5) to [out=down, in=up] (5.5,-0.5);
\end{tikzpicture}
\end{aligned}
\quad=\quad
\begin{aligned}
\begin{tikzpicture}[thick, scale=\compscale]
    \draw [fill=\fillcomp] (5.5,7) to (5.5,6) to [out=down, in=down, looseness=1.5] (2.5,6) to (2.5,7);
    \draw [fill=\fillcomp] (2.5,-0.5)
        to (2.5,0.5)
        to [out=up, in=up, looseness=1.5] (5.5,0.5)
        to (5.5,-0.5);
\end{tikzpicture}
\end{aligned}
\\
\Leftrightarrow
\qquad
n\,\,\,\,\begin{aligned}
\begin{tikzpicture}[thick, scale=\compscale]
    \node (G1) [Vertex, vertex colour=red] at (5,3) {};
    \node (G2) [Vertex, vertex colour=red] at (1,2) {};
    \node (G3) [Vertex, vertex colour=red] at (2,9) {};
    \node (G4) [Vertex, vertex colour=red] at (-1,3) {};
    \node (G5) [Vertex, vertex colour=red] at (-1,2) {};
    \node (R1) [Vertex, vertex colour=green] at (1,3) {};
    \node (R2) [Vertex, vertex colour=green] at (5,2) {};
    \node (R3) [Vertex, vertex colour=green] at (3,3) {};
    \node (R4) [Vertex, vertex colour=green] at (3,2) {};
    \node (R5) [Vertex, vertex colour=green] at (4,-1) {};
    \draw (G1.center) to [out=down, in=up] (R2.center);
    \draw [fill=\fillcomp] (2,10)
        to (G3.center)
        to [out=\seangle, in=up] (2.5,8.5)
        to [out=down, in=up, in looseness=1.5] (5.5,3.5)
        to [out=down, in=\neangle] (G1.center)
        to [out=\nwangle, in=down] (4.5,3.5)
        to [out=up, in=up, looseness=1.75] (-0.5,3.5)
        to [out=down, in=\neangle] (G4.center)
        to (G5.center)
        to [out=\seangle, in=up] (-0.5,1.5)
        to [out=down, in=down, looseness=2] (0.5,1.5)
        to [out=up, in=\swangle] (G2.center)
        to [out=\seangle, in=up] (1.5,1.5)
        to [out=down, in=down, looseness=1.8] (-1.5,1.5)
        to [out=up, in=\swangle] (G5.center)
        to (G4.center)
        to [out=\nwangle, in=down] (-1.5,3.5)
        to [out=up, in=down, out looseness=1.5] (1.5,8.5)
        to [out=up, in=\swangle] (G3.center);
    \draw (R1.center) to (G2.center);
    \draw [fill=\fillcomp] (4,-2) to (R5.center) to [out=\nwangle, in=down] (3.5,-0.5) to [out=up, in=down, in looseness=1.5] (2.5,1.5) to [out=up, in=\swangle] (R4.center) to (R3.center) to [out=\nwangle, in=down] (2.5,3.5) to [out=up, in=up, looseness=2] (1.5,3.5) to [out=down, in=\neangle] (R1.center) to [out=\nwangle, in=down] (0.5,3.5) to [out=up, in=up, looseness=1.8] (3.5,3.5) to [out=down, in=\neangle] (R3.center) to (R4.center) to [out=\seangle, in=up] (3.5,1.5) to [out=down, in=down, looseness=2] (4.5,1.5) to [out=up, in=\swangle] (R2.center) to [out=\seangle, in=up] (5.5,1.5) to [out=down, in=up, out looseness=1.5] (4.5,-0.5) to [out=down, in=\neangle] (R5.center);
\end{tikzpicture}
\end{aligned}
\quad=\quad
\begin{aligned}
\begin{tikzpicture}[thick, scale=\compscale]
\node (a) [Vertex, vertex colour=red] at (0,0.5) {};
\draw [thick] (0,2.5) to (a.center);
\draw [thick, fill=\fillcomp] (0,0.5)
    to [out=\seangle, in=up] (0.5,0)
    to [out=down, in=down, looseness=2] (-0.5,0)
    to [out=up, in=\swangle] (0,0.5);
\begin{scope}[yscale=-1, yshift=7cm]
\node (b) [Vertex, vertex colour=green] at (0,0.5) {};
\draw [thick] (0,2.5) to (b.center);
\draw [thick, fill=\fillcomp] (0,0.5)
    to [out=\seangle, in=up] (0.5,0)
    to [out=down, in=down, looseness=2] (-0.5,0)
    to [out=up, in=\swangle] (0,0.5);
\end{scope}
\end{tikzpicture}
\end{aligned}
\qquad\Leftrightarrow
\qquad
n\hspace{-5pt}
\begin{aligned}
\begin{tikzpicture}[thick]
\node (G) [Vertex, vertex colour=red] at (0,1.5) {};
\node (R) [Vertex, vertex colour=green] at (0.5,-0.5) {};
\node (g1) [Vertex, vertex colour=red] at (-0.5,-0.5) {};
\node (r1) [Vertex, vertex colour=green] at (0,0.5) {};
\node (g2) [Vertex, vertex colour=red] at (-0.5,-0.9) {};
\node (r2) [Vertex, vertex colour=green] at (0,0.9) {};
\draw (G.center) to +(0,0.75);
\draw (G.center) to [out=\swangle, in=\nwangle] (g1.center);
\draw (g1.center) to [out=\neangle, in=\swangle] (r1.center);
\draw (r1.center) to [out=\seangle, in=\nwangle] (R.center);
\draw (G.center) to [out=\seangle, in=\neangle] (R.center);
\draw (R.center) to +(0,-0.75);
\draw (r1.center) to (r2.center);
\draw (g1.center) to (g2.center);
\end{tikzpicture}
\end{aligned}
\hspace{-10pt}\quad=\quad
\begin{aligned}
\begin{tikzpicture}[thick]
\node (G) [Vertex, vertex colour=red] at (0,0.5) {};
\node (R) [Vertex, vertex colour=green] at (0,-1.5) {};
\draw (G.center) to +(0,0.75);
\draw (R.center) to +(0,-0.75);
\end{tikzpicture}
\end{aligned}
\nonumber
\end{calign}
\end{proof}

\noindent
These results on complementarity provide a good context for discussing the differences between our 2\-categorical formalism and the monoidal category formalism used by Coecke, Duncan and others. Each of the 3 equivalent properties investigated here have their own distinct identity, and each has different advantages. Property~1 is a direct mathematical translation of the physical definition of complementarity, and makes the most direct sense. Property 2 is the most mathematically elegant and minimal. And property 3 requires the least machinery, as it is entirely 1\-categorical. Our theorem also serves to `explain' the  unusual form of this axiom.

Finally, thanks to this theorem, we note that we can define complementary observables in precisely the same terms as we described solutions to the teleportation and dense coding protocols in Theorems~\ref{thm:teleportationdoublyunitary} and \ref{thm:densecodingdoublyunitary}, giving a surprising unity to the foundations of all three phenomena.
\begin{theorem}
\label{thm:complementarydoublyunitary}
A vertically unitary preparation-measurement composite
\begin{equation}
\begin{aligned}
\begin{tikzpicture}[scale=0.7]
\node (R) [Vertex, vertex colour=red] at (0,1.5) {};
\node (G) [Vertex, vertex colour=green] at (0,2.5) {};
\draw [fill=\fillcomp, thick] (0.5,0.5)
    to [out=up, in=down] (0.5,1)
    to [out=up, in=\seangle] (R.center)
    to [out=\swangle, in=up] (-0.5,1.0)
    to (-0.5,0.5);
\draw [fill=\fillcomp, thick] (-0.5,3.5)
    to (-0.5,3) 
    to [out=down, in=\nwangle] (G.center)
    to [out=\neangle, in=down] (0.5,3)
    to (0.5,3.5);
\draw [thick] (G.center) to (R.center);
\end{tikzpicture}
\end{aligned}
\end{equation}
involves a pair of complementary observables if and only if it is also horizontally unitary, up to a scalar factor.
\end{theorem}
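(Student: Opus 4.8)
The plan is to reduce the statement directly to Theorem~\ref{thm:complementary}. The first thing I would record is that the preparation-measurement composite displayed in the theorem is, up to the scalar $\sqrt n$, exactly the 2-cell appearing as Property~2 of Theorem~\ref{thm:complementary} and written explicitly in~\eqref{eq:compcomposite}: a red nondegenerate measurement read backwards as a preparation at the bottom, whose output line feeds into a green nondegenerate measurement at the top. Making this identification is what lets me apply the machinery of that theorem verbatim.

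Next I would dispense with the standing hypothesis by observing that such a composite is automatically vertically unitary, so it is not an extra constraint. Writing the composite as the vertical composite $G \cdot R$ of the preparation $R$ (the $\dagger$ of a nondegenerate measurement, hence unitary by Section~\ref{sec:dagger}) with the measurement $G$ (unitary by definition), one computes $(G\cdot R)^\dagger \cdot (G\cdot R) = R^\dagger \cdot G^\dagger \cdot G \cdot R = R^\dagger \cdot R = \id$, with the opposite composite handled symmetrically. Concretely, in \cat{2Hilb} this composite is the change-of-basis matrix with entries $\langle b_j \mid a_i\rangle$ between the two orthonormal bases named by the measurements through Theorem~\ref{thm:2hilbnondegenmeasurement}, which is manifestly unitary.

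The main step is then to invoke the equivalence already supplied by Theorem~\ref{thm:complementary}: for a vertically unitary composite of this form, horizontal unitarity (Property~2) holds if and only if the Coecke--Duncan condition (Property~3) holds. Since Property~3 \emph{is} the categorical axiomatization of complementarity given in~\cite{cd07-gcnc}, the two observables underlying the measurements are complementary exactly when Property~3 holds, hence exactly when the composite is horizontally unitary. The scalar $\sqrt n$ relating the displayed composite to~\eqref{eq:compcomposite} is harmless, since horizontal unitarity is only claimed up to a scalar factor; combining the automatic vertical unitarity with this characterization of horizontal unitarity yields the asserted double-unitarity criterion.

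I expect essentially all of the content to sit inside Theorem~\ref{thm:complementary}, so that the proof proper is a short appeal to it. The one place where real work would be hidden — were one to prove the equivalence from scratch rather than cite it — is in identifying horizontal unitarity of the change-of-basis unitary with the mutual-unbiasedness condition $|\langle a_i \mid b_j\rangle|^2 = \tfrac{1}{\dim H}$. By Lemma~\ref{lem:horizontalfromvertical} this amounts to checking that a bent (reshuffled) form of the change-of-basis matrix is again unitary, which for an already unitary matrix is precisely the statement that all its entries share a common modulus, i.e.\ that it is a complex Hadamard matrix. That is the concrete incarnation of complementarity, and it is exactly the equivalence $2\Leftrightarrow 3$ that Theorem~\ref{thm:complementary} has already packaged, so I would simply invoke it rather than re-derive it.
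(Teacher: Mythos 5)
Your proposal is correct and takes essentially the same route as the paper, whose entire proof of Theorem~\ref{thm:complementarydoublyunitary} is the single line ``Straightforward from the proof of Theorem~\ref{thm:complementary}'': identifying the displayed composite, up to the factor $\sqrt{n}$, with the 2\-cell~\eqref{eq:compcomposite} of Property~2 and then invoking the equivalence $2\Leftrightarrow 3$ with the Coecke--Duncan axiomatization is exactly the intended argument. The details you add --- the automatic vertical unitarity of the preparation-measurement composite, the harmlessness of the scalar, and the complex-Hadamard reading of horizontal unitarity via Lemma~\ref{lem:horizontalfromvertical} --- are accurate elaborations of what the paper leaves implicit.
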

\begin{proof}
Straightforward from the proof of Theorem~\ref{thm:complementary}.
\end{proof}

\subsubsection*{Implementation}

We verify our physical complementarity condition~\eqref{eq:physicallycomplementary} by demonstrating that is it satisfied on a qubit, when the red basis is $\{\ket 0, \ket 1\}$ and the green basis is $\big\{\frac{1}{\sqrt{2}}(\ket{0} + \ket {1}), \frac{1}{\sqrt{2}} (\ket 0 - \ket 1) \big \}$. This gives the following values for the measurement vertices:
\begin{calign}
\begin{aligned}
\begin{tikzpicture}[thick]
    \node (V) [Vertex, vertex colour=red] at (1.5,1) {};
    \draw [fill=\fillA, draw=none]
        (2,1.6)
        to [out=down, in=\neangle] (V.center)
        to [out=\nwangle, in=down] (1,1.6);
    \draw
        (1.5,0.4)
        to (1.5,1.0);
    \draw (1,1.6)
        to [out=down, in=\nwangle] (V.center);
    \draw (V.center)
        to [out=\neangle, in=down] (2,1.6);
\end{tikzpicture}
\end{aligned}
\hspace{3pt}=\hspace{7pt}
\matrix{
\matrix{1 & 0
\\
0 & 1
}
}
&
\begin{aligned}
\begin{tikzpicture}[thick]
    \node (V) [Vertex, vertex colour=green] at (1.5,1) {};
    \draw [fill=\fillA, draw=none]
        (2,1.6)
        to [out=down, in=\neangle] (V.center)
        to [out=\nwangle, in=down] (1,1.6);
    \draw
        (1.5,0.4)
        to (1.5,1.0);
    \draw (1,1.6)
        to [out=down, in=\nwangle] (V.center);
    \draw (V.center)
        to [out=\neangle, in=down] (2,1.6);
\end{tikzpicture}
\end{aligned}
\hspace{3pt}=\hspace{7pt}
\matrix{
\frac 1 {\sqrt{2}}
\matrix{1 & 1
\\
1 & -1
}
}
\end{calign}
The controlled phase $\phi$ takes the following value:
\begin{equation}
\begin{aligned}
\begin{tikzpicture}[scale=\compscale, thick]
\path [fill=\fillcomp] (-0.25,-0.5) to (-0.25,1.5)
    to (1,1.5) to (1,-0.5);
\path [fill=\fillcomp] (2,-0.5) to (2,1.5)
    to (3.25,1.5) to (3.25,-0.5);
\draw (1,-0.5) to (1,1.5);
\draw (2,-0.5) to (2,1.5);
\node [minimum width=30pt, draw, fill=white] at (1.5,0.5) {$\phi$};
\end{tikzpicture}
\end{aligned}
\hspace{10pt}
=
\hspace{7pt}
\matrix{\matrix{1} & \matrix{1} \\ \matrix{1} & \matrix{-1} }
\end{equation}
It is a 2-by-2 matrix, all the entries of which are linear maps of type $\C \to \C$. It is unitary since each constituent linear map is unitary. The Mathematica notebook~\cite{v12-highernotebook} contains a verification that these components give a solution to equation~\eqref{eq:physicallycomplementary}.

\subsection{Quantum erasure}

\subsubsection*{Introduction}

Our study of complementarity gives a good framework for discussing \textit{quantum erasure}. This is the phenomenon whereby, after performing a quantum measurement as a part of a quantum procedure, erasing the resulting of the information can change the overall effect of the quantum procedure, even if the erasure is performed after the procedure has been completed. This is interesting to study as it fits badly with our classical intuition, which tells us that one should not be able to affect the result of an experiment by manipulating only systems disjoint from the measurement apparatus, especially when this manipulation is done only after the experiment is finished.

\subsubsection*{Analysis}

We analyze quantum erasure using the following protocol:
\begin{equation}
\label{eq:erasureprotocol}
\begin{aligned}
\begin{tikzpicture}[scale=\compscale, thick]
\node (a) [Vertex, vertex colour=red] at (0.5,0.25  ) {};
\node (b) [Vertex, vertex colour=red] at (1.5, 2.5) {};
\node (c) [Vertex, vertex colour=green] at (1.5, 3.00) {};
\node (d) [Vertex, vertex colour=green] at (0.5, -0.25) {};
\draw (0.5,-0.25 -| a.center) to (a.center);
\draw [fill=\fillcomp] (-1,4)
    to (-1,2)
    to [out=down, in=\nwangle, out looseness=1.5] (a.center)
    to [out=\neangle, in=down, in looseness=1.5] (2,2)
    to [out=up, in=\seangle] (b.center)
    to [out=\swangle, in=up] +(-0.5,-0.5)
    to [out=down, in=down, looseness=2] +(-1,0)
    to (0.0,4);
\draw (b.center)
    to [out=up, in=down] (c.center);
\draw [fill=\fillcomp] (1,4) to (1,3.5)
    to [out=down, in=\nwangle] (c.center)
    to [out=\neangle, in=down] +(0.5,0.5)
    to (2,4);
\draw [fill=\fillcomp] (0,-1.25)
    to (0,-0.75)
    to [out=up, in=\swangle] (d.center)
    to [out=\seangle, in=up] (1,-0.75)
    to (1,-1.25);
\end{tikzpicture}
\end{aligned}
\end{equation}
The procedure begins with a source of classical information. We then prepare a quantum system whose state witnesses this data, using the green measurement basis. This is then measured in another basis, drawn in red, which we take to be complementary to the green basis. The measurement result is copied, and one of these copies is used to prepare a new quantum system, again using the red basis. Finally, this new quantum system is re-measured in the green basis.

What is the result of this procedure? By complementarity, the result of the final measurement should be uncorrelated classically with the original classical information that existed at the beginning of the protocol. This can be shown by applying the physical complementarity condition~\eqref{eq:physicallycomplementary}, allowing us to rewrite our original expression~\eqref{eq:erasureprotocol} as follows:
\begin{equation}
\frac{1}{\sqrt{n}}\,
\begin{aligned}
\begin{tikzpicture}[scale=\compscale, thick]
\node (a) [Vertex, vertex colour=red] at (-0.5,2) {};
\draw (-0.5,1.5) to (a.center);
\node (d) [Vertex, vertex colour=green] at (-0.5, 1.5) {};
\draw [fill=\fillcomp] (-1,5.75)
    to (-1,2.5)
    to [out=down, in=\nwangle] (a.center)
    to [out=\neangle, in=down] (0,2.5)
    to +(0,3.25);
\draw [fill=\fillcomp] (1,5.75)
    to (1,3)
    to [out=down, in=left] (1.5,2.5)
    to [out=right, in=down] +(0.5,0.5)
    to +(0,2.75);
\node [minimum width=30pt, draw, fill=white] at (0.5,4.25) {$\phi$};
\draw [fill=\fillcomp] (-1,0.5)
    to (-1,1)
    to [out=up, in=\swangle] (d.center)
    to [out=\seangle, in=up] (0,1)
    to (0,0.5);
\end{tikzpicture}
\end{aligned}
\end{equation}
The initial classical data is uncorrelated with the classical data in the top-right region since the diagrams are disconnected, apart from the application of a global phase.

However, we now modify our protocol by \textit{erasing} the result of the red measurement after everything else has been completed. We do this by adding an erasure vertex~\eqref{eq:forget} at the top-left of expression~\eqref{eq:erasureprotocol}. This gives the following result:
\begin{equation}
\label{eq:seconderasureprotocol}
\begin{aligned}
\begin{tikzpicture}[scale=\compscale, thick]
\node (a) [Vertex, vertex colour=red] at (0.5,0.25  ) {};
\node (b) [Vertex, vertex colour=red] at (1.5, 2.5) {};
\node (c) [Vertex, vertex colour=green] at (1.5, 3.00) {};
\node (d) [Vertex, vertex colour=green] at (0.5, -0.25) {};
\draw (0.5,-0.25 -| a.center) to (a.center);
\draw [fill=\fillcomp] (0,4) to [out=up, in=up, looseness=2] (-1,4)
    to (-1,2)
    to [out=down, in=\nwangle, out looseness=1.5] (a.center)
    to [out=\neangle, in=down, in looseness=1.5] (2,2)
    to [out=up, in=\seangle] (b.center)
    to [out=\swangle, in=up] +(-0.5,-0.5)
    to [out=down, in=down, looseness=2] +(-1,0)
    to (0.0,4);
\draw (b.center)
    to [out=up, in=down] (c.center);
\draw [fill=\fillcomp] (1,5) to (1,3.5)
    to [out=down, in=\nwangle] (c.center)
    to [out=\neangle, in=down] +(0.5,0.5)
    to (2,5);
\draw [fill=\fillcomp] (0,-1.25)
    to (0,-0.75)
    to [out=up, in=\swangle] (d.center)
    to [out=\seangle, in=up] (1,-0.75)
    to (1,-1.25);
\end{tikzpicture}
\end{aligned}
\end{equation}
By the topological behaviour of classical information in our formalism, and the inverseness of measurement and preparation, we can simplify this as follows:
\begin{equation}
\label{eq:erasureproof}
\begin{aligned}
\begin{tikzpicture}[scale=\compscale, thick]
\node (a) [Vertex, vertex colour=red] at (0.5,0.25  ) {};
\node (b) [Vertex, vertex colour=red] at (1.5, 2.5) {};
\node (c) [Vertex, vertex colour=green] at (1.5, 3.00) {};
\node (d) [Vertex, vertex colour=green] at (0.5, -0.25) {};
\draw (0.5,-0.25 -| a.center) to (a.center);
\draw [fill=\fillcomp] (0,4) to [out=up, in=up, looseness=2] (-1,4)
    to (-1,2)
    to [out=down, in=\nwangle, out looseness=1.5] (a.center)
    to [out=\neangle, in=down, in looseness=1.5] (2,2)
    to [out=up, in=\seangle] (b.center)
    to [out=\swangle, in=up] +(-0.5,-0.5)
    to [out=down, in=down, looseness=2] +(-1,0)
    to (0.0,4);
\draw (b.center)
    to [out=up, in=down] (c.center);
\draw [fill=\fillcomp] (1,5) to (1,3.5)
    to [out=down, in=\nwangle] (c.center)
    to [out=\neangle, in=down] +(0.5,0.5)
    to (2,5);
\draw [fill=\fillcomp] (0,-1.25)
    to (0,-0.75)
    to [out=up, in=\swangle] (d.center)
    to [out=\seangle, in=up] (1,-0.75)
    to (1,-1.25);
\end{tikzpicture}
\end{aligned}
\quad=\quad
\begin{aligned}
\begin{tikzpicture}[scale=\compscale, thick]
\node (a) [Vertex, vertex colour=red] at (1.5,0.25  ) {};
\node (b) [Vertex, vertex colour=red] at (1.5, 2.5) {};
\node (c) [Vertex, vertex colour=green] at (1.5, 3.00) {};
\node (d) [Vertex, vertex colour=green] at (1.5, -0.25) {};
\draw (0.5,-0.25 -| a.center) to (a.center);
\draw [fill=\fillcomp] (a.center)
    to [out=\neangle, in=down, in looseness=1.5] (2,0.75)
    to (2,2)
    to [out=up, in=\seangle] (b.center)
    to [out=\swangle, in=up] +(-0.5,-0.5)
    to (1.0,0.75)
    to [out=down, in=\nwangle] (a.center);
\draw (b.center)
    to [out=up, in=down] (c.center);
\draw [fill=\fillcomp] (1,5) to (1,3.5)
    to [out=down, in=\nwangle] (c.center)
    to [out=\neangle, in=down] +(0.5,0.5)
    to (2,5);
\draw [fill=\fillcomp] (1,-1.25)
    to (1,-0.75)
    to [out=up, in=\swangle] (d.center)
    to [out=\seangle, in=up] (2,-0.75)
    to (2,-1.25);
\end{tikzpicture}
\end{aligned}
\quad=\quad
\begin{aligned}
\begin{tikzpicture}[scale=\compscale, thick]
\node (c) [Vertex, vertex colour=green] at (1.5, 3.00) {};
\node (d) [Vertex, vertex colour=green] at (1.5, -0.25) {};
\draw (0.5,-0.25 -| a.center) to (a.center);
\draw (d.center) to (c.center);
\draw [fill=\fillcomp] (1,5) to (1,3.5)
    to [out=down, in=\nwangle] (c.center)
    to [out=\neangle, in=down] +(0.5,0.5)
    to (2,5);
\draw [fill=\fillcomp] (1,-1.25)
    to (1,-0.75)
    to [out=up, in=\swangle] (d.center)
    to [out=\seangle, in=up] (2,-0.75)
    to (2,-1.25);
\end{tikzpicture}
\end{aligned}
\quad=\quad
\begin{aligned}
\begin{tikzpicture}[scale=\compscale, thick]
\draw [fill=\fillcomp, draw=none] (0,0) rectangle (1,6.25);
\draw (0,0) to (0,6.25);
\draw (1,0) to (1,6.25);
\end{tikzpicture}
\end{aligned}
\end{equation}
The result of the experiment has changed completely. Having performed the erasure, the final measurement outcome is identical to the original classical information, instead of uncorrelated to it as before. This demonstrates the counter-intuitive effects of quantum erasure in a fully graphical way.

\subsubsection*{Implementation}

We verify this computationally by demonstrating that, with the choice of the red vertex as the computational basis and the green vertex as the plus/minus basis, the first and last expressions of equation~\eqref{eq:erasureproof}
are equal. The calculation can be found in the \textit{Mathematica} notebook~\cite{v12-highernotebook}.

\subsection{More general protocols}
\label{sec:generalized}

\subsubsection*{Introduction}

An important feature of our graphical approach is that it allows us to write down general quantum informatic procedures in a precise way. Quite easily, specifications can be given which do not correspond to anything described in the literature, and it is in general a difficult problem to decide whether these specifications can actually be implemented.

The problem of finding new protocols becomes a question of pure mathematics in our new setting. Since every diagram corresponds to a physically-implementable quantum procedure, every valid graphical identity in \cat{2Hilb} corresponds to a specification of some quantum task, in the most general sense  of two different physical procedures which have the same effect when implemented on quantum systems.

\subsubsection*{Interlaced  teleportation}

Here we describe a generalization of quantum teleportation. It can be thought of as the simultaneous execution of two separate teleportation procedures, but acting on the same quantum system, with the measurement and unitary correction steps of the two protocols interlaced in a particular way.  This is its graphical specification:
\begin{equation}
\begin{aligned}
\begin{tikzpicture}
    \node [Vertex, vertex colour=black] (B) at (1.4,0.2) {};
    \node [Vertex, vertex colour=red] (D) at (0.5,1) {};
\begin{pgfonlayer}{foreground}
    \node (g) [minimum width=0.8cm, minimum height=0.45cm, fill=white, draw=black, thick, inner sep=0pt]
        at (0.5,1) {$g$};
\end{pgfonlayer}
\begin{scope}[xshift=0.2cm]
\begin{scope}[yshift=-1cm]
    \node [Vertex, vertex colour=white] (A) at (-1.4,0.2) {};
    \node [Vertex] (C) at (-0.5,1) {};
\begin{pgfonlayer}{foreground}
    \node (f) [minimum width=0.8cm, minimum height=0.45cm, fill=white, draw=black, thick, inner sep=0pt]
        at (-0.5,1) {$f$};
\end{pgfonlayer}
\node (E2) at (f.-140 -| -2,1) {};
\end{scope}
\node (E) at (-2,1.8) {};
\node (F) at (f.140 |- E.center) {};
\end{scope}
\node (G) at (g.40 |- E.center) {};
\node (H) at (2,2 |- E.center) {};
\node (H2) at (g.-40 -| 2,1) {};
\node (I) at (g.140 |- E.center) {};
\node (J) at (-0.5,-2.3) {};
\draw [thick, fill=\fillA, draw=black] (E.center)
    to (E2.center)
    to [out=down, in=\nwangle] (A.center)
    to [out=\neangle, in=down] (f.-140) to (f.140)
    to [out=up, in=down] (F.center);
\draw [thick, fill=\fillA, draw=black] (H.center)
    to (H2.center)
    to [out=down, in=\neangle] (B.center)
    to [out=\nwangle, in=down] (g.-40) to (g.40) 
    to [out=up, in=down] (G.center);
\begin{scope}[thick]
    \draw (J.center)
        to [out=up, in=down] (A.s1);
    \draw (A.s2)
        to [out=down, in=100] (0.22,-1.6)
        to [out=-80, in=left] (0.7,-2)
        to [out=right, in=down] (B.s2);
    \draw (B.s1) to [out=down, in=up] (1.1,-0.5)
        to [out=down, in=right] (0.75,-1.75)
        to [out=left, in=down] ([yshift=-0.5cm, xshift=0.4cm] f.-40)
        to [out=up, in=down] (f.-40);
    \draw (f.40)
        to [out=up, in=down] (g.-140);
    \draw (g.140)
        to [out=up, in=down] (I.center);
\end{scope}
\end{tikzpicture}
\end{aligned}
\quad=\quad
\begin{aligned}
\begin{tikzpicture}
\begin{scope}[yshift=-0.5cm]
\node (C) at (-0.8,1) {};
\node (E2) at (-2,1) {};
\node (D) at (0.8,1) {};
\node (H2) at (2,1) {};
\end{scope}
\node (E) at (-2,1.8) {};
\node (F) at (-0.8,2 |- E.center) {};
\node (G) at (0.8,2 |- E.center) {};
\node (H) at (2,2 |- E.center) {};
\node (I) at (0,2 |- E.center) {};
\node (J) at (0,-2.3) {};
\draw [thick, fill=\fillA, draw=black] (E.center)
    to (E2.center)
    to [out=down, in=down, looseness=2] (C.center)
    to (F.center);
\draw [thick, fill=\fillA, draw=black] (H.center)
    to (H2.center)
    to [out=down, in=down, looseness=2] (D.center)
    to (G.center);
\begin{scope}[thick]
    \draw (J.center) to (I.center);
\end{scope}
\end{tikzpicture}
\end{aligned}
\end{equation}
Step by step, the left-hand side of this equation describes the following procedures.
\begin{enumerate}
\item
We begin with a single quantum system $S$, which we refer to as system 1.
\item
Four new instances of $S$ are then prepared, referred to as copies 2, 3, 4 and 5, such the pairs $(2,5)$ and $(3,4)$ are in a Bell state.
\item
Copies 1 and 2 of the system are then measured in some multi-partite basis~A.
\item
Simultaneously, copies 4 and 5 are also measured in some multi-partite basis~B.
\item
Copy 3 then has a correction performed on it, depending on the result of the A-basis measurement.
\item
Finally, another correction is applied to the same qubit, this time depending on the result of the B-basis measurement.
\end{enumerate}
The protocol is successful if this procedure has the same result as that described on the right-hand side of the equation: the initial system $S$ being unaffected, and its state uncorrelated to the results of the two measurements that were performed.

This seems a perfectly reasonable quantum-informatic task. However, by a result of Fong~\cite{f12-it}, it cannot be achieved. Since our specifications remain unchanged by topological deformation of the classical information, this failure of implementability is a \emph{topological invariant} of our specification. We are left with a compelling open question: can implementability of graphical specifications be deduced by solely topological means?

\appendix
\section{Modelling information transfer to the\\environment}
\label{sec:environmental}

\subsection{Introduction}

In this Appendix, we describe an alternative presentation of the 2\-category \cat{2Hilb} from an explicitly physical perspective. Our model is built around quantum systems, their dynamics, their environment, and the interactions between these. When talking in general terms, we will use $S$ and $E$ to refer to our system and its environment respectively, and will also let these symbols stand for their Hilbert spaces of quantum states where appropriate.

The model does not describe arbitrary interactions between a system and its environment, but only those which cause information to be `transferred' in a particularly straightforward way. Using the terminology of the \emph{decoherence} programme, this is comparable to the situation when environmental interactions select out \emph{robust states} of a quantum systems, with classical properties (see~\cite{s05-decoherence} for a survey.) This is a scenario in which the notion of `classical data' extracted by the environment can most clearly be formalized, and so it is a good foundation on which to build our model.

We omit proofs of lemmas in this section, since they are all straightforward, and identical to proofs in the standard theory of modules, albeit couched in unusual physical terminology.

\subsection{Classical data types}
\label{sec:classicaldatatypes}

\noindent
A \emph{classical data type} is a Hilbert space $V$ equipped with \textit{copying} and \textit{deleting} maps
\begin{align}
    \delta &\colon V \to V \otimes V
    \\
    \epsilon &\colon V \to \mathbb{C}
\end{align}
satisfying associativity, unit, and commutativity laws:
\begin{gather}
\label{eq:assoc}
(\delta \otimes \id _V) \circ \delta = (\id _V \otimes \delta) \circ \delta
\\
\label{eq:unit}
(\epsilon \otimes \id _V) \circ \delta = \id _V = (\id _V \otimes \epsilon) \circ \delta
\\
\label{eq:comm}
\text{swap} _{V,V} \circ \delta = \delta
\end{gather}
Here we make use of the map
\[
\text{swap} _{V,V} \colon V \otimes V \to V \otimes V,
\]
which  we define by its action $\ket\phi \otimes \ket\psi \mapsto \ket\psi \otimes \ket\phi$
on product states. A common name for this structure  is a \emph{commutative coalgebra} (or \textit{commutative comonoid}).

In our model, we assume that the environment $E$ is given the structure of a  classical data type. Physically, this is motivated  by the idea that the environment stores certain kinds of information about our quantum system in a highly redundant manner, yielding these effective copying and deleting structures.

We will use a graphical notation to describe our algebraic operations~\cite{js91-gtc, s11-sgl}. In this notation vertical lines represent Hilbert spaces, and vertices represent linear maps. Horizontal juxtaposition represents the tensor product operation, and vertical juxtaposition represents composition. Using this notation our copying and deleting maps have the following form, which should be read from bottom to top:
\def\myscale{0.2}
\def\myYscale{0.4}
\begin{equation}
\begin{aligned}
\begin{tikzpicture}[xscale=\myscale,yscale=\myYscale]
\node (m) at (0,0) {};
\node (tl) at (-1,1) {};
\node (tr) at (1,1) {};
\node (b) at (0,-1) {};
\draw (b.center) to (m.center) to [out=left, in=down] (tl.center);
\draw (m.center) to [out=right, in=down] (tr.center);
\begin{scope}[xshift=10cm]
    \node (b2) at (0,-1) {};
    \node [draw, circle, fill, inner sep=2pt] (m2) at (0,0) {};
    \draw (b2.center) to (m2.center) {};
\end{scope}
\end{tikzpicture}
\end{aligned}
\end{equation}
Our axioms~(\ref{eq:assoc}--\ref{eq:comm}) are then depicted in the following way:
\begin{equation}
\label{eq:graphicalcomonoid}
\begin{aligned}
\begin{tikzpicture}[xscale=\myscale,yscale=\myYscale]
\node (m) at (0.25,-1) {};
\node (tl) at (-2,1) {};
\node (tr) at (1.5,1) {};
\node (b) at (0.25,-2) {};
\node (tm) at (0,1) {};
\node (l) at (-1,0) {};
\node (r) at (1.5,0) {};
\draw (b.center) to (m.center) to [out=left, in=down] (l.center) to [out=left, in=down] (tl.center);
\draw (m.center) to [out=right, in=down] (r.center) to (tr.center);
\draw (l.center) to [out=right, in=down] (tm.center);
\end{tikzpicture}
\end{aligned}
=
\begin{aligned}
\begin{tikzpicture}[xscale=-\myscale,yscale=\myYscale]
\node (m) at (0.25,-1) {};
\node (tl) at (-2,1) {};
\node (tr) at (1.5,1) {};
\node (b) at (0.25,-2) {};
\node (tm) at (0,1) {};
\node (l) at (-1,0) {};
\node (r) at (1.5,0) {};
\draw (b.center) to (m.center) to [out=left, in=down] (l.center) to [out=left, in=down] (tl.center);
\draw (m.center) to [out=right, in=down] (r.center) to (tr.center);
\draw (l.center) to [out=right, in=down] (tm.center);
\end{tikzpicture}
\end{aligned}
\hspace{15pt}
\begin{aligned}
\begin{tikzpicture}[xscale=\myscale,yscale=\myYscale]
\node (m) at (0,-1) {};
\node (tr) at (1,1) {};
\node (b) at (0,-2) {};
\node [draw, circle, fill, inner sep=2pt] (l) at (-1,0) {};
\node (r) at (1,0) {};
\draw (b.center) to (m.center) to [out=left, in=down] (l.center);
\draw (m.center) to [out=right, in=down] (r.center) to (tr.center);
\end{tikzpicture}
\end{aligned}
=
\begin{aligned}
\begin{tikzpicture}[xscale=\myscale,yscale=\myYscale]
\node (t) at (0,1) {};
\node (b) at (0,-2) {};
\draw (b.center) to (t.center);
\end{tikzpicture}
\end{aligned}
=
\begin{aligned}
\begin{tikzpicture}[xscale=-\myscale,yscale=\myYscale]
\node (m) at (0,-1) {};
\node (tr) at (1,1) {};
\node (b) at (0,-2) {};
\node [draw, circle, fill, inner sep=2pt] (l) at (-1,0) {};
\node (r) at (1,0) {};
\draw (b.center) to (m.center) to [out=left, in=down] (l.center);
\draw (m.center) to [out=right, in=down] (r.center) to (tr.center);
\end{tikzpicture}
\end{aligned}
\hspace{15pt}
\begin{aligned}
\begin{tikzpicture}[xscale=\myscale,yscale=\myYscale]
\node (m) at (0,0) {};
\node (tl) at (-1,1) {};
\node (tr) at (1,1) {};
\node (b) at (0,-1) {};
\node (ttl) at (-1,2) {};
\node (ttr) at (1,2) {};
\draw (b.center) to (m.center) to [out=left, in=down] (tl.center) to [out=up, in=down, looseness=0.5] (ttr.center);
\draw (m.center) to [out=right, in=down] (tr.center) to [out=up, in=down, looseness=0.5] (ttl.center);
\end{tikzpicture}
\end{aligned}
=
\begin{aligned}
\begin{tikzpicture}[xscale=\myscale,yscale=\myYscale]
\node (m) at (0,0) {};
\node (tl) at (-1,1) {};
\node (tr) at (1,1) {};
\node (b) at (0,-1) {};
\node (ttl) at (1,2) {};
\node (ttr) at (-1,2) {};
\draw (b.center) to (m.center) to [out=left, in=down] (tl.center) to [out=up, in=down, looseness=0.5] (ttr.center);
\draw (m.center) to [out=right, in=down] (tr.center) to [out=up, in=down, looseness=0.5] (ttl.center);
\end{tikzpicture}
\end{aligned}
\end{equation}
This graphical form of the axioms has the advantage of being more immediately comprehensible than the traditional algebraic presentation.

We will further require our classical data type to be a  \textit{special \dag\-Frobenius algebra}, also called a \emph{classical structure} by Coecke and Pavlovic~\cite{cp06-qmws}. The axioms for this involve the adjoints $\delta^\sdag: V \otimes V \to V$ and $\epsilon^\sdag : \C \to V$ of our copying and deleting maps, which are drawn in the following way:
\begin{equation}
\begin{aligned}
\begin{tikzpicture}[xscale=\myscale,yscale=-\myYscale]
\node (m) at (0,0) {};
\node (tl) at (-1,1) {};
\node (tr) at (1,1) {};
\node (b) at (0,-1) {};
\draw (b.center) to (m.center) to [out=left, in=down] (tl.center);
\draw (m.center) to [out=right, in=down] (tr.center);
\begin{scope}[xshift=10cm]
    \node (b2) at (0,-1) {};
    \node [draw, circle, fill, inner sep=2pt] (m2) at (0,0) {};
    \draw (b2.center) to (m2.center) {};
\end{scope}
\end{tikzpicture}
\end{aligned}
\end{equation}
The classical structure axioms then take the following form:
\begin{equation}
\label{eq:graphicalfrobenius}
\begin{aligned}
\begin{tikzpicture}[scale=0.6]
\node (a) at (0.1,0) {};
\node (b) at (1.0,0)  {};
\node (c) at (0.5,1) {};
\node (d) at (1,0.5) {};
\node (e) at (0.5,1.5) {};
\node (f) at (1.4,1.5) {};
\draw (a.center)
    to [out=up, in=left] (c.center)
    to [out=right, in=left] (d.center)
    to [out=right, in=down] (f.center);
\draw (c.center) to (e.center);
\draw (b.center) to (d.center);
\end{tikzpicture}
\end{aligned}
=
\begin{aligned}
\begin{tikzpicture}[xscale=0.6, yscale=-0.6]
\node (a) at (0.1,0) {};
\node (b) at (1.0,0)  {};
\node (c) at (0.5,1) {};
\node (d) at (1,0.5) {};
\node (e) at (0.5,1.5) {};
\node (f) at (1.4,1.5) {};
\draw (a.center)
    to [out=up, in=left] (c.center)
    to [out=right, in=left] (d.center)
    to [out=right, in=down] (f.center);
\draw (c.center) to (e.center);
\draw (b.center) to (d.center);
\end{tikzpicture}
\end{aligned}
\hspace{20pt}
\begin{aligned}
\begin{tikzpicture}[scale=0.6]
\node (a) at (0,0) {};
\node (b) at (0,0.4)  {};
\node (c) at (0,1.1) {};
\node (d) at (0,1.5) {};
\draw (a.center)
    to (b.center)
    to [out=left, in=left, looseness=1.4] (c.center)
    to (d.center);
\draw (b.center)
    to [out=right, in=right, looseness=1.4] (c.center);
\end{tikzpicture}
\end{aligned}
=
\begin{aligned}
\begin{tikzpicture}[scale=0.6]
\node (a) at (0,0) {};
\node (d) at (0,1.5) {};
\draw (a.center)
    to (d.center);
\end{tikzpicture}
\end{aligned}
\end{equation}
\noindent
These axioms ensure that $V$ is finite-dimensional, that $\delta$ acts by  copying the elements of some orthonormal basis of $V$, and that $\epsilon$ acts by deleting them~\cite{cpv08-dfb}:
\begin{align}
\delta &\colon \ket{i} \mapsto \ket{i} \otimes \ket{i}
\\
\epsilon &\colon \ket{i} \mapsto 1
\end{align}
This is a significant restriction, but one which is appropriate for describing quantum informatic phenomena. It would be interesting to consider more general classical data types,  especially for modelling \emph{continuous} data; this would require dropping the classical structure axioms~\eqref{eq:graphicalfrobenius} above.

\subsection{Environmental interactions}
\label{sec:environmentalinteractions}
\def\diagscale{1}

\noindent
We now suppose that our system $S$ and environment $E$ interact, in such a way that information about the state of the system is transferred to the environment. To formalize this notion, we consider  linear maps of the form
\begin{equation}
\tau \colon S \to E \otimes S,
\end{equation}
having the following graphical representation:
\tikzset{env/.style={ultra thick}}
\[
\begin{aligned}
\begin{tikzpicture}[yscale=\diagscale]
\node [comod] (f) at (0,0) {$\tau$};
\draw [env] (f.150) to +(0,15pt) node (A) [above] {$E$};
\draw (f.330) to +(0,-15pt) node (B) [below] {$S$};
\draw (f.30) to +(0,15pt) node (C) [above] {$S$};
\end{tikzpicture}
\end{aligned}
\]
We use a thicker pen to draw the curve representing the environmental system, so it be easily distinguished.

We now equip our environment $E$ with the structure of a classical data type $(E,\delta,\epsilon)$. We say that  $\tau \colon S \to E \otimes S$ is an \emph{interaction map} if the following axioms hold:
\extralabel{eq:comod1}
\extralabel{eq:comod2}
\[
\tag{\ref{eq:comod1},\ref{eq:comod2}}
\begin{array}{c}
\begin{aligned}
\begin{tikzpicture}[yscale=\diagscale]
\node [comod] (f) at (0,0) {$\tau$};
\draw (f.30) to +(0,15pt) node (C) [above] {};
\node (f2) [comod, anchor=330] at (C.south) {$\tau$};
\draw (f2.30) to +(0,15pt) node (C2) [above=0pt] {$S$};
\draw [env] (f2.150) to +(0pt,15pt) node (A2) [above] {$E$};
\draw [env] (f.150)
    to [out=up, in=down] +(-20pt,15pt) node (A) [above] {}
    to ([xshift=-20pt] A2.south) node [above] {$E$};
\draw (f.330) to +(0,-15pt) node (B) [below] {$S$};
\end{tikzpicture}
\end{aligned}
=
\hspace{-10pt}
\begin{aligned}
\begin{tikzpicture}[yscale=\diagscale]
\node [comod] (f) at (0,0) {$\tau$};
\draw (f.30)
    to +(0,42.9pt)
        node (C) [above] {$S$};
\draw [env] (f.150)
    to [out=up, in=down] +(-10pt,18pt)
        node (A) [above] {}
    to [out=left, in=down] ([xshift=-20pt] A2.south) node [above] {$E$};
\draw (f.330)
    to +(0,-15pt)
        node (B) [below] {$S$};
\draw [env] (A.south)
    to [out=right, in=down] (A2.south)
        node [above] {$E$};
\end{tikzpicture}
\end{aligned}
\end{array}
\hspace{20pt}
\begin{array}{c}
\begin{aligned}
\begin{tikzpicture}[yscale=\diagscale]
\node [comod] (f) at (0,0) {$\tau$};
\draw [env] (f.150) to +(0,15pt) node (A) [draw, circle, fill, inner sep=2pt] {};
\draw (f.330) to +(0,-20pt) node (B) [below] {$S$};
\draw (f.30) to +(0,35pt) node (C) [above] {$S$};
\end{tikzpicture}
\end{aligned}
=
\begin{aligned}
\begin{tikzpicture}[yscale=\diagscale]
\draw (0,0) node [below] {$S$} to (0,68pt) node (C) [above] {$S$};
\end{tikzpicture}
\end{aligned}
\end{array}
\]
If $\tau$ has these properties, then it can be thought of as transferring data of type $(E,\delta,\epsilon)$ to the environment. This interpretation arises from the first axiom above: in words, extracting information twice from $S$ is the same as extracting it once and copying the result. We also have a non-disturbance property, thanks to the second axiom: the action of $\tau$ can be undone by a linear map which acts solely on the environmental degrees of freedom. The axioms we have given here exactly match the conventional mathematical notion of a \emph{comodule} for a comonoid.

We will be interested in the more general situation where data is transferred to more than one type of environmental system. Suppose that we have two such interaction maps, $\tau\colon S \to E \otimes S$ and $\tau'\colon S \to E' \otimes S$, where both $E$ and $E'$ are equipped separately with the structure of a classical data type. Physically, we imagine that these environmental interactions are occurring in some `uncontrollable' fashion, transferring data about the system $S$ to both environments arbitrarily often. The only case in which the resulting information flow is well-defined will be when $\tau$ and $\tau'$ commute, in the following sense:
\begin{equation}
\label{eq:commutinginteraction}
\begin{aligned}
\begin{tikzpicture}[yscale=\diagscale]
\node [comod,minimum height=12pt] (f) at (0,0) {$\tau$};
\draw (f.35) to +(0,15pt) node (C) [above] {};
\node (f2) [comod, anchor=325, minimum height=12pt] at (C.south) {\raisebox{-7.4pt}{\smash{$\tau'$}}};
\draw (f2.35) to +(0,25pt) node (C2) [above] {$S$};
\node (P) at ([yshift=5pt] f2.145) {};
\draw [env] (f2.145)
    to (P.center)
    to [out=up, in=down] ([xshift=-20pt,yshift=20pt] P.center)
        node (A2) [above] {$E'$};
\draw [env] (f.145)
    to [out=up, in=down] +(-20pt,15pt)
        node (A) [above] {}
    to ([xshift=-20pt] P.center)
    to [out=up, in=down] ([yshift=20pt] P.center)
        node [above] {$E$};
\draw (f.325) to +(0,-15pt) node (B) [below] {$S$};
\end{tikzpicture}
\end{aligned}
\hspace{7pt}
=
\begin{aligned}
\begin{tikzpicture}[yscale=\diagscale]
\node [comod,minimum height=12pt] (f) at (0,0) {\raisebox{-7.4pt}{\smash{$\tau'$}}};
\draw (f.35) to +(0,15pt) node (C) [above] {};
\node (f2) [comod, anchor=325, minimum height=12pt] at (C.south) {$\tau$};
\draw (f2.35) to +(0,25pt) node (C2) [above] {$S$};
\node (P) at ([yshift=5pt] f2.145) {};
\draw [env] (f2.145)
    to [out=up, in=down] ([yshift=20pt] P.center)
        node (A2) [above] {$E$};
\draw [env] (f.145)
    to [out=up, in=down] +(-20pt,15pt)
        node (A) [above] {}
    to ([xshift=-20pt,yshift=20pt] P.center)
        node [above] {$E'$};
\draw (f.325) to +(0,-15pt) node (B) [below] {$S$};
\end{tikzpicture}
\end{aligned}
\end{equation}
This commutativity condition can be readily extended to the situation of more than two interaction maps, in which case all pairs are required to commute.

An interesting example of an interaction map arises in the case that the quantum system is the \textit{same} as the environment. In this case, we can choose the copying map $\delta \colon E \to E \otimes E$ to itself be our interaction map.
\begin{lemma}
\label{lem:copyisinteraction}
For a classical data type $(E,\delta,\epsilon)$, the map $\delta:E \to E \otimes E$ satisfies the axiom of an interaction map.
\end{lemma}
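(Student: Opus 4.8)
The plan is to observe that the lemma is an instance of the standard categorical fact that any comonoid is canonically a comodule over itself via its comultiplication: once we substitute $S = E$ and $\tau = \delta$, the two defining equations of an interaction map, \eqref{eq:comod1} and \eqref{eq:comod2}, reduce verbatim to the coassociativity and counit axioms of the classical data type $(E,\delta,\epsilon)$.

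Concretely, I would first transcribe the interaction-map axioms into algebra. Reading the string diagrams, axiom \eqref{eq:comod1} (``extract twice $=$ extract once and copy'') is $(\id_E \otimes \tau) \circ \tau = (\delta \otimes \id_S) \circ \tau$, and the non-disturbance axiom \eqref{eq:comod2} is $(\epsilon \otimes \id_S) \circ \tau = \id_S$. Setting $S = E$ and $\tau = \delta$, the first equation becomes $(\id_E \otimes \delta) \circ \delta = (\delta \otimes \id_E) \circ \delta$, which is exactly the associativity law \eqref{eq:assoc}, and the second becomes $(\epsilon \otimes \id_E) \circ \delta = \id_E$, which is exactly the unit law \eqref{eq:unit}. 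Both hold by hypothesis, so $\delta$ satisfies the interaction-map axioms. Alternatively, the whole verification can be read off graphically: the two panels of \eqref{eq:comod1}--\eqref{eq:comod2}, with $\tau$ replaced by the copying vertex, are literally the two panels of the comonoid axioms \eqref{eq:graphicalcomonoid}.

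I would also remark on what is \emph{not} needed: neither the commutativity axiom \eqref{eq:comm} nor the special $\dag$-Frobenius axioms \eqref{eq:graphicalfrobenius} enter, since an interaction map only requires the bare comonoid structure on $E$. There is no genuine obstacle here; the only point requiring care is matching the orientation and the ordering of tensor factors in \eqref{eq:comod1}--\eqref{eq:comod2} against those in \eqref{eq:assoc}--\eqref{eq:unit}, so that the identifications are literal equalities rather than equalities holding merely up to the symmetry $\mathrm{swap}_{E,E}$.
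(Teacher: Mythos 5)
Your proof is correct and is exactly the argument the paper has in mind: the paper omits proofs of the lemmas in this appendix precisely because they are ``identical to proofs in the standard theory of modules,'' and your observation that a comonoid is canonically a comodule over itself---so that with $S=E$ and $\tau=\delta$ the axioms \eqref{eq:comod1} and \eqref{eq:comod2} reduce verbatim to coassociativity \eqref{eq:assoc} and (one half of) the counit law \eqref{eq:unit}---is that standard fact. Your remarks that neither commutativity \eqref{eq:comm} nor the Frobenius axioms \eqref{eq:graphicalfrobenius} are needed, and that the tensor-factor ordering matches on the nose, are both accurate.
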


\noindent
It is reassuring that this is possible, as it emphasizes that despite the conceptual split between system and environment that lies at the heart of this framework, the environment can itself be treated as a system equipped with a canonical interaction.

\subsection{Simultaneous interactions}
\label{sec:simultaneousmeasurements}

\noindent
We now consider the possibility that two quantum systems $S$ and $S'$ interact simultaneously with the \textit{same} environmental system $E$. In this situation, correlations are induced between states of the two system, since if  one measures the value of the environmental classical data type, one obtains a restriction on the joint state of $S \otimes S'$ to those states compatible with this value.

Suppose that our two environmental interaction maps for systems $S$ and $S'$ are
\begin{align*}
\tau &\colon S\phantom{'} \hspace{-2pt} \to E \otimes S
\\
\tau' &\colon S' \hspace{-2pt} \to E \otimes S'
\end{align*}
We want to calculate those joint states of $S \otimes S'$ which are \textit{consistent}, in the sense that they transfer the same classical data to the environment under the application of $\tau$ or $\tau'$. We call this the tensor product of $S$ and $S'$ \emph{with respect to the environment E}, written $S \otimes _E S'$. Suppose that $\psi \in S \otimes S'$ is such a joint state. Then we can express this consistency property by the following equation:
\begin{equation}
\label{eq:consistencyproperty}
\begin{aligned}
\begin{tikzpicture}[yscale=\diagscale]
    \draw (0,0) -- (27pt,0pt) -- (13.5pt,-16pt) -- cycle;
    \node [comod,minimum height=12pt, anchor=-35] (f)
        at (4pt,15pt) {$\tau$};
    \draw (f.-35) to (4pt,0pt);
    \draw (f.35) to +(0pt,20pt) node [above] {$S$};
    \draw [env] (f.145) to +(0pt,20pt) node [above] {$E$};
    \draw (23pt,0pt) to (23pt,47.3pt) node [above] {$S'$};
    \node at (13.5pt,-7pt) {$\psi$};
\end{tikzpicture}
\end{aligned}
=
\begin{aligned}
\begin{tikzpicture}[yscale=\diagscale]
    \draw (0,0) -- (27pt,0pt) -- (13.5pt,-16pt) -- cycle;
    \node [comod,minimum height=12pt, anchor=-35] (f)
        at (23pt,15pt) {\raisebox{-7.4pt}{\smash{$\tau'$}}};
    \draw (f.-35) to (23pt,0pt);
    \draw (f.35) to +(0pt,20pt) node [above] {$S'$};
    \draw [env] (f.145)
        to [out=up, in=down]
            ([xshift=-20pt, yshift=20pt] f.145)
            node [above] {$E$};
    \node at (13.5pt,-7pt) {$\psi$};
    \draw (4pt,0pt)
        to [out=up, in=down] ([xshift=-20pt] f.-145)
        to [out=up, in=down] ([xshift=-20pt] f.145)
        to [out=up, in=down] ([yshift=20pt] f.145)
            node [above] {$S$};
\end{tikzpicture}
\end{aligned}
\end{equation}
We wish to find the subspace of $S \otimes S'$ spanned by such states. Writing this subspace as $S \otimes _E S'$, we can construct it as the following equalizer:
\vspace{-8pt}
\begin{equation}
\label{eq:equalizer}
\begin{aligned}
\begin{tikzpicture}[xscale=1]
\node (T) at (0,0) {$S \otimes_E S'$};
\node (A) at ([xshift=0.7cm] T.east) [anchor=west] {$S \otimes S'$};
\node (B) at ([xshift=2.4cm] A.east) [anchor=west] {$E \otimes S \otimes S'$};
\draw [right hook->] (T) to node [above] {$e$} (A);
\draw [->] (A.10)
    to [out=25, in=155, looseness=0.5]
    node [above] {$\tau \otimes \id _{S'}$}
    (B.170);
\draw [->] (A.-10)
    to [out=-25, in=-155, looseness=0.5]
    node [below] {$(\text{swap}_{E,S} \otimes \id _{S'}) \circ (\id _E \otimes \tau')$}
    (B.-170);
\end{tikzpicture}
\end{aligned}
\end{equation}
\vspace{-8pt}

\noindent
We choose the embedding $e$ to be an isometry. The state space $S \otimes _E S'$ can itself be given a canonical environmental interaction map, as the following composite:
\begin{equation}
\label{eq:compositeinteractionmap}
S \otimes _E S' \xto{\textstyle e}
S \otimes S'
\xto{\tau \otimes \id _{S'}}
E \otimes S \otimes S'
\xto{\id_E \otimes e ^\sdag}
E \otimes (S \otimes _E S')
\end{equation}
\begin{lemma}
The composite~\eqref{eq:compositeinteractionmap} satisfies the axioms of an interaction map.
\end{lemma}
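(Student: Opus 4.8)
The plan is to verify directly that the composite
$\rho := (\id_E \otimes e^\sdag) \circ (\tau \otimes \id_{S'}) \circ e$
of~\eqref{eq:compositeinteractionmap} satisfies the two interaction-map axioms~(\ref{eq:comod1},\ref{eq:comod2}): the comodule (coassociativity) law and the counit law. Throughout I would use only three facts already in hand: that $\tau$ is itself an interaction map, that $e$ is an isometry so that $e^\sdag \circ e = \id_{S \otimes_E S'}$, and the defining equalizer property~\eqref{eq:equalizer}, equivalently the consistency equation~\eqref{eq:consistencyproperty}, enjoyed by the image of $e$.

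The counit law is immediate. Because $\epsilon$ and $e^\sdag$ act on disjoint tensor factors, one has $(\epsilon \otimes \id) \circ (\id_E \otimes e^\sdag) = e^\sdag \circ (\epsilon \otimes \id_{S \otimes S'})$; substituting this into $\rho$ and applying the counit axiom for $\tau$ collapses $\tau \otimes \id_{S'}$ to the identity, leaving $e^\sdag \circ e = \id$.

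For the comodule law I would expand both sides and isolate a single residual claim. Pushing $\delta$ past $e^\sdag$ (they act on disjoint factors) and then invoking the comodule law for $\tau$ rewrites $(\delta \otimes \id)\circ\rho$ as $(\id_{E\otimes E}\otimes e^\sdag)\circ(\id_E\otimes\tau\otimes\id_{S'})\circ(\tau\otimes\id_{S'})\circ e$. Expanding $(\id_E\otimes\rho)\circ\rho$ yields the \emph{same} expression except with the factor $\id_E\otimes(e\circ e^\sdag)$ — the orthogonal projector onto $S\otimes_E S'$, since $e$ is an isometry — inserted between the two copies of $\tau$. Thus the whole axiom reduces to showing this projector can be deleted, i.e. that $(\tau\otimes\id_{S'})\circ e$ already factors through $E\otimes(S\otimes_E S')$.

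This last claim — that $S\otimes_E S'$ is a \emph{subcomodule} of $(S\otimes S',\,\tau\otimes\id_{S'})$ — is the main obstacle, and it is exactly the standard fact that a cotensor product of comodules over a cocommutative comonoid is again a comodule. To prove it I would take a consistent state, apply $\tau\otimes\id_{S'}$, and test whether the result is, in each $E$-slot, still consistent in the sense of~\eqref{eq:consistencyproperty}: the comodule (coassociativity) law for $\tau$ handles one side, while the commutativity of the two interactions (\eqref{eq:commutinginteraction}, here automatic since $\tau$ and $\tau'$ act on the disjoint systems $S$ and $S'$) together with the cocommutativity of $E$ recorded in~\eqref{eq:graphicalcomonoid}, plus the original consistency relation, handle the other; the two sides then coincide. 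As the paper observes, this argument is word-for-word the usual one in module theory, so I would state it briefly and defer to standard comodule theory rather than grinding through the full diagram chase.
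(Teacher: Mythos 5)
Your proof is correct and follows exactly the route the paper intends: the paper explicitly omits proofs of all lemmas in this appendix on the grounds that they are ``identical to proofs in the standard theory of modules'', and your verification is precisely that standard cotensor-product-of-comodules computation, with the counit law reduced to $e^\sdag \circ e = \id$ and the coassociativity law reduced to deleting the inserted projector $\id_E \otimes e e^\sdag$. In particular you correctly isolate the one substantive point --- that $(\tau \otimes \id_{S'}) \circ e$ factors through $E \otimes (S \otimes_E S')$, i.e.\ that the consistent subspace is a subcomodule --- and correctly identify coassociativity of $\tau$, the consistency relation~\eqref{eq:consistencyproperty}, the automatic commutation of interactions on disjoint tensor factors, and cocommutativity of the comonoid structure on $E$ from~\eqref{eq:comm} as exactly the ingredients needed for that step.
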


\ignore{
Above, we described how the environment can itself be treated as a system in our framework. We can now show that copies of this environmental system can be freely produced. Suppose that we have a system $S$ equipped with environmental interaction $\tau \colon S \to E \otimes S$, and the environment $E$ treated as a system with environmental interaction $\delta \colon E \to E \otimes E$. Then we have two systems interacting with the same environment, and so the tensor product $S \otimes _E E$ of these systems with respect to the environment can be constructed. We now consider the following lemma:
\begin{lemma}
Given interaction maps $\tau:S \to E \otimes S$ and $\delta : E \to E \otimes E$, the tensor product with respect to the environment $S \otimes_E E$ is isomorphic as a Hilbert space to $S$.
\end{lemma}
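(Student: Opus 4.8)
The plan is to reduce the statement to the comodule analogue of the elementary module isomorphism $M\otimes_R R\cong M$, made concrete using the structure of the classical data type $E$. First I would invoke the fact for classical structures~\cite{cpv08-dfb} that $\delta$ and $\epsilon$ copy and delete the elements of an orthonormal basis $\ket i$ of $E$, so $\delta\ket i=\ket i\otimes\ket i$ and $\epsilon\ket i=1$. Expanding a general interaction map in this basis as $\tau(s)=\sum_i\ket i\otimes\tau_i(s)$ for linear maps $\tau_i\colon S\to S$, I would unfold the two interaction-map axioms~\eqref{eq:comod1} and~\eqref{eq:comod2}. Coassociativity forces $\tau_j\tau_i=\delta_{ij}\tau_i$, and the counit axiom forces $\sum_i\tau_i=\id_S$; hence the $\tau_i$ form a complete family of orthogonal idempotents and $S=\bigoplus_i S_i$ with $S_i:=\mathrm{im}(\tau_i)$. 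This is exactly the translation ``comodules $=$ graded spaces'' that underlies the module-theoretic analogy noted for this section.

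With the decomposition in hand I would compute the equalizer~\eqref{eq:equalizer} directly in the case $S'=E$, $\tau'=\delta$. Expanding an arbitrary $\psi\in S\otimes E$ as $\psi=\sum_i\psi_i\otimes\ket i$, the map $\tau\otimes\id_E$ sends $\psi$ to $\sum_{i,j}\ket j\otimes\tau_j(\psi_i)\otimes\ket i$, while the other leg of~\eqref{eq:equalizer} (which applies $\delta$ to the $E$-factor and routes the new environment strand to the front via $\text{swap}$) sends $\psi$ to $\sum_i\ket i\otimes\psi_i\otimes\ket i$. Comparing coefficients gives the single family of conditions $\tau_j(\psi_i)=\delta_{ij}\psi_i$; the diagonal equations say $\psi_i\in S_i$, and the off-diagonal ones then hold automatically because $\tau_j\tau_i=0$ for $i\neq j$. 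Thus $S\otimes_E E=\{\sum_i\psi_i\otimes\ket i : \psi_i\in S_i\}$, and the map $\Phi\colon S\to S\otimes_E E$ given by $\Phi(s)=\sum_i\tau_i(s)\otimes\ket i$ is a linear bijection: it lands in the equalizer by idempotency, is injective since $\sum_i\tau_i=\id_S$ recovers $s$, and is surjective since $\sum_i\psi_i\otimes\ket i=\Phi(\sum_i\psi_i)$.

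The one genuine subtlety — and the step I expect to be the main obstacle — is that the interaction axioms do not force the idempotents $\tau_i$ to be self-adjoint, so the decomposition $S=\bigoplus_i S_i$ need not be orthogonal and $\Phi$ need not be unitary; moreover the inner product that $S\otimes_E E$ inherits through the isometric embedding $e$ makes it isometric to the \emph{orthogonal} external sum of the $S_i$, rather than to $S$ with its given inner product. I would close this gap by a dimension count: $\dim(S\otimes_E E)=\sum_i\dim S_i=\dim S$, and since all spaces here are finite-dimensional, equality of dimension already yields a Hilbert-space (unitary) isomorphism, which is all the statement claims. Should a canonical isometry be wanted instead, one would only need the additional hypothesis that $\tau$ be a coisometry, forcing the $\tau_i$ to be genuine orthogonal projections.
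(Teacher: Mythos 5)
Your proof is correct, and it is in essence the argument the paper intends but never writes down: the appendix explicitly omits all proofs, remarking that they are ``identical to proofs in the standard theory of modules,'' and this lemma is the comodule analogue of $M \otimes_R R \cong M$ (cotensoring a comodule with the coalgebra itself), which is exactly what you carry out. Your route makes that standard argument concrete: the classical-structure axioms supply the orthonormal basis $\ket i$, the comodule axioms turn $\tau$ into a complete family of mutually annihilating idempotents $\tau_i$ with $\sum_i \tau_i = \id_S$, and the equalizer~\eqref{eq:equalizer} with $S'=E$, $\tau'=\delta$ correctly reduces to the diagonal conditions $\psi_i \in \mathrm{im}(\tau_i)$, so $\Phi(s)=\sum_i \tau_i(s)\otimes\ket i$ is a linear bijection. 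The subtlety you flag is genuine and is glossed over by the paper's one-line remark: nothing in the interaction-map axioms forces the $\tau_i$ to be self-adjoint (for instance $\tau_1 = \left(\begin{smallmatrix}1&1\\0&0\end{smallmatrix}\right)$, $\tau_2 = \id - \tau_1$ on $\C^2$ satisfies all of them), so $\Phi$ is canonical but generally not unitary, while the equalizer carries the \emph{orthogonal} external sum inner product; your finite-dimensional dimension count is the right way to secure the Hilbert-space isomorphism the statement actually asserts. One small correction to your closing aside: the hypothesis that upgrades $\Phi$ to a unitary is that $\tau$ be an \emph{isometry}, $\tau^\dag \circ \tau = \id_S$, which gives $\sum_i \tau_i^\dag \tau_i = \id_S$ and hence forces each nonzero idempotent $\tau_i$ to have norm $1$ and so be an orthogonal projection; a \emph{coisometry} $S \to E \otimes S$ cannot exist at all when $\dim E > 1$, for dimension reasons. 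Since that remark lies outside the statement being proved, it does not affect the validity of your proof.
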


\noindent
Therefore the presence of this extra copy of the environment results in an equivalent physical system to $S$ itself. Conversely, any number of copies of the environment can be considered as being simultaneously present. This fits very well with the physical intuition driving our model, and is crucial for our 2\-categorical framework to be introduced in Section~\ref{sec:2cat}.
}

\subsection{Protected dynamics}
\label{sec:protecteddynamics}

\noindent
The final layer of structure to add to our model is a notion of dynamics for our quantum systems. Since our systems are interacting with their environment in an uncontrollable fashion, the dynamics of our system will only be well-defined if it commutes with the interaction maps in an appropriate fashion.

Suppose $f:S \to S'$ is the linear map representing dynamical evolution. We assume that our dynamics is \textit{local}, and so $S$ and $S'$ interact with the same family of environments. We say that $f$ is \emph{protected from decoherence}, or simply \emph{protected}, when the following equation holds:
\begin{equation}
\label{eq:protected}
\begin{aligned}
\begin{tikzpicture}[yscale=\diagscale]
\node [comod,minimum height=12pt] (c) at (0,0) {\raisebox{-7.4pt}{\smash{$\tau'$}}};
\draw [env] (c.145) to +(0,15pt) node (A) [above] {$E$};
\draw (c.325) to +(0,-15pt) node [circle, draw, inner sep=1pt, fill=white] {\small $f$} to +(0,-30pt) node (B) [below] {$S$};
\draw (c.35) to +(0,15pt) node [above] {$S'$};
\end{tikzpicture}
\end{aligned}
\hspace{8pt}
=
\hspace{5pt}
\begin{aligned}
\begin{tikzpicture}[yscale=\diagscale]
\node [comod, minimum height=12pt] (c) at (0,0) {$\tau$};
\draw [env]  (c.145) to +(0,30pt) node (A) [above] {$E$};
\draw (c.325) to +(0,-15pt) node (B) [below] {$S$};
\draw (c.35) to +(0,15pt) node [circle, draw, inner sep=1pt, fill=white] {\small $f$} to +(0,30pt) node [above] {$S'$};
\end{tikzpicture}
\end{aligned}
\end{equation}
We require this to hold for each environment $E$ with which the systems $S$ and $S'$ interact.

For a given system, precisely which dynamics are protected from decoherence will depend on the environmental interaction maps themselves. However, it is possible to set these up in such a way that \textit{any} evolution of the system will be protected.

One way is to set the environment to be the trivial classical data type $(\C,1,1)$, and the interaction map as the canonical isomorphism of vector spaces $\tau\colon S \to \C \otimes S$. A more realistic way to ensure arbitrary dynamics are `protected' is to maintain a nontrivial environmental classical data type $(E,\delta,\epsilon)$, but to choose the `system' Hilbert space to be $B \otimes S$, where $B$ is a `buffer' system which interacts directly with the environment and `protects' our true system $S$ of interest. In this scenario, we require $B$ to be equipped with its own interaction map $\tau _{B}\colon B \to E \otimes B$. The interaction map $\tau_{B\otimes S}\colon (B \otimes S) \to E \otimes (B \otimes S)$ is then defined as the tensor product of $\tau _B$ with the identity on $S$:
\begin{equation}
\label{eq:tensorinteraction}
\begin{aligned}
\begin{tikzpicture}[yscale=\diagscale]
\node [comod] (f) at (0,0) {$\tau _B$};
\draw  [env] (f.150) to +(0,15pt) node (A) [above] {$E$};
\draw (f.330) to +(0,-15pt) node (B) [below] {$B$};
\draw (f.30) to +(0,15pt) node (C) [above] {$B$};
\draw ([xshift=20pt] B.north)
    node [below] {$S$}
    to ([xshift=20pt] C.south)
    node [above] {$S$};
\end{tikzpicture}
\end{aligned}
\end{equation}
The following lemma establishes that this construction is valid.
\begin{lemma}
\label{lem:compoundinteraction}
For an interaction map \mbox{$\tau_B: B \to E \otimes B$}, then $\tau_B \otimes \id_S : B \otimes S \to E \otimes (B \otimes S)$ is also an interaction map.
\end{lemma}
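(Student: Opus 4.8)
The plan is to verify directly that $\tilde\tau := \tau_B \otimes \id_S$ satisfies the two defining conditions of an interaction map for the classical data type $(E,\delta,\epsilon)$ acting on the Hilbert space $B \otimes S$, namely the comodule axiom~\eqref{eq:comod1} and the counit axiom~\eqref{eq:comod2}. The entire content of the argument is that the $S$-strand plays no active role: graphically it runs straight up the right-hand side while every vertex ($\tau_B$, $\delta$, $\epsilon$) acts only on the $B$- and $E$-strands, so each axiom for $\tilde\tau$ collapses, via the interchange law for the tensor product, onto the corresponding axiom for $\tau_B$, which holds by hypothesis.

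First I would treat axiom~\eqref{eq:comod1}. Applying $\tilde\tau$ twice and factoring out the idle right-hand strand gives
\[
(\id_E \otimes \tilde\tau) \circ \tilde\tau
= (\id_E \otimes \tau_B \otimes \id_S) \circ (\tau_B \otimes \id_S)
= \big((\id_E \otimes \tau_B) \circ \tau_B\big) \otimes \id_S,
\]
using that tensoring with $\id_S$ commutes with composition. By axiom~\eqref{eq:comod1} for $\tau_B$ the bracketed map equals $(\delta \otimes \id_B) \circ \tau_B$, so
\[
(\id_E \otimes \tilde\tau) \circ \tilde\tau
= \big((\delta \otimes \id_B) \circ \tau_B\big) \otimes \id_S
= (\delta \otimes \id_{B \otimes S}) \circ \tilde\tau,
\]
which is exactly axiom~\eqref{eq:comod1} for $\tilde\tau$.

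Next I would verify the counit axiom~\eqref{eq:comod2} in the same manner:
\[
(\epsilon \otimes \id_{B \otimes S}) \circ \tilde\tau
= \big((\epsilon \otimes \id_B) \circ \tau_B\big) \otimes \id_S
= \id_B \otimes \id_S
= \id_{B \otimes S},
\]
where the middle equality is axiom~\eqref{eq:comod2} for $\tau_B$. Together the two verifications establish that $\tilde\tau$ is an interaction map, completing the proof.

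There is essentially no obstacle here, in keeping with the paper's remark that the lemmas of this section are identical to standard facts about modules. The only point needing care is the bookkeeping of tensor factors: checking that $\delta$ and $\epsilon$ act on the correct $E$-strands and that the $\id_S$ strand can be slid past every vertex. This is precisely the interchange law, and graphically it amounts to the observation that the diagrams for $\tilde\tau$ are those for $\tau_B$ with a straight vertical $S$-line drawn to the right; the abstract statement is simply that a comodule coaction tensored with an identity is again a comodule coaction.
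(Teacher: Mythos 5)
Your proof is correct, and it is precisely the ``straightforward, identical to standard (co)module theory'' argument the paper alludes to when it declares the proofs of the lemmas in this appendix omitted: a direct check of axioms~\eqref{eq:comod1} and~\eqref{eq:comod2}, with the interchange law $(\id_E \otimes \tau_B \otimes \id_S)\circ(\tau_B \otimes \id_S) = \big((\id_E \otimes \tau_B)\circ\tau_B\big)\otimes\id_S$ doing all the work. Nothing is missing; the bookkeeping of tensor factors is handled exactly as it should be.
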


\noindent
We now see that this achieves our goal.

\begin{lemma}
\label{lem:protectedlemma}
For any linear map $f:S \to S'$, the composite $\id_B \otimes f:B \otimes S \to B \otimes S'$ is a protected linear map, with respect to the interaction maps described in Lemma~\ref{lem:compoundinteraction}.
\end{lemma}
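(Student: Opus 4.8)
The plan is to verify the defining equation~\eqref{eq:protected} for protected dynamics directly, taking the dynamical map to be $g := \id_B \otimes f : B \otimes S \to B \otimes S'$, and taking the interaction maps on both $B \otimes S$ and $B \otimes S'$ to be those supplied by Lemma~\ref{lem:compoundinteraction}, namely $\tau_{B\otimes S} = \tau_B \otimes \id_S$ and $\tau_{B\otimes S'} = \tau_B \otimes \id_{S'}$. Since these are genuine interaction maps by that lemma, the only thing left to check for the present statement is the commutation condition~\eqref{eq:protected} itself.

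First I would write out the two sides of~\eqref{eq:protected} for this choice of data. The left-hand side ($\tau'$ applied after $g$) is $(\tau_B \otimes \id_{S'}) \circ (\id_B \otimes f)$, and the right-hand side ($g$ applied after $\tau$ on the system leg) is $(\id_{E \otimes B} \otimes f) \circ (\tau_B \otimes \id_S)$, regarded as maps $B \otimes S \to E \otimes B \otimes S'$. The key step is then the observation that $f$ acts only on the $S$-factor while $\tau_B$ acts only on the $B$-factor, so the two maps occupy disjoint tensor legs and commute. Formally this is just the interchange (bifunctoriality) law for $\otimes$: applying it to each side gives $(\tau_B \otimes \id_{S'}) \circ (\id_B \otimes f) = \tau_B \otimes f$ and $(\id_{E\otimes B} \otimes f) \circ (\tau_B \otimes \id_S) = \tau_B \otimes f$, with the target regrouped as $E \otimes (B \otimes S')$. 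Both composites therefore reduce to the single morphism $\tau_B \otimes f$, and the equation holds for every environment $E$ with which the systems interact. Graphically, the entire content is that the vertex $f$ sitting on the $S$-strand may be slid freely past the vertex $\tau_B$ on the parallel, non-interacting $B$-strand.

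I do not expect a genuine obstacle here; the result is a pure consequence of the interchange law, consistent with the remark in the excerpt that the proofs in this section are all straightforward. The only point requiring care is the bookkeeping of associativity and the regrouping of the factors in $E \otimes B \otimes S'$, so that both composites are manifestly seen to land on the same morphism $\tau_B \otimes f$. No re-verification of the comodule axioms~(\ref{eq:comod1},\ref{eq:comod2}) is needed, as those are already guaranteed for $\tau_B \otimes \id_S$ by Lemma~\ref{lem:compoundinteraction}.
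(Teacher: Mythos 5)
Your proof is correct, and it is exactly the straightforward argument the paper has in mind: the paper explicitly omits proofs of the lemmas in this appendix as routine, and the intended verification is precisely your reduction of both sides of~\eqref{eq:protected} to $\tau_B \otimes f$ via the interchange law for $\otimes$, with the interaction-map axioms already supplied by Lemma~\ref{lem:compoundinteraction}. No gap.
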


\noindent
This construction is a good model for a quantum system $S$ placed inside a box $B$, which isolates it completely: while $B$ interacts with the environment, $S$ does not, and quantum evolution can take place without any information about $S$ being transmitted to the environment.

\subsection{Controlled operations}
\label{sec:2controlledoperations}

\noindent
We saw in Lemma~\ref{lem:copyisinteraction} that the environment can itself be viewed as a system equipped with an interaction map. Using construction~\eqref{eq:tensorinteraction}, we may therefore treat $E \otimes S$ itself as a system,  for an arbitrary Hilbert space $S$, with an interaction map that copies the data stored in $E$. As we saw in Lemma~\ref{lem:protectedlemma}, any linear map $\id_E \otimes f$ for $f\colon S \to S$ is protected with respect to this interaction.

However, our quantum system as a whole is now $E \otimes S$ in this new setting, and it is interesting to ask what the protected evolutions are of this composite system. Applying definition~\eqref{eq:protected}, they are maps $f\colon E \otimes S \to E \otimes S$ satisfying the following equation:
\begin{equation}
\begin{aligned}
\begin{tikzpicture}[yscale=\diagscale]
\node [comod] (f) at (0,0) {$f$};
\node (P) at ([xshift=-9pt, yshift=15pt] f.145) {};
\node (A) at ([xshift=-18pt, yshift=30pt] f.145) [above] {$E$};
\node (B) at ([xshift= 0pt , yshift=30pt] f.145) [above] {$E$};
\node (C) at ([yshift=30pt] f.35) [above] {$S$};
\draw (f.35)
    to (C.south);
\draw (f.145)
    to [out=up, in=down] (P.center)
    to [out=right, in=down] (B.south);
\draw (f.-35)
    to +(0,-15pt)
        node [below] {$S$};
\draw (f.-145)
    to +(0,-15pt)
        node [below] {$E$};
\draw [env] (P.center)
    to [out=left, in=down] (A.south);
\end{tikzpicture}
\end{aligned}
\hspace{5pt}
=
\begin{aligned}
\begin{tikzpicture}[yscale=\diagscale]
\node [comod] (f) at (0,0) {$f$};
\node (P) at ([xshift=-9pt, yshift=-15pt] f.-145) {};
\node (A) at ([xshift=-18pt, yshift=15pt] f.145) [above] {$E$};
\node (B) at ([xshift= 0pt , yshift=15pt] f.145) [above] {$E$};
\node (C) at ([yshift=15pt] f.35) [above] {$S$};
\draw (f.35)
    to (C.south);
\draw (f.-145)
    to [out=down, in=right] (P.center);
\draw [env] (P.center)
    to [out=left, in=down] ([xshift=-18pt] f.-145)
    to (A.south);
\draw (f.-35)
    to +(0,-30pt)
        node [below] {$S$};
\draw (f.145)
    to +(0,+15pt);
\draw (P.center)
    to [out=down, in=up]
        ([yshift=-30pt,xshift=-9pt] f.-145) node [below] {$E$};
\end{tikzpicture}
\end{aligned}
\end{equation}
For concreteness, we now suppose that the copying map associated to  $E$ acts by copying a basis of vectors $\ket{i} \in E$, where $i \in \{1, \ldots, \dim(E)\}$ acts as an index. Then
\begin{align*}
\delta &\colon \ket{i} \mapsto \ket{i} \otimes \ket {i}
\\
\epsilon &\colon \ket{i} \mapsto 1,
\end{align*}
and we can demonstrate that $f$ is precisely a series of \textit{controlled operations}: operations on $S$ which depend on the value stored in $E$.
\begin{lemma}
For a system with Hilbert space $E \otimes S$, classical data type $(E,\delta,\epsilon)$ as above, and interaction map \mbox{$\delta \otimes \id_S: E \otimes S \to E \otimes E \otimes S$}, the protected maps \mbox{$f: E \otimes  S \to E \otimes S$} have the following form:
\begin{equation}
f \colon \ket{i} \otimes \ket{\sigma} \mapsto \ket{i} \otimes f_i(\ket{\sigma})
\end{equation}
where $\ket{\sigma}$ is an arbitrary state of $S$, and  $f_i \colon S \to S$ is an indexed family of linear maps on $S$.
\end{lemma}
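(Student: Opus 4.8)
The plan is to convert the graphical protected condition \eqref{eq:protected} into a concrete algebraic equation and then read off the action of $f$ in the copyable basis $\ket{i}$ of $E$. Here the r\^ole of the ``system'' is played by $E \otimes S$, and both interaction maps are $\delta \otimes \id_S \colon E \otimes S \to E \otimes (E \otimes S)$, so the protected condition becomes
\[
(\delta \otimes \id_S) \circ f = (\id_E \otimes f) \circ (\delta \otimes \id_S)
\]
as maps $E \otimes S \to E \otimes E \otimes S$. My first step is to expand an arbitrary $f$ in block form relative to the basis, writing $f(\ket{i} \otimes \ket{\sigma}) = \sum_j \ket{j} \otimes f_{ji}(\ket{\sigma})$ for an indexed family of linear maps $f_{ji} \colon S \to S$.

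Next I would substitute this decomposition into both sides, using the classical-structure fact (recorded in \eqref{eq:graphicalfrobenius} and the ensuing displays) that $\delta$ copies the basis, $\delta\ket{j} = \ket{j} \otimes \ket{j}$. The left-hand side becomes $\sum_j \ket{j} \otimes \ket{j} \otimes f_{ji}(\ket{\sigma})$, which is supported on the diagonal vectors $\ket{k}\otimes\ket{k}$ of $E \otimes E$; the right-hand side becomes $\sum_j \ket{i} \otimes \ket{j} \otimes f_{ji}(\ket{\sigma})$, whose first tensor leg is pinned to $\ket{i}$.

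The key step is then to equate coefficients of each basis vector $\ket{k} \otimes \ket{l}$ of $E \otimes E$. Matching gives $\delta_{k,l}\, f_{ki}(\ket{\sigma}) = \delta_{k,i}\, f_{li}(\ket{\sigma})$ for all $k,l$ and all $\ket{\sigma}$: the diagonal support on the left forces $f_{ki}$ to vanish whenever $k \neq i$, while the off-diagonal terms on the right are consistent with this. I expect this coefficient-matching to be the crux, though it is short: it yields $f_{ji} = \delta_{i,j}\, f_{ii}$, so setting $f_i := f_{ii}$ gives exactly $f(\ket{i} \otimes \ket{\sigma}) = \ket{i} \otimes f_i(\ket{\sigma})$. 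Finally I would observe that the converse is immediate --- any $f$ of this block-diagonal form manifestly satisfies the displayed equation --- so these are precisely the protected maps. The only mild subtlety, rather than a genuine obstacle, is bookkeeping the two $E$ legs correctly when translating the string diagram \eqref{eq:protected} into the algebraic identity.
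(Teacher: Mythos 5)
Your proposal is correct and is exactly the argument the paper intends: the paper explicitly omits proofs of the appendix lemmas as ``straightforward'' module-theoretic facts, and your translation of the protected condition~\eqref{eq:protected} into $(\delta \otimes \id_S) \circ f = (\id_E \otimes f) \circ (\delta \otimes \id_S)$, followed by block-decomposing $f$ in the copyable basis and matching coefficients of $\ket{k}\otimes\ket{l}$, is the standard proof that comodule endomorphisms of the free comodule $E \otimes S$ are block-diagonal. The only nitpick is expository: it is the RHS being pinned to $\ket{i}$ in its first leg that kills the diagonal terms $f_{ki}$ with $k \neq i$ (and the diagonal support of the LHS that kills the off-diagonal $f_{li}$), but your displayed matching equation $\delta_{k,l}\, f_{ki} = \delta_{k,i}\, f_{li}$ is correct and yields $f_{ji} = \delta_{i,j} f_{ii}$ either way, with the trivial converse duly noted.
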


\bibliography{../../../jov}

\end{document}